\documentclass[journal]{IEEEtran}
\usepackage[draft=false, colorlinks=true, allcolors = blue]{hyperref}
\usepackage[caption=false,font=footnotesize]{subfig}
\usepackage{amsmath,amsfonts}
\usepackage[ruled,vlined]{algorithm2e}
\usepackage{array}
\usepackage{textcomp}
\usepackage{url}
\usepackage{verbatim}
\usepackage{graphicx}
\usepackage{cite}
\usepackage{amssymb}

\usepackage{graphicx}

\usepackage{color}
\usepackage{bm}

\usepackage{amsthm}
\usepackage{amsmath}
\newtheorem{proposition}{Proposition}[section]
\newtheorem{lemma}{Lemma}[section]
\newtheorem{theorem}{Theorem}[section]
\newtheorem{fact}{Fact}[section]

\theoremstyle{definition}
\newtheorem{definition}{Definition}[section]
\newtheorem{remark}{Remark}[section]
\newtheorem{assumption}{Assumption}[section]
\newtheorem{example}{Example}[section]

\newcommand{\diagvec}[1]{{\mathrm{diag}(\boldsymbol{#1})}}
\newcommand{\signal}[1]{{\boldsymbol{#1}}}

\newcommand{\real}{{\mathbb R}}

\newcommand{\Natural}{{\mathbb N}}
\newcommand{\refeq}[1]{(\ref{#1})}

\newcommand{\fnorm}{{F_{\|\cdot\|}}}

\newcommand{\spradG}{{\varrho_{G}}}
\theoremstyle{remark}
\usepackage{setspace}
\usepackage{hyperref}
\DeclareMathOperator*{\minimize}{minimize}

\DeclareMathOperator*{\argmin}{argmin}
\DeclareMathOperator*{\argmax}{argmax}

\begin{document}
	
	\title{Sum-Rate Maximization via Convex Optimization Using Subgradient Projections Onto Nonlinear Spectral Radius Constraint Sets}
	
	\author{Hiroki Kuroda,~\IEEEmembership{Member,~IEEE,}~and~Renato L.~G.~Cavalcante,~\IEEEmembership{Member,~IEEE}%
	\thanks{The work of the first author was supported in part by the Japan Society for the Promotion of Science under Grants-in-Aid 21K17827. The second author acknowledges the 6G-MIRAI-Harmony project, which has received funding from the Smart Networks and Services Joint Undertaking (SNS JU) under the European Union's Horizon Europe research and innovation programme, Grant Agreement No. 10119236. Views and opinions expressed are, however, those of the authors only and do not necessarily reflect those of the European Union or the SNS JU (granting authority). Neither the European Union nor the granting authority can be held responsible for them. {\it (Corresponding author: Hiroki Kuroda.)}}%
	\thanks{Hiroki Kuroda is with Nagaoka University of Technology, Niigata 940-2188, Japan (e-mail: kuroda.h@ieee.org). He conducted part of this work as a guest researcher at Fraunhofer Heinrich Hertz Institute.}%
	\thanks{Renato L.~G.~Cavalcante is with the Fraunhofer Heinrich Hertz Institute, 10587 Berlin, Germany (e-mail: renato.cavalcante@hhi.fraunhofer.de)}%
	}
	
	\maketitle

	\begin{abstract}
    We solve the (weighted) sum-rate maximization problem over the set of achievable rates characterized by a nonlinear spectral radius function. This set has been recently shown to be convex in some practically relevant settings in modern wireless networks, including cell-less networks. However, even under convexity, sum-rate maximization is challenging because the nonlinear spectral radius characterization of the achievable rate region is difficult to handle directly. We overcome this difficulty by exploiting subgradient projections onto the level sets of suitably reformulated spectral radius functions. Notably, the derived subgradient projection algorithm provably converges to the global optimum of the sum-rate maximization problem under the convexity condition. The efficacy of the proposed algorithm is illustrated in simulations for cell-less networks.
	\end{abstract}
	 
	\begin{IEEEkeywords}
		Constrained convex optimization, subgradient projection, nonlinear spectral radius, sum-rate maximization.
	\end{IEEEkeywords}

\section{Introduction}
\label{sect.intro}
Weighted sum-rate maximization has a long-standing history in the wireless communications literature, with applications spanning cross-layer control, link scheduling, and power control, to name a few (see \cite{weeraddana2012} for an overview). Early studies on sum-rate maximization \cite{luo2008dynamic,weeraddana2012,liu2012achieving,zheng14,tan2011nonnegative,friedland2008maximizing,chiang07,tan2013,shi2011iteratively} typically considered instantaneous rates defined by individual channel realizations. More recently, driven in large part by advances in the massive MIMO and cell-less literature, the focus has shifted toward maximizing tractable bounds on Shannon achievable (ergodic) rates, most notably the use-and-then-forget (UatF) bound \cite{marzetta16,bjornson2017massive,miretti2024sum,miretti2025two}. In these formulations, the optimization problems are solved once per channel distribution \cite{miretti2024sum,bjornson2017massive,marzetta16,miretti2025two}, rather than once per channel realization as in the earlier approaches. This reduced dependence on instantaneous channel state information is especially valuable in distributed deployments (e.g., cell-less networks), where a per-realization resource-allocation strategy would entail acquiring channel state information from many geographically separated nodes and repeatedly solving an optimization problem on the channel-coherence time scale -- an overhead often prohibitive in practice.

From an abstract mathematical viewpoint, formulations based on instantaneous rates and those based on trackable bounds on ergodic rates are closely connected. In either case, the resulting weighted sum-rate maximization problems are (strongly) NP-hard in general \cite{luo2008dynamic}, which forces a tradeoff between provably optimal algorithms that are typically slow and fast heuristics that offer no optimality guarantees \cite{weeraddana2012}. 
Representative methods in the former category include branch-and-bound schemes \cite{xu2008global} and the polyblock algorithm \cite{liu2012achieving}.  Unfortunately, the complexity of global approaches becomes prohibitive even for modest problem sizes. For instance, the polyblock algorithm is reported to be impractical beyond roughly six users \cite[Sect.~VI]{liu2012achieving}.
For this reason, the literature largely relies on fast heuristics, with proposed techniques spanning convex relaxations \cite[Sect.~IV]{zheng14},\cite[Sect.~4]{tan2011nonnegative}, \cite{friedland2008maximizing}, geometric programming formulations \cite{chiang07,tan2013}, and the weighted minimum mean-square error (WMMSE) method \cite{shi2011iteratively}, among others.

Despite the NP-hardness of sum-rate maximization in general, some special cases can be cast as convex optimization problems. Notable examples include the low transmit power regime \cite[Sect.~V]{tan2011maximizing},\cite[Sect.~III]{zheng14} and settings in which users experience equal interference-plus-noise \cite[Sect.~V.A]{cheng2017optimal}. More recently, the study in \cite{renatoconvex} has lifted some of these assumptions by focusing on interference models based on the UatF bound. By using the spectral radius of nonlinear mappings, that study identifies practical conditions in which the set of achievable rates becomes convex. However, even when convexity holds, designing efficient algorithms tailored to the resulting formulations have remained elusive, largely owing to the intricate structure of the achievable rate region, as detailed below.

\subsection{Potentially tractable classes of the sum-rate maximization problems and their associated challenges}
\label{sect.intro.reformulation}
Typically, weighted sum-rate maximization -- whether based on instantaneous signal-to-interference-plus-noise ratio (SINR) or the UatF bound -- can be written in the following unified form (see Sect.~\ref{sect.notation} for mathematical definitions):

\begin{align}
	\label{eq.sumrate.power}\tag{P}
		\minimize_{\signal{p}\in\mathcal{P}}~  {} -\sum_{n \in \mathcal{N}}w_n\log\left(1 + \frac{p_n}{f_n(\signal{p})}\right),
\end{align}
where $\mathcal{P}$ is the power constraint set given by
\begin{align}
	\label{eq.C}
	\mathcal{P} := \{\signal{p}\in\real_+^N\mid \|\signal{p}\|\le 1\}
\end{align}
for a given \emph{order-preserving} norm $\|\cdot\|$,
$\mathcal{N} := \{1,\ldots,N\}$ denotes the set of $N \in \Natural$ users,
$w_n \in \real_{++}$ is a positive weight (priority) assigned to user $n \in \mathcal{N}$, and
$f_n\colon \real_+^N\to\real_{++}$ is a \emph{strictly-subhomogeneous order-preserving continuous (SSOC)} function --
called \emph{standard interference function} in the wireless literature \cite{yates95,martin11} --
for every $n \in \mathcal{N}$.

Problem \eqref{eq.sumrate.power} can be
equivalently cast as
\begin{align}
	\label{eq.sumrate.sinr}\tag{S}
		\minimize_{\signal{s}\in\mathcal{S}}~  {} - \sum_{n \in \mathcal{N}}w_n\log(1 + s_n),
\end{align}
where the \emph{achievable SINR region} $\mathcal{S}$, i.e., the set
of achievable SINRs of users, is given by
\begin{align*}
\nonumber
\mathcal{S} := {} & \left\{\left. \left(\frac{p_n}{f_n(\signal{p})}\right)_{n \in \mathcal{N}} \in \mathbb{R}_{+}^{N} 
\,\right|\, \signal{p} \in \mathcal{P} \right\}.
\end{align*}
Under a fairly general assumption (see Sect.~\ref{sect.general.sumrateprob} for details),
by constructing a \emph{positively-homogeneous order-preserving continuous (PHOC)} mapping $G\colon \real_{+}^N\to\real_{+}^N$ from
$f_n~(n \in \mathcal{N})$ and $\|\cdot\|$
in \eqref{eq.C},
we can rewrite the set $\mathcal{S}$ in the following equivalent form \cite{renatoconvex}: 
\begin{align}
\label{eq.def.regionsinr}
\mathcal{S} = \{\signal{s}\in\real_{+}^N\mid \spradG (\bm{s}) \le 1\},
\end{align}
where, for each $\signal{s}\in\real_+^N$, $\spradG (\bm{s}) \in \real_{+}$ is
the \emph{spectral radius} (see Definition \ref{def.nl_radius})
of the (possibly nonlinear) PHOC mapping defined by
$$
\real_{+}^{N}\to\real_{+}^{N}\colon \signal{p}\mapsto \diagvec{s}(G(\bm{p})).
$$
Furthermore, by the nonlinear bijection
$$
\real_{+}^{N} \ni \signal{r} = (\log(1+s_n))_{n \in \mathcal{N}}\Leftrightarrow 
(e^{r_n}-1)_{n \in \mathcal{N}} = \signal{s} \in \real_{+}^{N},
$$
Problem \eqref{eq.sumrate.sinr} is equivalent to
\begin{align}
	\label{eq.sumrate.rate}\tag{R}
		\minimize_{\signal{r}\in\mathcal{R}}~ {}- \bm{w}^{\mathsf{T}}\signal{r},
\end{align}
where the \emph{achievable rate region} $\mathcal{R}$ is given by
\begin{align}
\label{eq.def.regionrate}
&\mathcal{R} := \{\signal{r}\in\real_{+}^N\mid 
(\spradG \circ \mathfrak{E})(\bm{r})  \leq 1 \},\\
\label{eq.def.VecExp}
	&\mathfrak{E} \colon \real_+^N\to\real_+^N\colon\bm{r}\mapsto (e^{r_n}-1)_{n \in \mathcal{N}},
\end{align}
and $\signal{w} := (w_n)_{n \in \mathcal{N}} \in \real_{++}^{N}$.
A practical sufficient condition for convexity of the nonlinear spectral radius function $\spradG\colon\real_{+}^{N}\to\real_{+}$ is revealed
in \cite{renatoconvex}, with a focus on PHOC mappings $G$ constructed from
UatF-based instances of $f_n~(n \in \mathcal{N})$ and \emph{polyhedral} order-preserving norms $\|\cdot\|$.
It is also shown that convexity of $\spradG$
implies convexity of $\spradG\circ\mathfrak{E}\colon\real_{+}^{N}\to\real_{+}$ \cite[Proposition 6]{renatoconvex}.
Both Problems \eqref{eq.sumrate.sinr} and \eqref{eq.sumrate.rate}
are guaranteed to be convex under the sufficient condition in \cite{renatoconvex}. However, in general, Problem \eqref{eq.sumrate.rate} is more likely to be convex
than Problem \eqref{eq.sumrate.sinr}
because $\spradG\circ\mathfrak{E}$ can be convex
even if $\spradG$ is nonconvex.
Hence, we focus on solving Problem \eqref{eq.sumrate.rate},
while Problem \eqref{eq.sumrate.sinr} is addressed in the Supplemental Material.
Note that, once a solution to Problem \eqref{eq.sumrate.rate} is obtained,
an efficient scheme for computing
a solution to Problem \eqref{eq.sumrate.power} is available
(see Appendix \ref{appendix.conversion} for details).

Even under the convexity condition, solving Problem \eqref{eq.sumrate.rate}
is challenging because the projection onto the constraint set $\mathcal{R}$ is difficult to compute.
In fact, this difficulty already arises even for the simplest linear PHOC mapping 
$G\colon\real_{+}^N\to\real_{+}^N\colon\signal{x}\mapsto\signal{M}\signal{x},$
where $\signal{M} \in \real_{+}^{N \times N}$ is a given nonnegative matrix.
In this special case, for every $\bm{s} \in \real_+^N$,
$\spradG(\bm{s})$ coincides with the largest absolute eigenvalue of $\diagvec{s}\signal{M} \in \real_{+}^{N \times N}$, i.e., the usual notion of the spectral radius of $\diagvec{s}\signal{M}$ in linear algebra.
However, typically $\diagvec{s}\signal{M}$ is \emph{nonsymmetric} for any given $\bm{s} \in \real_{+}^{N} \setminus \{ \signal{0}\}$, so 
computing the projection onto $\mathcal{R}$ is challenging. Indeed, projection formulas onto level sets of the spectral radius function are available only for \emph{symmetric} matrices, and the derivations rely crucially on the fact that every symmetric matrix admits an orthogonal diagonalization \cite[Corollary~24.65]{baus17}. In contrast, a nonsymmetric matrix need not be diagonalizable, so  explicit projection formulas are generally unavailable outside the symmetric setting. A more challenging situation arises in nonlinear settings, such as when projecting onto level sets of the nonlinear spectral radius function $\spradG$. This challenge is hard to overcome even with sophisticated splitting schemes such as those in \cite{combettes11,combettes21,condat23,chierchia14,chierchia15,wang16,kyochi21} (see also Sect.~\ref{sect.intro.related}).

\subsection{Main contributions}
The first objective of this study is to construct an efficient
algorithm that provably converges to the global optimal value of
Problem \eqref{eq.sumrate.rate}
-- and hence that of the original problem \eqref{eq.sumrate.power} --
under the assumption that $\spradG\circ\mathfrak{E}\colon\real_{+}^{N}\to\real_{+}$ is convex.
To this end, we exploit the \emph{subgradient projection} as a key ingredient to solve Problem \eqref{eq.sumrate.rate}.
More precisely, 
after introducing necessary mathematical notions in Sect.~\ref{sect.mathpre},
in Sect.~\ref{sect.reformulation}, we carefully reformulate Problem \eqref{eq.sumrate.rate} to enable the use of algorithmic frameworks based on subgradient projections.
One challenge to overcome is that existing frameworks rely on subgradient projection mappings relative to convex functions that are real-valued on vector spaces. As a result, these frameworks are not directly applicable to
Problem \eqref{eq.sumrate.rate} because the function $\spradG$ is well defined only on the nonnegative orthant $\real_{+}^{N}$ (see Definitions \ref{def.nl_radius} and \ref{def:spradG}).
We address this difficulty by carefully translating Problem \eqref{eq.sumrate.rate}
into an equivalent problem composed of functions from $\real^{N}$ to $\real$.
Then, in Sect.~\ref{sect.baseAlg},
we derive the proposed algorithm by applying the hybrid steepest descent (HSD) method \cite{yamada04b}
to the reformulated problem,
and then we prove its convergence to the global optimum of
Problem \eqref{eq.sumrate.rate} under the assumption of convexity.
In Sect.~\ref{sect.concAlg}, by focusing on typical models (Example \ref{ex.maxlinearG}) that appear in cell-less networks,
we show that the steps of the proposed algorithm can be efficiently computed at each iteration.

The second objective of this study is to illustrate the effectiveness of the proposed algorithm for UatF-based resource allocation in cell-less networks. In Sect.~\ref{sect.wireless.system}, we validate the models used to derive the algorithm and the associated theoretical results. In Sect.~\ref{sect.wireless.simu}, we verify conditions for convexity of Problem~\eqref{eq.sumrate.rate} and confirm our theoretical guarantee that the proposed algorithm converges to global optima. Moreover, the results indicate that the proposed algorithm achieves convergence speed comparable to that of the WMMSE method, a representative fast heuristic that may not converge to global optima even if Problem~\eqref{eq.sumrate.rate} is convex.

\subsection{Relation to existing studies on optimization}
\label{sect.intro.related}
\subsubsection*{Constrained convex optimization}
For convex optimization problems where the projections onto constraint sets are difficult to compute,
subgradient projections can be exploited as low-complexity alternatives.
Since subgradient projection mappings are \emph{quasi-nonexpansive},
a promising approach is to reformulate the problem of interest
as minimizing a differentiable convex function
over the fixed point set of a quasi-nonexpansive mapping (see Sect.~\ref{sect.pre.convex} for details). This reformulation results in a problem that can be efficiently solved with the HSD method\footnote{%
The HSD method was originally proposed for variational inequality problems over the fixed point sets of nonexpansive mappings in \cite{yamada01} and was subsequently generalized for quasi-nonexpansive mappings in \cite{yamada04b}.}.
Indeed, the HSD method has been successfully applied to
problems where only the subgradient projections onto constraint sets are computable: minimal antenna-subset selection \cite{yamada11} and image recovery under non-Gaussian noise \cite{ono15}.
While the algorithm we propose is also based on the HSD method,
the problems considered in \cite{yamada11,ono15}
differ substantially from
Problem \eqref{eq.sumrate.rate} (and also Problem \eqref{eq.sumrate.sinr}). As a result, significant new contributions are needed
to construct the proposed algorithm and to prove
its convergence to the global optimal value of Problem \eqref{eq.sumrate.rate}.

Generalized Haugazeau's method \cite{combettes00,combettes03} is designed
to minimize a strictly convex function over the fixed point set of a quasi-nonexpansive mapping.
Since the cost function in Problem \eqref{eq.sumrate.rate} is not strictly convex,
generalized Haugazeau's method is not applicable to Problem \eqref{eq.sumrate.rate}.
Although the cost function in Problem \eqref{eq.sumrate.sinr} is strictly convex, the subproblem that needs to be solved at each iteration of  generalized Haugazeau's method seems difficult to solve in our setting.

Proximal splitting methods \cite{combettes11,combettes21,condat23} are widely used for constrained and/or nonsmooth convex optimization problems
because of the ability to address complex problems by splitting them into tractable components 
\cite{gandy11,puste11,combettes20,kuroda22,condat22,kuroda25}.
However, these methods are not suitable for Problems \eqref{eq.sumrate.rate} and \eqref{eq.sumrate.sinr}
because computing the projections onto level sets of functions involving the nonlinear spectral radius function $\spradG$ is challenging (see Sect.~\ref{sect.intro.reformulation}),
and it is also difficult to decompose $\spradG$ into simpler functions
that can be handled by these methods.

Epigraphical projection techniques \cite{chierchia14,chierchia15,wang16,kyochi21}
are also used to deal with complicated constraint sets.
However, these techniques solve
relaxed problems that are not necessarily the same as the original problems.
In addition, these reformulations of Problems \eqref{eq.sumrate.rate} and \eqref{eq.sumrate.sinr} 
remain difficult to solve because computing the projection onto the epigraph of $\spradG$ is challenging.

\subsubsection*{Matrix eigenvalue optimization}
If we restrict our attention to linear PHOC mappings
$G\colon\real_{+}^N\to\real_{+}^N\colon\signal{x}\mapsto\signal{M}\signal{x}$
with $\signal{M} \in \real_{+}^{N \times N}$,
Problems \eqref{eq.sumrate.rate} and \eqref{eq.sumrate.sinr}
are related to matrix eigenvalue optimization
but have not been addressed in the literature.
While convex optimization involving eigenvalues of symmetric matrices is well studied \cite{overton88,vande96,lewis03,journ10,borwein13,douik19,benfe20},
these results are difficult to exploit for Problems \eqref{eq.sumrate.rate} and \eqref{eq.sumrate.sinr}. Indeed, for the applications under consideration,
the matrix $\signal{M}$ is nonsymmetric in general, so the matrix $\diagvec{s}\signal{M}$ is also nonsymmetric for any given $\bm{s} \in \real_{+}^{N}\setminus\{\signal{0}\}$, except for specific combinations of $\signal{M}$ and $\signal{s}$ that are unlikely to arise.
Note that, although convex optimization involving singular values of general matrices is well studied, singular values and eigenvalues are distinct notions for nonsymmetric matrices.
Since optimization problems involving the spectral radius of
(possibly nonsymmetric) nonnegative matrices are notoriously challenging,
only specific classes of problems have been addressed in the literature:
minimization or maximization of only the spectral radius over the so-called product family of nonnegative matrices \cite{blondel10,nesterov13,protasov16,cvetkovic22} and minimization of the max norm under the spectral radius constraint on nonnegative matrices \cite{nesterov20}.
The approaches in \cite{blondel10,nesterov13,protasov16,nesterov20,cvetkovic22} are based on the special properties of their problems
that are substantially different from Problems \eqref{eq.sumrate.rate} and \eqref{eq.sumrate.sinr}
even for linear PHOC mappings. As a result, these approaches are difficult to apply to the problems considered here.

\section{Mathematical preliminaries}
\label{sect.mathpre}
\subsection{Notation and definitions}
\label{sect.notation}
This subsection focuses on unifying terminology across various mathematical and wireless research domains.
We use the convention that the set $\Natural$ of natural numbers does not include zero.
The set of nonnegative and positive reals are denoted by $\real_{+}:= [0,\infty)$ and $\real_{++}:= (0,\infty)$, respectively.
We use the notation $\mathcal{N} := \{1,\ldots,N\}$ with $N \in \Natural$.
For a given vector $\signal{x} \in \real^N$,
its $n$th coordinate is denoted as $x_n$ for every $n \in \mathcal{N}$.
Inequalities involving vectors should be understood coordinatewise.
For example, given $\signal{x} \in \real^N$ and
$\signal{y} \in \real^N$,
we write $\signal{x} \leq \signal{y}$
if and only if $(\forall n \in \mathcal{N})~x_n \leq y_n$.
We write the transpose and the Hermitian transpose of a matrix $\signal{M}$ as $\signal{M}^{\mathsf{T}}$
and $\signal{M}^{\mathsf{H}}$, respectively.
We denote the diagonal matrix with the entries of $\signal{x} \in \mathbb{R}^{N}$ on the main diagonal
by $\mathrm{diag}(\signal{x}) \in \mathbb{R}^{N \times N}$.

A sequence $(\signal{x}_n)_{n\in\Natural} \subset \real^N$ is said to converge to $\signal{x} \in \real^N$ if $\lim_{n\to\infty}\|\signal{x}_n-\signal{x}\|=0$ for some (and, hence, every) norm $\|\cdot\|$ on $\real^N$.
Let $\mathcal{X}\subset\real^N$ and $\mathcal{Y}\subset\real^M$.
A mapping $f\colon\mathcal{X}\to\mathcal{Y}$ is said to be continuous if, for every sequence $(\signal{x}_n)_{n\in\Natural}$ in $\mathcal{X}$ converging to $\signal{x}\in\mathcal{X}$, the sequence $(f(\signal{x}_n))_{n\in\Natural}$ in $\mathcal{Y}$ converges to $f(\signal{x})\in\mathcal{Y}$.
A set $S \subset \real^N$ is said to be closed
if the limit of every convergent sequence $(\signal{x}_n)_{n\in\Natural}$ in $S$
lies in $S$;
bounded if $(\exists R > 0)(\forall \bm{x} \in S)~\|\bm{x}\| \leq R$ for some (and, hence, every) norm $\|\cdot\|$ on $\real^N$; and compact if and only if $S$ is bounded and closed (Heine-Borel theorem).

The ($1$-lower) level set of
$f\colon \real^{N}\to\real$ is denoted by
$\mathrm{lev}_{\le 1}(f) := \{\signal{x} \in \real^{N} \mid f(\signal{x}) \le 1\}$.
Let $C\subset\real^N$ be a nonempty convex set.
A function $f\colon C\to\real$ is \emph{convex} if $f(\alpha \signal{x}+(1-\alpha)\signal{y})\le \alpha f(\signal{x})+(1-\alpha)f(\signal{y})$
for every $\signal{x}$ and $\signal{y}$ in $C$ and $\alpha \in (0,1)$;
and \emph{strictly convex} if $f(\alpha \signal{x}+(1-\alpha)\signal{y}) < \alpha f(\signal{x})+(1-\alpha)f(\signal{y})$ for every $\signal{x}$ and $\signal{y}$ in $C$ and $\alpha \in (0,1)$.
The set of fixed points of a mapping $T\colon C \to C$ is denoted by $\mathrm{Fix}(T):=\{\signal{x}\in C \mid T(\signal{x})=\signal{x}\}$.

A norm $\|\cdot\|$ on $\real^N$ is said to be \emph{order-preserving} if\linebreak[4] $(\forall\signal{x}\in\real_+^N) (\forall\signal{y}\in\real_+^N)~\signal{x}\le\signal{y}\Rightarrow \|\signal{x}\|\le\|\signal{y}\|$; and
\emph{polyhedral} if there exist $L\in\Natural$ and $\signal{a}_1,\ldots,\signal{a}_L$ in $\real^N_{+}\setminus\{\signal{0}\}$ such that $(\forall\signal{x}\in\real^N)~\|\signal{x}\|=\max_{l\in\{1,\ldots,L\}}\signal{a}_l^{\mathsf{T}}|\signal{x}|$, where 
$|\signal{x}| := (|x_n|)_{n \in \mathcal{N}}\in\real^N_+$.

A function $f\colon \real_+^N\to\real_{+}$ is said to be \emph{order-preserving} if $(\forall\signal{x}\in\real_+^N)(\forall\signal{y}\in\real_+^N)~\signal{x}\le\signal{y}\Rightarrow f(\signal{x})\le f(\signal{y})$;
\emph{positively homogeneous} if $(\forall\signal{x}\in\real_+^N)(\forall\alpha>0)~ f(\alpha\signal{x}) = \alpha f(\signal{x})$;
and \emph{strictly subhomogeneous} if $(\forall\signal{x}\in\real_+^N)(\forall\alpha \in (0, 1))~ f(\alpha\signal{x}) > \alpha f(\signal{x})$.
Differently from \cite{lem13,krause2015positive}, we do not exclude $\bm{x} = \bm{0}$
in our definition of strict subhomogeneity, which can be shown  to be equivalent to \emph{scalability}
$(\forall\signal{x}\in\real_+^N)(\forall\beta > 1)~ f(\beta\signal{x}) < \beta f(\signal{x})$
studied in the wireless literature \cite{yates95,martin11}.

	A mapping $F\colon \real_+^N\to\real_{+}^N\colon \signal{x}\mapsto
	(f_n(\signal{x}))_{n \in \mathcal{N}}$ is called
	a SSOC mapping
	(respectively, PHOC mapping)
	if $f_n\colon \real_+^N\to\real_{+}$ is strictly subhomogeneous, order-preserving, and continuous
	(respectively, positively homogeneous, order-preserving, and continuous) for every
	$n \in \mathcal{N}$.
If $F\colon \real_{+}^{N}\to\real_{+}^{N}$ is strictly subhomogeneous and order-preserving,
then we have $(\forall \bm{x}\in\real_+^N)~F(\bm{x}) > \signal{0}$, and hence
we usually denote a SSOC mapping as $F\colon \real_{+}^{N}\to\real_{++}^{N}$.

The spectral radius of PHOC mappings is defined as the supremum of all eigenvalues \cite[Definition~3.2]{nussbaum1986convexity}: 

\begin{definition}
	\label{def.nl_radius} The spectral radius of a PHOC mapping $G\colon \real^N_+\to\real^N_+$ is defined to be 
	\begin{align}
		\label{eq.nl_radius}
		\rho(G) := \sup\{\lambda\in\real_+~|~(\exists \signal{x}\in\real_+^N\setminus\{\signal{0}\})~ G(\signal{x})=\lambda\signal{x}\}.
	\end{align}
\end{definition}
We recall that there always exists a nonnegative scalar and a corresponding vector in $\real_+^{N}\setminus\{\signal{0}\}$ attaining the supremum in \eqref{eq.nl_radius} \cite[Proposition~5.3.2(ii) and Corollary~5.4.2]{lem13}.
For a linear PHOC mapping $G\colon \real_{+}^{N}\rightarrow\real_{+}^{N}\colon \signal{x}\mapsto\bm{M}\bm{x}$
with $\signal{M} \in \real_{+}^{N \times N}$,
standard Perron-Frobenius theory shows that
$\rho(G)$ is identical to the usual notion of the spectral radius of $\bm{M}$ in linear algebra.
The nonlinear spectral radius function $\spradG$
used in \eqref{eq.def.regionsinr} and \eqref{eq.def.regionrate} is formally defined as follows.

\begin{definition}
\label{def:spradG}
For a given PHOC mapping $G\colon \real_{+}^{N}\to\real_{+}^N$,
we define the function
$$\spradG \colon \real^N_+\to\real_+\colon \signal{x}\mapsto\rho(T_{G,\bm{x}}),$$
where, for each $\bm{x} \in \real_+^N$, we recall that $\rho(T_{G,\bm{x}})$ is
the spectral radius of the PHOC mapping defined by
$$
T_{G,\bm{x}}\colon\real_{+}^{N}\to\real_{+}^{N}\colon \signal{p}\mapsto \diagvec{x}(G(\bm{p})).
$$
\end{definition}
For a linear PHOC mapping $G\colon \real_{+}^{N}\rightarrow\real_{+}^{N}\colon \signal{x}\mapsto\bm{M}\bm{x}$
with $\signal{M} \in \real_{+}^{N \times N}$,
by regarding $\bm{M}$ as this linear mapping (with the standard basis),
we also write $\varrho_{\bm{M}}$ instead of $\spradG$.

\subsection{Convex analysis and fixed point theory in Euclidean spaces}
\label{sect.pre.convex}
This subsection presents necessary results of convex analysis and fixed point theory
in a Euclidean space $\real^{N}$
equipped with the standard inner product
$\langle \bm{x}, \bm{y} \rangle := \bm{x}^{\mathsf{T}}\bm{y}$ for every $(\bm{x},\bm{y}) \in \real^N \times \real^N$
and its induced norm $\|\bm{x}\|_2 := \sqrt{\langle \bm{x}, \bm{x} \rangle }$ for every $\bm{x} \in \real^N$.
A mapping $T\colon \real^N\to\real^N$ is called \emph{Lipschitz continuous} on a set $S \subset \real^{N}$ with Lipschitz constant $\kappa > 0$ if
\begin{align*}
(\forall \bm{x} \in S)(\forall \bm{y} \in S)\quad\|T(\bm{x})-T(\bm{y}) \|_{2} \leq \kappa\|\bm{x}-\bm{y}\|_{2}.
\end{align*}
In particular, $T\colon \real^N\rightarrow\real^N$ is called \emph{nonexpansive} if it is Lipschitz continuous on $\real^N$ with Lipschitz constant $1$.
A mapping $T\colon \real^N\rightarrow\real^N$ is called \emph{quasi-nonexpansive} if
\begin{align*}
(\forall \bm{x}\in\real^N)(\forall \bm{y}\in\mathrm{Fix}(T))\quad\|T(\bm{x})-\bm{y} \|_{2} \leq \|\bm{x}-\bm{y}\|_{2}.
\end{align*}
Let $C \subset \mathbb{R}^{N}$ be a nonempty closed convex set.
For every $\signal{x} \in \real^{N}$, there exists a
unique point $P_{C}(\signal{x}) \in C$ satisfying
\begin{align}
\label{eq.def.EucDist}
d_2(\signal{x},C) := \min_{\signal{y} \in C}\|\bm{x}-\bm{y} \|_2 = \|\bm{x}-P_{C}(\signal{x}) \|_2,
\end{align}
and $P_{C}\colon\real^{N}\to C$ is called the \emph{projection mapping} onto $C$.
The \emph{subdifferential} of a convex function $h \colon \real^{N}\to\real$ at $\bm{x}\in\mathbb{R}^{N}$ is defined by
\begin{align*}
\partial h(\bm{x}) :=  \{\bm{u} \in \mathbb{R}^{N} \,|\, (\forall \bm{y} \in \mathbb{R}^{N})~\langle\bm{y}-\bm{x},\bm{u} \rangle + h(\bm{x}) \leq h(\bm{y}) \},
\end{align*}
and $\bm{u} \in \partial h(\bm{x})$ is called a \emph{subgradient} of $h$ at $\signal{x}$.
Convex functions from $\real^{N}$ to $\real$ have the following useful property.
\begin{fact}[{\cite[Proposition 16.20]{baus17}}]
\label{fact.boundedSubgrad}
Let $h\colon \real^{N}\to\real$ be a convex function. Then
$h$ is continuous on $\real^N$, and $\partial h(\bm{x}) \neq \varnothing$ for every $\bm{x} \in \real^{N}$.
Moreover, for every bounded subset $S$ of $\real^{N}$, the set $\{\bm{u} \in \partial h(\bm{x}) \mid \bm{x} \in S \}$ is bounded.
\end{fact}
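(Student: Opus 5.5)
Since $h$ is convex on $\real^N$, it is continuous on $\real^N$ and has nonempty subdifferential everywhere; the substance of Fact~\ref{fact.boundedSubgrad} is the local boundedness of $\partial h$ on bounded sets. The plan is to prove these three claims in order, each reducing to the one before.

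For continuity, I would first show that $h$ is bounded above on a neighborhood of each point. Given $\bm{x}_0$, take a cube $Q$ centered at $\bm{x}_0$ with vertices $\bm{v}_1,\ldots,\bm{v}_{2^N}$. Every point of $Q$ is a convex combination of the vertices, so Jensen's inequality gives $h\le \max_i h(\bm{v}_i)=:M$ on $Q$. Now let $\|\bm{y}-\bm{x}_0\|_2\le\delta$ with $\delta$ small enough that the ball of radius $\delta$ about $\bm{x}_0$ lies in $Q$, and set $t:=\|\bm{y}-\bm{x}_0\|_2/\delta\in[0,1]$.
\begin{itemize}
\item Writing $\bm{y}$ as a convex combination of $\bm{x}_0$ and a point on the sphere of radius $\delta$, convexity gives $h(\bm{y})-h(\bm{x}_0)\le t\,(M-h(\bm{x}_0))$.
\item Writing $\bm{x}_0$ as a convex combination of $\bm{y}$ and the reflected point $\bm{x}_0-(\bm{y}-\bm{x}_0)/t$, which also lies in the ball, gives $h(\bm{x}_0)-h(\bm{y})\le t\,(M-h(\bm{x}_0))$.
\end{itemize}
Together these yield $|h(\bm{y})-h(\bm{x}_0)|\le (M-h(\bm{x}_0))\|\bm{y}-\bm{x}_0\|_2/\delta$, i.e., local Lipschitz continuity and hence continuity.

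For nonemptiness of $\partial h(\bm{x}_0)$, I would use separation. The epigraph $E:=\{(\bm{y},s)\mid h(\bm{y})\le s\}$ is convex. By the previous step, $(\bm{x}_0,h(\bm{x}_0)+1)$ lies in its interior, so $\mathrm{int}\,E\ne\varnothing$, while $(\bm{x}_0,h(\bm{x}_0))$ is a boundary point. A supporting hyperplane at that boundary point gives $(\bm{u},\beta)\neq\bm 0$ with $\langle \bm{u},\bm{y}-\bm{x}_0\rangle+\beta(s-h(\bm{x}_0))\le 0$ for all $(\bm{y},s)\in E$.
\begin{itemize}
\item Letting $s\to\infty$ forces $\beta\le 0$.
\item If $\beta=0$, taking $\bm{y}=\bm{x}_0+\bm{u}$ (which lies in the domain $\real^N$) gives $\|\bm{u}\|_2^2\le0$, so $\bm{u}=\bm 0$, contradicting $(\bm{u},\beta)\neq\bm 0$. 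Hence $\beta<0$.
\end{itemize}
Dividing by $-\beta$ shows that $-\bm{u}/\beta\in\partial h(\bm{x}_0)$.

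The step I expect to be the main obstacle is the uniform bound over a bounded set, because the local constants above vary from point to point and must be made uniform. I would handle this globally. Enclose $S$ in a ball $B$ of radius $R$ and let $B'$ be the concentric ball of radius $R+1$. Since $h$ is continuous and $B'$ is compact, $h$ attains bounds $m\le h\le M$ on $B'$. For $\bm{x}\in S$ and $\bm{u}\in\partial h(\bm{x})$ with $\bm{u}\neq\bm 0$, set $\bm{y}:=\bm{x}+\bm{u}/\|\bm{u}\|_2$, which lies in $B'$. The subgradient inequality gives $\|\bm{u}\|_2\le h(\bm{y})-h(\bm{x})\le M-m$. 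This bound does not depend on $\bm{x}$ or $\bm{u}$, so the set $\{\bm{u}\in\partial h(\bm{x})\mid \bm{x}\in S\}$ is bounded.
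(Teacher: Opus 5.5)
Your proof is correct and self-contained. The paper gives no argument of its own and simply cites Bauschke--Combettes, and your proof follows the standard route behind that result: local upper boundedness gives continuity, continuity gives a nonvertical supporting hyperplane to the epigraph and hence a subgradient, and boundedness of $h$ on a compact enlargement of $S$ bounds $\partial h$ on $S$. One wording point: your estimate $|h(\bm{y})-h(\bm{x}_0)|\le (M-h(\bm{x}_0))\|\bm{y}-\bm{x}_0\|_2/\delta$ is anchored at the single point $\bm{x}_0$, so it gives continuity at $\bm{x}_0$ (all you need) rather than local Lipschitz continuity, which would require a bound for every pair of points near $\bm{x}_0$.
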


By Fact \ref{fact.boundedSubgrad} and Fermat's rule $\bm{x}^{\star} \in  \argmin_{\bm{x}\in\mathbb{R}^{N}} h(\bm{x}) \Leftrightarrow \bm{0} \in \partial h(\bm{x}^{\star})$,
subgradient projection mappings can be defined as follows.
\begin{definition}
\label{def.subgradProj}
Let $h\colon \real^{N}\to\real$ be a convex function with
$\mathrm{lev}_{\le 1}(h) \neq \varnothing$.
Let $g\colon \real^{N}\to\real^{N}$ be a selection of the subdifferential of $h$, i.e.,
$(\forall \signal{x} \in \real^{N})~g(\signal{x}) \in \partial h(\signal{x})$.
The subgradient projection mapping
$T_{\mathrm{sp}(h)}\colon \real^{N}\to\real^{N}$, relative to $h$, onto $\mathrm{lev}_{\le 1}(h)$ is defined by
	\begin{align*}
		T_{\mathrm{sp}(h)}(\signal{x}) :=
		\begin{cases}
			 \signal{x} - (h(\signal{x})-1)g(\signal{x})/\|g(\signal{x})\|_2^2,
			 & \text{if}~ h(\signal{x}) > 1;\\
			 \signal{x},
			 & \text{if}~h(\signal{x}) \le 1.
		\end{cases}
	\end{align*}
\end{definition}

The following result establishes the convergence of the HSD method using subgradient projections
for a class of constrained convex optimization problems.
\begin{fact}[HSD method using subgradient projections {\cite{yamada04b}}]
\label{fact.hsdmsp}
Suppose that the following conditions hold:
\begin{enumerate}[\itemsep=2pt]
\item[(a1)] $h\colon \real^{N}\to\real$ is convex, $\mathcal{K} \subset \real^{N}$ is compact and convex, and $\mathcal{K} \cap \mathrm{lev}_{\le 1}(h) \neq \varnothing$; and
\item[(a2)] $\Theta: \real^{N}\to\real$ is convex and (G\^{a}teaux) differentiable (see Definition \ref{def.differentiable} in Appendix \ref{appendix.proof.subgradient})
on $\mathbb{R}^{N}$, and its
gradient $\nabla \Theta:\mathbb{R}^{N}\to\mathbb{R}^{N}$ is Lipschitz continuous on $\mathcal{K}$.
\end{enumerate}
Then we have\footnote{%
The mapping $P_{\mathcal{K}} \circ T_{\mathrm{sp}(h)}$ belongs to a class of \emph{quasi-shrinking mappings} \cite[Definition 1]{yamada04b} -- a special subclass of quasi-nonexpansive mappings.}
$$\mathcal{S} := \argmin_{\signal{x} \in \mathcal{K} \cap \mathrm{lev}_{\le 1}(h)}\Theta(\signal{x})
= \argmin_{\signal{x} \in \mathrm{Fix}(P_{\mathcal{K}} \circ T_{\mathrm{sp}(h)})}\Theta(\signal{x})
\neq \varnothing.
$$
Furthermore, let $\bm{x}_1 \in \real^{N}$ and $(\mu_k)_{k\in\mathbb{N}} \subset \real_{+}$ such that
$\lim_{k\to\infty}\mu_k = 0$ and $\sum_{k\in\mathbb{N}}\mu_k = \infty$,
and generate the sequence
$(\signal{x}_k)_{k\in\mathbb{N}}$ by
	\begin{align*}
		\signal{x}_{k+1}
		:= (P_{\mathcal{K}} \circ T_{\mathrm{sp}(h)})(\signal{x}_k)
		-\mu_{k}\nabla\Theta ((P_{\mathcal{K}} \circ T_{\mathrm{sp}(h)})(\signal{x}_k))
	\end{align*}
for every $k \in \mathbb{N}$. Then $\lim_{k\to\infty}d_2(\signal{x}_k,\mathcal{S}) = 0$ holds.
\end{fact}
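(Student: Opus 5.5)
The plan is to reduce the statement to the general convergence theorem of the HSD method for quasi-shrinking mappings. To do this I will verify three properties of $T := P_{\mathcal{K}}\circ T_{\mathrm{sp}(h)}$: (i) $\mathrm{Fix}(T)=\mathcal{K}\cap\mathrm{lev}_{\le 1}(h)$; (ii) $T$ is quasi-nonexpansive, and in fact uniformly quasi-shrinking on $\mathcal{K}$; (iii) the iterates remain bounded. I will then run the standard descent argument on $\Theta$.

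\emph{Fixed points and existence.} First I show that $g(\bm{x})\neq\bm{0}$ whenever $h(\bm{x})>1$. Since $\mathrm{lev}_{\le 1}(h)\neq\varnothing$, such an $\bm{x}$ is not a minimizer of $h$, so Fermat's rule gives the claim and $T_{\mathrm{sp}(h)}$ is well defined. Next, take $\bm{y}\in\mathrm{lev}_{\le 1}(h)$ and $\bm{x}$ with $h(\bm{x})>1$. The subgradient inequality gives $\langle \bm{x}-\bm{y},g(\bm{x})\rangle\ge h(\bm{x})-h(\bm{y})\ge h(\bm{x})-1$. Expanding the square then yields
\begin{align*}
\|T_{\mathrm{sp}(h)}(\bm{x})-\bm{y}\|_2^2\le\|\bm{x}-\bm{y}\|_2^2-\|T_{\mathrm{sp}(h)}(\bm{x})-\bm{x}\|_2^2 ,
\end{align*}
so $T_{\mathrm{sp}(h)}$ is firmly quasi-nonexpansive with $\mathrm{Fix}(T_{\mathrm{sp}(h)})=\mathrm{lev}_{\le 1}(h)$. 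The projection $P_{\mathcal{K}}$ is firmly nonexpansive with $\mathrm{Fix}(P_{\mathcal{K}})=\mathcal{K}$. Because the two fixed point sets intersect by (a1), the usual argument for compositions of strongly quasi-nonexpansive maps gives $\mathrm{Fix}(T)=\mathcal{K}\cap\mathrm{lev}_{\le 1}(h)$: for $\bm{x}\in\mathrm{Fix}(T)$ and a common fixed point $\bm{y}$, summing the two inequalities forces both displacements to vanish. The same argument shows that $T$ is quasi-nonexpansive. Finally, $h$ is continuous by Fact \ref{fact.boundedSubgrad}, so $\mathrm{lev}_{\le 1}(h)$ is closed, and hence $\mathcal{K}\cap\mathrm{lev}_{\le 1}(h)$ is nonempty and compact. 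Since $\Theta$ is continuous, the argmin $\mathcal{S}$ is nonempty, and the two argmin sets in the statement coincide.

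\emph{Quasi-shrinking.} I need that for every $r>0$,
\begin{align*}
\inf\{d_2(\bm{x},\mathrm{Fix}(T))-d_2(T(\bm{x}),\mathrm{Fix}(T)) \mid \bm{x}\in\mathcal{K},\ d_2(\bm{x},\mathrm{Fix}(T))\ge r\}>0 .
\end{align*}
I expect this to be the main technical obstacle, because $T_{\mathrm{sp}(h)}$ need not be continuous (the selection $g$ may jump), so a plain compactness argument fails. I would argue by contradiction. Suppose there is a sequence $\bm{x}_j\in\mathcal{K}$ with $d_2(\bm{x}_j,\mathrm{Fix}(T))\ge r$ along which the decrease tends to $0$. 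By the firm quasi-nonexpansiveness inequalities, $\|T_{\mathrm{sp}(h)}(\bm{x}_j)-\bm{x}_j\|_2\to 0$ and $\|P_{\mathcal{K}}T_{\mathrm{sp}(h)}(\bm{x}_j)-T_{\mathrm{sp}(h)}(\bm{x}_j)\|_2\to0$. Fact \ref{fact.boundedSubgrad} bounds $\|g(\bm{x}_j)\|_2$ on the compact set $\mathcal{K}$, so $(h(\bm{x}_j)-1)_+\to0$. Passing to a convergent subsequence $\bm{x}_j\to\bar{\bm{x}}\in\mathcal{K}$, continuity of $h$ gives $h(\bar{\bm{x}})\le1$. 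Thus $\bar{\bm{x}}\in\mathrm{Fix}(T)$, which contradicts $d_2(\bar{\bm{x}},\mathrm{Fix}(T))\ge r$.

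\emph{Convergence.} Set $\bm{z}_k:=T(\bm{x}_k)\in\mathcal{K}$. Since $\nabla\Theta$ is bounded on the compact set $\mathcal{K}$ and $\mu_k\to0$, we have $\|\bm{x}_{k+1}-\bm{z}_k\|_2\to0$, so all iterates stay in a bounded set. I then follow the proof in \cite{yamada04b}. I compare $d_2(\bm{x}_k,\mathcal{S})$, or more robustly $\|\bm{x}_k-\bm{u}\|_2$ for $\bm{u}\in\mathcal{S}$, combined with the values $\Theta(\bm{z}_k)$. Quasi-shrinking drives $d_2(\bm{z}_k,\mathrm{Fix}(T))\to0$ whenever the perturbations $\mu_k\nabla\Theta$ are small. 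The condition $\sum_k\mu_k=\infty$, together with convexity ($\langle\nabla\Theta(\bm{z}),\bm{z}-\bm{u}\rangle\ge\Theta(\bm{z})-\Theta(\bm{u})$), forces $\liminf$ of $\Theta(\bm{z}_k)-\min\Theta$ over feasible points near $\mathrm{Fix}(T)$ to be nonpositive. A case split (eventually decreasing distances versus infinitely many increases) then upgrades this to $d_2(\bm{x}_k,\mathcal{S})\to0$. Since $\Theta$ need not be strictly convex, I will track the distance to the whole set $\mathcal{S}$ rather than to a single point.
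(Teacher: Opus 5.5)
Your route is essentially the paper's. The paper gives no argument beyond citing \cite{yamada04b}, where $P_{\mathcal{K}}\circ T_{\mathrm{sp}(h)}$ is shown to be quasi-shrinking (assuming subgradients are bounded on bounded sets, which holds automatically here by Fact~\ref{fact.boundedSubgrad}) and the HSD convergence theorem for such maps is proved. Your checks of $\mathrm{Fix}(T)=\mathcal{K}\cap\mathrm{lev}_{\le 1}(h)$, quasi-nonexpansiveness, nonemptiness of $\mathcal{S}$, and quasi-shrinking are a correct self-contained version of those hypotheses, and you defer the final convergence step to \cite{yamada04b} just as the paper does.

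One small fix is needed. $T$ is applied at $\bm{x}_{k}=\bm{z}_{k-1}-\mu_{k-1}\nabla\Theta(\bm{z}_{k-1})$, which in general lies outside $\mathcal{K}$, so you need quasi-shrinking on a bounded closed set containing all iterates, not on $\mathcal{K}$ alone. Your contradiction argument gives this verbatim: $d_2(\bm{x}_j,\mathcal{K})\le\|\bm{x}_j-T_{\mathrm{sp}(h)}(\bm{x}_j)\|_2+\|T_{\mathrm{sp}(h)}(\bm{x}_j)-P_{\mathcal{K}}(T_{\mathrm{sp}(h)}(\bm{x}_j))\|_2\to 0$, so the cluster point $\bar{\bm{x}}$ still lies in $\mathcal{K}$.
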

\begin{remark}
In \cite[Proposition 6]{yamada04b},
it is stated as an assumption that
any selection $g\colon \real^{N}\to\real^{N}$ of $\partial h$
(i.e., that satisfies $(\forall \signal{x} \in \real^{N})~ g(\signal{x}) \in \partial h (\signal{x})$)
is bounded on every bounded subset of $\real^N$.
Since we focus on the finite-dimensional space $\real^{N}$, this condition automatically holds by Fact \ref{fact.boundedSubgrad}.
\end{remark}

\subsection{Nonlinear spectral radius functions for characterizing achievable rate regions}
\label{sect.general.sumrateprob}
This subsection collects the essential properties of
the PHOC mapping $G$
and the nonlinear spectral radius function $\spradG$ used to characterize the achievable rate region $\mathcal{R}$ in Problem \eqref{eq.sumrate.rate}.
In particular, we focus on the following class of PHOC mappings
$\fnorm\colon \real_{+}^{N}\rightarrow\real_{+}^N$
constructed from
SSOC mappings $F$ and order-preserving norms $\|\cdot\|$ by the following procedure given in \cite{renatoconvex}
(see Sect.~\ref{sect.wireless} for its relation to wireless applications).

\begin{assumption}
\label{asmp.compatible}
Let $F\colon \real_+^N\to\real_{++}^N\colon \signal{x}\mapsto (f_n(\signal{x}))_{n \in \mathcal{N}}$ be a continuous mapping.
For each $n \in \mathcal{N}$,
we assume that there exist a nonempty set
$\mathcal{Y}_n$, positive scalars $(u_{n, y})_{y \in \mathcal{Y}_n}$ bounded away from zero, and PHOC functions $(g_{n, y})_{y \in\mathcal{Y}_n}$, such that
	$$(\forall\signal{x}\in\real_+^N)\quad f_n(\signal{x}) =\inf_{y\in\mathcal{Y}_n}(g_{n, y}(\signal{x})+{u}_{n, y}).$$
    In addition, given an order-preserving norm $\|\cdot\|$ on $\real^N$, we assume that the function $(f_n)_{\|\cdot\|}\colon\real_{+}^{N}\to\real_{+}$ defined by
    $$(\forall\signal{x}\in\real_+^N)\quad (f_n)_{\|\cdot\|}(\signal{x}) := \inf_{y\in\mathcal{Y}_n}(g_{n, y}(\signal{x})+{u}_{n, y}\|\signal{x}\|)$$
    is continuous on $\real_{+}^N$ for every $n \in \mathcal{N}$. (NOTE: we assume everywhere continuity to avoid technical digressions.)
    \end{assumption}

\begin{fact}[{\cite[Lemma 1]{renatoconvex}}]
	\label{fact.compatible}
	Under Assumption \ref{asmp.compatible}, the mapping $F$ is a SSOC mapping, and the mapping
    $$F_{\|\cdot\|}\colon \real^N_+\to\real_{+}^N\colon\signal{x}\mapsto((f_n)_{\|\cdot\|}(\signal{x}))_{n\in\mathcal{N}}$$
	is a PHOC mapping.
\end{fact}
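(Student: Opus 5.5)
We need to prove two things under Assumption \ref{asmp.compatible}: that $F$ is SSOC, and that $F_{\|\cdot\|}$ is PHOC. Continuity of $F$ and of each $(f_n)_{\|\cdot\|}$ is assumed outright, so for each $n\in\mathcal{N}$ only order preservation, strict subhomogeneity of $f_n$, and positive homogeneity of $(f_n)_{\|\cdot\|}$ remain. The plan is to derive each property for the individual functions inside the infimum and then pass it through the infimum.

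\textbf{Positivity and order preservation.} Let $\underline{u}_n := \inf_{y\in\mathcal{Y}_n} u_{n,y} > 0$, which is positive by the bounded-away-from-zero hypothesis. Since $g_{n,y}\ge 0$, we get $f_n(\signal{x}) \ge \underline{u}_n > 0$, so $F$ indeed maps into $\real_{++}^N$. If $\signal{x}\le\signal{y}$, then $g_{n,y'}(\signal{x})+u_{n,y'} \le g_{n,y'}(\signal{y})+u_{n,y'}$ for every index $y'$, and taking infima gives $f_n(\signal{x})\le f_n(\signal{y})$. The same argument applies to $(f_n)_{\|\cdot\|}$, now also using that the norm is order-preserving on $\real_+^N$.

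\textbf{Positive homogeneity of $(f_n)_{\|\cdot\|}$.} For $\alpha>0$ we have $g_{n,y}(\alpha\signal{x})+u_{n,y}\|\alpha\signal{x}\| = \alpha\bigl(g_{n,y}(\signal{x})+u_{n,y}\|\signal{x}\|\bigr)$. Multiplication by $\alpha>0$ commutes with the infimum, so $(f_n)_{\|\cdot\|}(\alpha\signal{x}) = \alpha (f_n)_{\|\cdot\|}(\signal{x})$. The function is nonnegative, so its codomain is $\real_+$. Together with the assumed continuity, this shows $F_{\|\cdot\|}$ is PHOC.

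\textbf{Strict subhomogeneity of $f_n$ (the main obstacle).} For $\alpha\in(0,1)$ and each $y$,
$$g_{n,y}(\alpha\signal{x})+u_{n,y} = \alpha\bigl(g_{n,y}(\signal{x})+u_{n,y}\bigr) + (1-\alpha)u_{n,y} \ge \alpha\bigl(g_{n,y}(\signal{x})+u_{n,y}\bigr) + (1-\alpha)\underline{u}_n .$$
Taking the infimum over $y$ yields
$$f_n(\alpha\signal{x}) \ge \alpha f_n(\signal{x}) + (1-\alpha)\underline{u}_n > \alpha f_n(\signal{x}).$$
This is the step where the hypothesis is essential: the termwise strict inequality $g_{n,y}(\alpha\signal{x})+u_{n,y} > \alpha(\cdots)$ alone would only give a non-strict inequality after the infimum. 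The uniform gap $(1-\alpha)\underline{u}_n$ coming from the lower bound $\underline{u}_n>0$ is exactly what keeps the inequality strict. The case $\signal{x}=\signal{0}$ is covered by the same computation. Hence $F$ is SSOC.
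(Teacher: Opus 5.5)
Your proof is correct. The paper does not prove this fact itself but imports it from \cite[Lemma~1]{renatoconvex}, and your direct verification is the natural argument: you pass order preservation and positive homogeneity through the infimum, and you get strictness from the uniform gap $(1-\alpha)\underline{u}_n$ supplied by the bounded-away-from-zero hypothesis. Taking continuity of $F$ and of each $(f_n)_{\|\cdot\|}$ directly from Assumption~\ref{asmp.compatible} is also exactly what the paper intends with its note on everywhere continuity.
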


\begin{remark}
If $f_n~(n \in \mathcal{N})$ and $\|\cdot\|$ in Problem \eqref{eq.sumrate.power} satisfy Assumption \ref{asmp.compatible}, then the global optimal value of Problem \eqref{eq.sumrate.power} is the same as that of Problem \eqref{eq.sumrate.rate} with $G = \fnorm$ (see Appendix \ref{appendix.conversion} for the scheme for converting solutions).
\end{remark}

The next fact shows properties of 
the nonlinear spectral radius function $\spradG$
for the class of PHOC mappings $G = \fnorm$.

\begin{fact}[{\cite[Proposition 2]{renatoconvex}}]
	\label{fact.qnorm}
	Suppose that $G =\fnorm$ holds for some PHOC mapping $\fnorm\colon \real_+^N\to\real_{+}^N$ constructed from $F$ and $\|\cdot\|$ that satisfy Assumption \ref{asmp.compatible}.
	Then, for the function $\spradG$ in Definition \ref{def:spradG},
	 each of the following holds:
	\begin{enumerate}
		\item[(i)] $(\forall \alpha>0)(\forall \signal{x}\in\real^N_+)\quad \spradG(\alpha\signal{x})=\alpha \spradG(\signal{x})$\vspace{2pt};
		\item[(ii)] $(\forall \signal{x}\in\real^N_+)\quad\spradG(\signal{x})=0\Leftrightarrow \signal{x}=\signal{0}$\vspace{1pt};
		\item[(iii)] $\spradG$ is order-preserving; and\vspace{1pt}
		\item[(iv)] $\spradG$ is continuous on $\real^N_+$.
	\end{enumerate}
\end{fact}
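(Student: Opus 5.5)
The plan is to treat $\spradG(\signal{x})=\rho(T_{G,\bm{x}})$ entirely through a few standard facts on the cone spectral radius of PHOC maps on $\real_+^N$, all available in \cite{lem13}:
\begin{enumerate}
\item[(a)] the supremum in \eqref{eq.nl_radius} is attained by some eigenvector $\bm{p}\in\real_+^N\setminus\{\signal{0}\}$;
\item[(b)] the lower Collatz--Wielandt bound: if $T(\bm{p})\ge c\,\bm{p}$ for some $\bm{p}\in\real_+^N\setminus\{\signal{0}\}$ and $c\ge0$, then $\rho(T)\ge c$;
\item[(c)] monotonicity: if $T\le S$ pointwise on $\real_+^N$, then $\rho(T)\le\rho(S)$;
\item[(d)] Gelfand's formula $\rho(T)=\lim_k\|T^k\|^{1/k}=\inf_k\|T^k\|^{1/k}$, where $\|T\|:=\sup\{\|T(\bm{p})\|\mid\bm{p}\ge\signal{0},\ \|\bm{p}\|\le1\}$.
\end{enumerate}
Each item (i)--(iv) of Fact \ref{fact.qnorm} then reduces to a short argument about the family $T_{G,\bm{x}}=\diagvec{x}\circ G$.

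\emph{Item (i).} Since $T_{G,\alpha\bm{x}}=\alpha T_{G,\bm{x}}$, every eigenpair $(\lambda,\bm{p})$ of $T_{G,\bm{x}}$ gives the eigenpair $(\alpha\lambda,\bm{p})$ of $T_{G,\alpha\bm{x}}$, and conversely. Hence the suprema scale by $\alpha$.

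\emph{Item (iii).} If $\bm{x}\le\bm{y}$, then $T_{G,\bm{x}}(\bm{p})\le T_{G,\bm{y}}(\bm{p})$ for all $\bm{p}\ge\signal{0}$, because $G(\bm{p})\ge\signal{0}$. Fact (c) then gives $\spradG(\bm{x})\le\spradG(\bm{y})$.

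\emph{Item (ii).} If $\bm{x}=\signal{0}$, then $T_{G,\bm{x}}\equiv\signal{0}$, so $\spradG(\bm{x})=0$. For the converse, this is where Assumption \ref{asmp.compatible} enters. Let $u_{\min}:=\inf_y u_{n,y}>0$ (over all $n$). Since $g_{n,y}\ge0$, we have $(f_n)_{\|\cdot\|}(\bm{p})\ge u_{\min}\|\bm{p}\|$. Suppose $x_n>0$ and let $\bm{e}_n$ be the $n$th unit vector. Then $T_{G,\bm{x}}(\bm{e}_n)\ge x_n u_{\min}\|\bm{e}_n\|\,\bm{e}_n$, and fact (b) gives $\spradG(\bm{x})\ge x_n u_{\min}\|\bm{e}_n\|>0$.

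\emph{Item (iv).} I expect continuity to be the main obstacle, since the cone spectral radius is in general only upper semicontinuous in the map. I would split it into two halves.

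For upper semicontinuity, the map $\bm{x}\mapsto\|T_{G,\bm{x}}^k\|$ is continuous for each $k$: it is a supremum over a compact set of a jointly continuous function, using continuity of $G=\fnorm$ from Fact \ref{fact.compatible}. By fact (d), $\spradG$ is then an infimum of continuous functions, hence upper semicontinuous.

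For lower semicontinuity at $\bm{x}$, I would use the special diagonal structure. If $\spradG(\bm{x})=0$ there is nothing to prove. Otherwise let $\rho:=\spradG(\bm{x})>0$, and take an eigenvector $\bm{p}$ with $\diagvec{x}G(\bm{p})=\rho\bm{p}$ (fact (a)). Let $I:=\{n\mid p_n>0\}$. For $n\in I$ we have $x_nG(\bm{p})_n=\rho p_n>0$, so $x_n>0$ and $G(\bm{p})_n>0$. Now take $\bm{x}_k\to\bm{x}$. Then
\[
T_{G,\bm{x}_k}(\bm{p})\ge\Big(\min_{n\in I}\tfrac{x_{k,n}}{x_n}\Big)\rho\,\bm{p},
\]
where the components outside $I$ are trivial because $p_n=0$ there. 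By fact (b), $\spradG(\bm{x}_k)\ge\big(\min_{n\in I}x_{k,n}/x_n\big)\rho$, and the factor tends to $1$, so $\liminf_k\spradG(\bm{x}_k)\ge\spradG(\bm{x})$. Combining the two halves gives continuity on $\real_+^N$.
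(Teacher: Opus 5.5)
Your proposal is correct. The paper gives no proof of this statement; it imports it verbatim from \cite[Proposition 2]{renatoconvex}, adding only the remark before Definition~\ref{def:spradG} that the supremum in \eqref{eq.nl_radius} is attained \cite{lem13}. Your argument is therefore a genuinely self-contained route rather than a reconstruction of an in-text proof.

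\begin{itemize}
\item Items (i) and (iii) follow at once from $T_{G,\alpha\bm{x}}=\alpha T_{G,\bm{x}}$ and from monotonicity of the cone spectral radius.
\item Item (ii) uses exactly the part of Assumption~\ref{asmp.compatible} that matters, namely $G(\bm{p})\ge u_{\min}\|\bm{p}\|\,\bm{1}$ with $u_{\min}>0$. Combined with the lower Collatz--Wielandt bound at $\bm{e}_n$, this gives positivity.
\item For item (iv), the split into two halves is sound.
  \begin{itemize}
  \item Upper semicontinuity holds because $\spradG$ is the infimum over $k$ of the continuous functions $\bm{x}\mapsto\|T_{G,\bm{x}}^k\|^{1/k}$.
  \item Lower semicontinuity holds because the fixed eigenvector $\bm{p}$ of $T_{G,\bm{x}}$ can be tested against $T_{G,\bm{x}_k}$. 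This works since $x_n>0$ on the support of $\bm{p}$, and on $\real_+^N$ the coordinates off the support impose no constraint.
  \end{itemize}
\end{itemize}

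What your route buys over the bare citation is transparency. It shows that (i), (iii), and (iv) hold for every PHOC mapping $G$, and that Assumption~\ref{asmp.compatible} is needed only for the positivity in (ii). It also avoids any general perturbation theory for the cone spectral radius by exploiting the diagonal structure of $T_{G,\bm{x}}$.

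One point you should make explicit: facts (b)--(d) are established in \cite{lem13} for the cone spectral radius defined through Gelfand's limit. You therefore need to identify that quantity with the supremum of eigenvalues in Definition~\ref{def.nl_radius}. This follows from two ingredients:
\begin{itemize}
\item the attainment result \cite[Corollary 5.4.2]{lem13}, which the paper already quotes;
\item the elementary bound $\|T^k\|\ge\lambda^k$ for any eigenvalue $\lambda$ with a nonnegative unit eigenvector.
\end{itemize}
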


The following example provides typical instances of $\fnorm$ constructed from $F$ and $\|\cdot\|$ that appear in UatF-based instances of Problem \eqref{eq.sumrate.power} for cell-less networks (see Sect.~\ref{sect.wireless} for application perspective).

\begin{example}
\label{ex.maxlinearG}
Let $F\colon \real_+^N\to\real_{++}^N\colon \signal{x}\mapsto \signal{Mx}+\signal{u}$
for given $\signal{M}\in\real_{++}^{N\times N}$ and $\signal{u}\in \real_{++}^N$,
and $\|\cdot\|$ be a polyhedral order-preserving norm, i.e., there exist $L\in\Natural$ vectors  $\signal{a}_1,\ldots,\signal{a}_L$ in $\real^N_{+}\setminus\{\signal{0}\}$ satisfying $(\forall\signal{x}\in\real^N)~\|\signal{x}\|=\max_{l\in\{1,\ldots,L\}}\signal{a}_l^{\mathsf{T}}|\signal{x}|$, where 
$|\signal{x}| = (|x_n|)_{n \in \mathcal{N}}$.
In this case, Assumption \ref{asmp.compatible} holds, and $\fnorm$ is given by $\fnorm\colon \real_+^N\to\real_{+}^N\colon \signal{x}\mapsto \signal{Mx}+\signal{u}\|\signal{x}\|.$ 
Let $G := \fnorm$ and $\signal{M}_{l} := \signal{M}+\signal{u}\signal{a}_l^{\mathsf{T}} \in \real_{++}^{N \times N}$ for every $l \in \{1,\ldots,L\}$.
Then, for the functions $\spradG$ and $\varrho_{\signal{M}_l}~(l = 1,\ldots,L)$ given in Definition \ref{def:spradG}, we have
		 \begin{align}
		 \label{eq.rho.maxlin}
		 (\forall \signal{x} \in \real_{+}^{N})\quad\spradG(\signal{x}) = \max_{l\in\{1,\ldots,L\}}  \varrho_{\signal{M}_l}(\signal{x}).
		 \end{align}
Note that $\spradG$ also satisfies the properties stated in Fact \ref{fact.qnorm}(i)--(iv).
The mathematical claims in this example
can be proved by following the argument of \cite[Example 5]{renatoconvex}.
\end{example}

\begin{fact}[{\cite[Example 4]{renatoconvex}}]
\label{fact.linearG}
In the setting of Example \ref{ex.maxlinearG},
for every $l \in \{1,\ldots,L\}$, $G_l\colon \real_+^N\to\real_{+}^N\colon \signal{x}\mapsto \signal{M}_l\signal{x}$
satisfies the assumptions in Fact \ref{fact.qnorm}, and
$\varrho_{\signal{M}_l}$ satisfies the properties shown in Fact \ref{fact.qnorm}(i)--(iv).
\end{fact}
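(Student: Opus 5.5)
The plan is to show directly that each linear mapping $G_l$ belongs to the class covered by Fact \ref{fact.qnorm}. Concretely, I will exhibit a continuous mapping $F^{(l)}$ and an order-preserving norm $\|\cdot\|'$ satisfying Assumption \ref{asmp.compatible} such that $F^{(l)}_{\|\cdot\|'} = G_l$. Properties (i)--(iv) for $\varrho_{\signal{M}_l}$ then follow immediately from Fact \ref{fact.qnorm}.

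The naive choice $F(\signal{x}) = \signal{M}\signal{x}+\signal{u}$ with the norm $\signal{a}_l^{\mathsf{T}}|\signal{x}|$ does not work, since the latter need not be a norm when $\signal{a}_l$ has zero entries. I will instead exploit that $\signal{M}_l \in \real_{++}^{N\times N}$. Let $c := \min_{n,j} [\signal{M}_l]_{n,j} > 0$, take $\|\cdot\|' := \|\cdot\|_\infty$ (which is order-preserving), and for each $n$ let $\mathcal{Y}_n$ be a singleton with $u_{n,y} := c$ and
\begin{align*}
g_{n}(\signal{x}) := (\signal{M}_l\signal{x})_n - c\max_{j\in\mathcal{N}} x_j .
\end{align*}
With these choices, $f_n(\signal{x}) = g_n(\signal{x}) + c$ and $(f_n)_{\|\cdot\|'}(\signal{x}) = g_n(\signal{x}) + c\|\signal{x}\|_\infty = (\signal{M}_l\signal{x})_n$ on $\real_+^N$. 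Hence $F^{(l)}_{\|\cdot\|'} = G_l$, and the continuity requirements in Assumption \ref{asmp.compatible} are trivially met.

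The only real work is checking that each $g_n$ is PHOC with values in $\real_+$. Continuity and positive homogeneity are clear. For nonnegativity, $(\signal{M}_l\signal{x})_n \ge c\sum_j x_j \ge c\max_j x_j$ on $\real_+^N$.

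Order preservation is the step I expect to need the most care. I will argue that increasing a single coordinate $x_j$ by $t>0$ raises $(\signal{M}_l\signal{x})_n$ by $[\signal{M}_l]_{n,j}\,t \ge ct$, while $\max_j x_j$ increases by at most $t$. Hence $g_n$ is nondecreasing in each coordinate, and the general case $\signal{x}\le\signal{y}$ follows by changing coordinates one at a time.

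Finally, $f_n = g_n + c \ge c > 0$, so $F^{(l)}$ maps into $\real_{++}^N$ and the constants $u_{n,y}=c$ are bounded away from zero. Thus Assumption \ref{asmp.compatible} holds, so $G_l = F^{(l)}_{\|\cdot\|'}$ satisfies the hypotheses of Fact \ref{fact.qnorm}, and $\varrho_{\signal{M}_l}$ inherits (i)--(iv).
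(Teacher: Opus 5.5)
Your construction is correct. With $c=\min_{n,j}[\mathbf{M}_l]_{n,j}>0$, each $g_n$ is PHOC, Assumption~\ref{asmp.compatible} holds, and $F^{(l)}_{\|\cdot\|_\infty}=G_l$ on $\real_+^N$, so Fact~\ref{fact.qnorm} applies. Your coordinate-by-coordinate monotonicity argument is sound: raising $x_j$ by $t$ raises $(\mathbf{M}_l\mathbf{x})_n$ by at least $ct$ and $\max_j x_j$ by at most $t$.

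The paper gives no argument of its own here; it imports the claim from \cite[Example 4]{renatoconvex}. What you supply is therefore a self-contained verification, not a variant of an in-text proof. Your remark that the naive pair $(\mathbf{x}\mapsto\mathbf{M}\mathbf{x}+\mathbf{u},\ \mathbf{a}_l^{\mathsf T}|\cdot|)$ fails is the crux: $\mathbf{a}_l^{\mathsf T}|\cdot|$ is only a seminorm, for example when $\mathbf{a}_l$ is a scaled unit vector as in the cell-less model. That is what forces a re-decomposition, and positivity of $\mathbf{M}_l$ is what makes one possible.

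A slightly cleaner version of your route uses $\|\cdot\|_1$ instead of $\|\cdot\|_\infty$. Then $g_n(\mathbf{x})=\bigl((\mathbf{M}_l-c\mathbf{1}\mathbf{1}^{\mathsf T})\mathbf{x}\bigr)_n$ is a linear functional with nonnegative coefficients. All PHOC properties, including order preservation, become immediate, and no one-coordinate-at-a-time argument is needed.

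Properties (i)--(iv) of $\varrho_{\mathbf{M}_l}$ alone could also be checked directly from Perron--Frobenius theory:
\begin{itemize}
\item[(i)] homogeneity is immediate;
\item[(ii)] $\rho(\mathrm{diag}(\mathbf{x})\mathbf{M}_l)\ge x_n[\mathbf{M}_l]_{n,n}>0$ whenever $x_n>0$;
\item[(iii)] the spectral radius is monotone on nonnegative matrices;
\item[(iv)] eigenvalues depend continuously on the matrix entries.
\end{itemize}
However, the first half of the statement, that $G_l$ meets the hypotheses of Fact~\ref{fact.qnorm}, genuinely requires a construction like yours.
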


\section{Proposed method}
\label{sect.optimization}
In this section, we present the proposed algorithm
that provably converges to the global optimal value of
Problem \eqref{eq.sumrate.rate} if the composite function $\spradG\circ\mathfrak{E}\colon\real_{+}^{N}\to\real_{+}$ is convex.
Based on the discussion in Sect.~\ref{sect.general.sumrateprob},
we focus on the following class of PHOC mappings $G$ in Problem \eqref{eq.sumrate.rate}.

\begin{assumption}
\label{asmp.G.hypTnorm}
We assume that $G =\fnorm$ holds for some PHOC mapping $\fnorm\colon \real_+^N\to\real_{+}^N$ constructed as shown in Fact \ref{fact.compatible}
from a SSOC mapping $F$ and an order-preserving norm $\|\cdot\|$ that satisfy Assumption \ref{asmp.compatible}.
\end{assumption}
In Sect.~\ref{sect.reformulation},
we reformulate Problem \eqref{eq.sumrate.rate}
involving functions well defined only on $\real_{+}^{N}$
to an equivalent optimization problem composed of functions from $\real^{N}$ to $\real$.
Then, in Sect.~\ref{sect.baseAlg},
we derive the proposed algorithm by applying the HSD method
to the reformulated problem, and then prove its convergence to
the global optimum of Problem \eqref{eq.sumrate.rate}
under the convexity condition.
In Sect.~\ref{sect.concAlg},
by focusing on the models given in Example \ref{ex.maxlinearG},
we present a low-complexity method for computing the subgradient projection used in the proposed algorithm.

\subsection{Reformulation to a problem over Euclidean spaces}
\label{sect.reformulation}
The nonlinear spectral radius function $\spradG$ is well defined only on $\real_{+}^{N}$ because of Definitions \ref{def.nl_radius} and \ref{def:spradG}.
As a result,
most convex optimization frameworks, including the HSD method,
are not directly applicable to Problem \eqref{eq.sumrate.rate} since these frameworks are designed for functions defined on the entire vector space.
Straightforward extensions of the functions in Problem \eqref{eq.sumrate.rate} to functions taking the extended value $+\infty$
on $\real^{N} \setminus \real_{+}^{N}$
are not suitable to address Problem \eqref{eq.sumrate.rate} because
the subgradient projection mappings are guaranteed to be well defined only
for convex functions taking real values on the whole space $\real^{N}$
(see Definition \ref{def.subgradProj} and Fact \ref{fact.boundedSubgrad}).
Real-valued convex extensions are also necessary to
satisfy the assumptions in Fact \ref{fact.hsdmsp} that shows convergence of the HSD method with subgradient projections.
Note that, to apply Fact \ref{fact.hsdmsp}, we need to translate
Problem \eqref{eq.sumrate.rate} into an equivalent problem that satisfies
all assumptions in Fact \ref{fact.hsdmsp}.

We first show that the composite function
$\spradG \circ \mathfrak{E} \colon \real^N_+\to\real_+$ can be extended
to a convex function from $\real^{N}$ to $\real$ under the following assumption
(see Sect.~\ref{sect.wireless} for the validity of this assumption in wireless applications):

\begin{assumption}
\label{asmp.convex.extension.rho.exp}
	In addition to Assumption \ref{asmp.G.hypTnorm},
	we assume that $\spradG \circ \mathfrak{E}\colon \real_{+}^{N}\to\real_{+}$ is convex on $\real_{++}^{N}$, where $\spradG$ and 
	$\mathfrak{E}$ are
	given in Definition \ref{def:spradG} and \eqref{eq.def.VecExp}, respectively.
\end{assumption}

\begin{lemma}
\label{lemma.convext.rhoexp}
Define
$$
h_{G, \mathfrak{E}} \colon \real^{N}\to\real_{+}\colon\bm{r}\mapsto (\spradG \circ \mathfrak{E} \circ P_{\mathbb{R}_{+}^{N}})(\bm{r}).
$$
Then, under Assumption \ref{asmp.convex.extension.rho.exp}, each of the following holds:
\begin{enumerate}[\itemsep=2pt]
\item[(i)] $(\forall \signal{r} \in \real_{+}^{N})\quad h_{G, \mathfrak{E}}(\signal{r}) = (\spradG \circ \mathfrak{E})(\signal{r})$; and
\item[(ii)] $h_{G, \mathfrak{E}}$ is convex on $\real^{N}$.
\end{enumerate}
\end{lemma}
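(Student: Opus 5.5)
Part (i) is immediate: for $\signal{r}\in\real_+^N$ we have $P_{\real_+^N}(\signal{r})=\signal{r}$, since the projection onto the nonnegative orthant is the coordinatewise map $r_n\mapsto\max\{r_n,0\}$. Hence $h_{G,\mathfrak{E}}(\signal{r})=(\spradG\circ\mathfrak{E})(\signal{r})$.

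For (ii), I first extend convexity of $\phi:=\spradG\circ\mathfrak{E}$ from $\real_{++}^N$ to the closed orthant $\real_+^N$ by continuity. The map $\mathfrak{E}$ is continuous, and $\spradG$ is continuous on $\real_+^N$ by Fact \ref{fact.qnorm}(iv) (Assumption \ref{asmp.G.hypTnorm} is in force). Given $\signal{x},\signal{y}\in\real_+^N$ and $\alpha\in(0,1)$, I apply the convexity inequality to $\signal{x}+\epsilon\signal{1}$ and $\signal{y}+\epsilon\signal{1}$, which lie in $\real_{++}^N$, and let $\epsilon\downarrow0$. This gives convexity of $\phi$ on $\real_+^N$.

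Next I note that $\phi$ is order-preserving on $\real_+^N$: $\mathfrak{E}$ is coordinatewise increasing and $\spradG$ is order-preserving by Fact \ref{fact.qnorm}(iii). Writing $\signal{r}^+:=P_{\real_+^N}(\signal{r})=(\max\{r_n,0\})_{n\in\mathcal{N}}$, each coordinate map $r_n\mapsto\max\{r_n,0\}$ is convex. So for $\signal{r},\signal{t}\in\real^N$ and $\alpha\in(0,1)$,
\begin{align*}
(\alpha\signal{r}+(1-\alpha)\signal{t})^+\le\alpha\signal{r}^++(1-\alpha)\signal{t}^+,
\end{align*}
and both sides lie in $\real_+^N$. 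Monotonicity of $\phi$ followed by its convexity on $\real_+^N$ then yields
\begin{align*}
h_{G,\mathfrak{E}}(\alpha\signal{r}+(1-\alpha)\signal{t})\le\phi(\alpha\signal{r}^++(1-\alpha)\signal{t}^+)\le\alpha h_{G,\mathfrak{E}}(\signal{r})+(1-\alpha)h_{G,\mathfrak{E}}(\signal{t}).
\end{align*}
This is the standard fact that a nondecreasing convex function composed with coordinatewise convex maps is convex.

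The only delicate point is the boundary extension: Assumption \ref{asmp.convex.extension.rho.exp} gives convexity only on the open orthant, so I need continuity of $\spradG$ up to the boundary, which Fact \ref{fact.qnorm}(iv) supplies. Everything else is routine.
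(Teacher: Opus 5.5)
Your proposal is correct and takes essentially the same route as the paper. Both arguments extend convexity of $\spradG\circ\mathfrak{E}$ from $\real_{++}^N$ to $\real_+^N$ by continuity (Fact \ref{fact.qnorm}(iv)), then combine coordinatewise convexity of $P_{\mathbb{R}_{+}^{N}}$ with order-preservation of $\spradG\circ\mathfrak{E}$; your explicit $\epsilon\signal{1}$-shift argument fills in the boundary-extension step that the paper only asserts.
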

\begin{proof}
(i) Clear from the definition of $h_{G, \mathfrak{E}}$.

(ii) We recall that Fact \ref{fact.qnorm} holds under Assumption \ref{asmp.G.hypTnorm} required by Assumption \ref{asmp.convex.extension.rho.exp}.
Since $\spradG \circ \mathfrak{E}$ is continuous on $\real_{+}^{N}$ by Fact \ref{fact.qnorm}(iv) and the definition of $\mathfrak{E}$ in \eqref{eq.def.VecExp}, convexity of $\spradG \circ \mathfrak{E}$
on $\real_{++}^{N}$ in Assumption \ref{asmp.convex.extension.rho.exp} implies its convexity on $\real_{+}^{N}$.
Choose $\signal{r}_1 \in \real^{N}$, $\signal{r}_2 \in \real^{N}$, and $\alpha \in (0, 1)$ arbitrarily.
By definition of $P_{\mathbb{R}_{+}^{N}}\colon \real^{N}\to\real_{+}^{N}$,
we have $P_{\mathbb{R}_{+}^{N}}(\signal{r}) = (\max \{0,  r_n\})_{n \in \mathcal{N}}$ for every $\signal{r} \in \real^{N}$. Hence, convexity of the function $\real\to\real\colon x \mapsto \max\{0, x\}$ implies convexity of
$P_{\mathbb{R}_{+}^{N}}$ in each coordinate:
\begin{align*}
P_{\mathbb{R}_{+}^{N}}(\alpha \signal{r}_1 + (1-\alpha)\signal{r}_2)
\le \alpha P_{\mathbb{R}_{+}^{N}}(\signal{r}_1) + (1-\alpha) P_{\mathbb{R}_{+}^{N}}(\signal{r}_2).
\end{align*}
Meanwhile, since $\spradG$ and $\mathfrak{E}$ are order-preserving by
Fact \ref{fact.qnorm}(iii) and by definition, respectively, their composition
$\spradG \circ \mathfrak{E}$ is also order-preserving, i.e.,
$$
(\forall \signal{x} \in \real_{+}^{N})(\forall \signal{y} \in \real_{+}^{N})~
\signal{x} \le \signal{y} 
\Rightarrow (\spradG \circ \mathfrak{E})(\signal{x}) \le (\spradG \circ \mathfrak{E})(\signal{y}).
$$
Altogether, convexity of $h_{G, \mathfrak{E}}$ on $\real^{N}$ is shown by
\begin{align*}
(\spradG &\circ \mathfrak{E})(P_{\mathbb{R}_{+}^{N}}(\alpha \signal{r}_1 + (1-\alpha)\signal{r}_2))\\
&\le (\spradG\circ \mathfrak{E}) (\alpha P_{\mathbb{R}_{+}^{N}}(\signal{r}_1) + (1-\alpha) P_{\mathbb{R}_{+}^{N}}(\signal{r}_2))\\
&\le \alpha (\spradG\circ \mathfrak{E})(P_{\mathbb{R}_{+}^{N}}(\signal{r}_1))
+(1-\alpha) (\spradG \circ \mathfrak{E})(P_{\mathbb{R}_{+}^{N}}(\signal{r}_2)),
\end{align*}
where the last inequality follows from convexity of $\spradG \circ \mathfrak{E}$ on $\real_{+}^{N}$.
\end{proof}

Meanwhile, the cost function $\bm{r}\mapsto-\signal{w}^{\mathsf{T}}\signal{r}$ in Problem \eqref{eq.sumrate.rate}
can be regarded as a function from $\real^{N}$ to $\real$.
We state the following trivial lemma for later reference.
\begin{lemma}
\label{lemma.LipGradCost.rate}
Let $\Psi\colon \real^{N}\to\real\colon \signal{r}\mapsto-\signal{w}^{\mathsf{T}}\signal{r}$. Then
$\Psi$ is convex and differentiable on $\real^{N}$. Its gradient is given by\linebreak[4]
$(\forall \signal{r} \in \real^{N})\quad\nabla \Psi(\signal{r}) = -\signal{w}$,
and hence it is Lipschitz continuous on $(\real^{N}, \|\cdot \|_2)$.
\end{lemma}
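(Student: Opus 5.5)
The statement to prove is Lemma~\ref{lemma.LipGradCost.rate}, which is elementary. The plan is to verify each of its three claims directly from the definition of $\Psi$ as a linear functional on $\real^{N}$.

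\textbf{Convexity.} For any $\signal{r}_1,\signal{r}_2\in\real^{N}$ and $\alpha\in(0,1)$, linearity gives
$$\Psi(\alpha\signal{r}_1+(1-\alpha)\signal{r}_2)=\alpha\Psi(\signal{r}_1)+(1-\alpha)\Psi(\signal{r}_2).$$
So the convexity inequality holds with equality.

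\textbf{Differentiability and the gradient.} Using Definition~\ref{def.differentiable} (G\^{a}teaux differentiability), I will fix $\signal{r},\signal{d}\in\real^{N}$ and $t\neq 0$. The difference quotient is
$$\frac{\Psi(\signal{r}+t\signal{d})-\Psi(\signal{r})}{t}=-\signal{w}^{\mathsf{T}}\signal{d}=\langle -\signal{w},\signal{d}\rangle.$$
This does not depend on $t$, so the limit as $t\to 0$ exists. It is linear and continuous in $\signal{d}$, which shows $\nabla\Psi(\signal{r})=-\signal{w}$ for every $\signal{r}$. The same computation also yields Fr\'echet differentiability, in case the appendix definition requires it.

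\textbf{Lipschitz continuity of the gradient.} The gradient is constant, so
$$\|\nabla\Psi(\signal{r}_1)-\nabla\Psi(\signal{r}_2)\|_2=0\le\kappa\|\signal{r}_1-\signal{r}_2\|_2$$
for any $\kappa>0$, for instance $\kappa=1$. Hence $\nabla\Psi$ is Lipschitz continuous on all of $\real^{N}$, and in particular on any compact $\mathcal{K}$ as required in Fact~\ref{fact.hsdmsp}(a2).

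There is no genuine obstacle. The only point to watch is that the Lipschitz definition in Sect.~\ref{sect.pre.convex} requires $\kappa>0$, so a positive constant must be chosen rather than $\kappa=0$.
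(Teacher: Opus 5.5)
Your proposal is correct: the paper calls this lemma trivial and gives no proof, and your direct verification is the intended argument. It covers equality in the convexity inequality from linearity, a constant difference quotient giving $\nabla\Psi(\signal{r})=-\signal{w}$, and a constant gradient being Lipschitz for any $\kappa>0$.
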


Under Assumption \ref{asmp.convex.extension.rho.exp},
using $h_{G, \mathfrak{E}}$ given in Lemma \ref{lemma.convext.rhoexp}
and $\Psi$ given in Lemma \ref{lemma.LipGradCost.rate}, we reformulate
Problem \eqref{eq.sumrate.rate} to
\begin{align}
	\label{eq.refsumrate.frrate}\tag{R'}
	 \begin{array}{cl}
		\text{minimize} & \Psi(\signal{r}) \\
		\text{subject~to} & \signal{r}\in [0, b]^{N}\cap
		\mathrm{lev}_{\le 1}(h_{G, \mathfrak{E}}),
	\end{array}
\end{align}
where we also replace $\real_{+}^{N}$ with $[0, b]^{N}$ using a sufficiently large constant $b > 0$
in order to avoid technical digressions of little practical relevance.
Under Assumption \ref{asmp.convex.extension.rho.exp}, if $b$ is large enough,
Problems \eqref{eq.sumrate.rate} and \eqref{eq.refsumrate.frrate}
are guaranteed to have the same nonempty solution set.
To prove this result, we use the following lemma.

\begin{lemma}
\label{lemm.bounded}
The set $\{\signal{x} \in \real_{+}^{N} \mid \spradG(\signal{x}) \leq 1\}$ is bounded
under Assumption \ref{asmp.G.hypTnorm}.
\end{lemma}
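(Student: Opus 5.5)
We derive boundedness from the properties of $\spradG$ in Fact \ref{fact.qnorm}, using a standard homogeneity-plus-compactness argument. Let $\Delta := \{\signal{x}\in\real_+^N \mid \|\signal{x}\|_2 = 1\}$. This set is compact, being closed and bounded by Heine--Borel. It does not contain $\signal{0}$.

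First, by Fact \ref{fact.qnorm}(iv), $\spradG$ is continuous on $\real_+^N$, so it attains its minimum on the compact set $\Delta$; call this minimum $c := \min_{\signal{x}\in\Delta}\spradG(\signal{x})$. By Fact \ref{fact.qnorm}(ii), $\spradG(\signal{x}) > 0$ for every $\signal{x}\in\Delta$, since such $\signal{x}$ are nonzero. Because the minimum is attained at some point of $\Delta$, we get $c > 0$.

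Next, take any $\signal{x}\in\real_+^N\setminus\{\signal{0}\}$ with $\spradG(\signal{x})\le 1$. Then $\signal{x}/\|\signal{x}\|_2\in\Delta$. Positive homogeneity in Fact \ref{fact.qnorm}(i) gives
\begin{align*}
1 \ge \spradG(\signal{x}) = \|\signal{x}\|_2\,\spradG(\signal{x}/\|\signal{x}\|_2) \ge c\,\|\signal{x}\|_2 .
\end{align*}
Hence $\|\signal{x}\|_2 \le 1/c$. This bound also holds trivially for $\signal{x}=\signal{0}$, so the set is contained in the Euclidean ball of radius $1/c$, which proves boundedness.

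There is no real obstacle here. The only point needing care is that strict positivity of $c$ requires both continuity (so the infimum is attained) and Fact \ref{fact.qnorm}(ii) (so the attained value is positive). Both are available under Assumption \ref{asmp.G.hypTnorm} through Fact \ref{fact.qnorm}.
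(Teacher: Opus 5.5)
Your proof is correct and takes essentially the same approach as the paper: both combine compactness of the unit-norm vectors in $\real_+^N$ with Fact~\ref{fact.qnorm}(i), (ii), and (iv). The paper argues by contradiction, applying Bolzano--Weierstrass to a normalized unbounded sequence, while you use the extreme value theorem directly to get the explicit bound $\|\signal{x}\|_2 \le 1/c$, which is the same argument in a cleaner form.
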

\begin{proof}
Assume for the sake of contradiction that the set $S_G :=\{\signal{x} \in \real_{+}^{N} \mid \spradG(\signal{x}) \leq 1\}$ is unbounded. In this case, there exists a sequence $(\signal{x}_n)_{n\in\Natural}$ in $S_G\setminus \{\signal{0}\}$ such that $\lim_{n\to\infty}\|\signal{x}_n\|_2=\infty$. For every $n\in\Natural$, define $\signal{d}_n:=\signal{x}_n/\|\signal{x}_n\|_2$. Since $(\signal{d}_n)_{n\in\Natural}$ is a bounded sequence, we can use the Bolzano-Weierstrass theorem to extract a convergent subsequence $(\signal{d}_n)_{n\in K\subset\Natural}$ from $(\signal{d}_n)_{n\in\Natural}$. Denote by $\signal{d}:=\lim_{n\in K} \signal{d}_n$ the limit of the (sub)sequence $(\signal{d}_n)_{n\in K}$, and note that $\|\signal{d}\|_2=1$ because,
for every $n \in K$, $\signal{d}_n$ belongs to the set $\{\signal{u}\in\real^N\mid\|\signal{u}\|_2=1\}$, which is closed by continuity of the norm.
 Therefore, in light of Fact \ref{fact.qnorm}(ii) and (iv), we deduce $\lim_{n\in K} \spradG(\signal{d}_n) = \spradG(\signal{d})>0$, which enables us to obtain the contradiction 
\begin{multline*}
1\overset{\text{(a)}} {\ge}\limsup_{n\in \Natural} \spradG(\signal{x}_n)\ge \limsup_{n\in K} \spradG(\|\signal{x}_n\|_2~\signal{d}_n) \\ \overset{\text{(b)}}{=} \limsup_{n\in K} \|\signal{x}_n\|_2~\spradG(\signal{d}_n) = \infty,
\end{multline*}
where (a) follows from $(\forall n\in\Natural)~\signal{x}_n\in S_G$ and (b) follows from the positive homogeneity of $\spradG$ [see Fact \ref{fact.qnorm}(i)].
\end{proof}

\begin{proposition}
\label{prop.existence.equivalence.prob.rate}
Under Assumption \ref{asmp.convex.extension.rho.exp},
the solution set of Problem \eqref{eq.refsumrate.frrate} is a nonempty compact convex set for any $b > 0$.
Moreover, if $b$ is large enough,
the solution sets of Problems \eqref{eq.sumrate.rate} and \eqref{eq.refsumrate.frrate}
are the same.
\end{proposition}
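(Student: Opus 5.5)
The plan is to establish three things in turn: that the feasible set of Problem \eqref{eq.refsumrate.frrate} is a nonempty compact convex set, that the solution set inherits these properties, and that for large $b$ the box constraint becomes inactive.

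\emph{Feasible set.} Let $\mathcal{C}_b := [0,b]^N\cap \mathrm{lev}_{\le 1}(h_{G,\mathfrak{E}})$.
\begin{itemize}
\item By Lemma \ref{lemma.convext.rhoexp}(ii) and Fact \ref{fact.boundedSubgrad}, $h_{G,\mathfrak{E}}$ is convex and continuous on $\real^N$. Hence $\mathrm{lev}_{\le 1}(h_{G,\mathfrak{E}})$ is closed and convex.
\item Intersecting with the compact convex box $[0,b]^N$ makes $\mathcal{C}_b$ compact and convex.
\item $\mathcal{C}_b$ is nonempty because $\signal{0}\in\mathcal{C}_b$: indeed $h_{G,\mathfrak{E}}(\signal{0})=\spradG(\mathfrak{E}(\signal{0}))=\spradG(\signal{0})=0\le 1$ by Fact \ref{fact.qnorm}(ii).
\end{itemize}

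\emph{Solution set for any $b>0$.} The linear function $\Psi$ is continuous, so by Weierstrass it attains its minimum $\Psi^\star_b$ on $\mathcal{C}_b$. The solution set is $\mathcal{C}_b\cap\{\signal{r}\mid \Psi(\signal{r})\le \Psi^\star_b\}$. This is the intersection of a compact convex set with a closed half-space, so it is nonempty, compact and convex.

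\emph{Equivalence for large $b$.}
\begin{itemize}
\item By Lemma \ref{lemma.convext.rhoexp}(i), $\mathcal{R}=\real_+^N\cap\mathrm{lev}_{\le1}(h_{G,\mathfrak{E}})$, so $\mathcal{C}_b=\mathcal{R}\cap[0,b]^N$.
\item The key point is that $\mathcal{R}$ is bounded. If $\signal{r}\in\mathcal{R}$, then $\mathfrak{E}(\signal{r})\in\{\signal{x}\in\real_+^N\mid\spradG(\signal{x})\le1\}$.
\item By Lemma \ref{lemm.bounded}, this set lies in some box $[0,B]^N$. So every coordinate satisfies $e^{r_n}-1\le B$, i.e.\ $r_n\le\log(1+B)$.
\item Take $b\ge\log(1+B)$. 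Then $\mathcal{R}\subset[0,b]^N$, hence $\mathcal{C}_b=\mathcal{R}$.
\end{itemize}
Problems \eqref{eq.sumrate.rate} and \eqref{eq.refsumrate.frrate} then minimize the same function over the same set, so their solution sets coincide, and by the previous step they are nonempty.

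The only nontrivial step is the boundedness of $\mathcal{R}$. It reduces to Lemma \ref{lemm.bounded} through the monotone bijection $\mathfrak{E}$, so I expect no real obstacle. The one detail to handle is expressing the bound coordinatewise, which follows from the equivalence of norms on $\real^N$.
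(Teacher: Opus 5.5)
Your proposal is correct and follows essentially the same route as the paper: nonemptiness from $h_{G,\mathfrak{E}}(\signal{0})=0$, compactness and convexity from continuity of the convex extension, and boundedness of $\mathcal{R}$ via Lemma~\ref{lemm.bounded}, which makes the box inactive for large $b$. Your explicit coordinatewise bound $r_n\le\log(1+B)$ and your half-space description of the solution set simply spell out steps that the paper states more briefly.
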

\begin{proof}
Fix $b > 0$ arbitrarily.
By setting $\bm{r} = \bm{0}$ in Lemma \ref{lemma.convext.rhoexp}(i), we have
$h_{G, \mathfrak{E}}(\signal{0})
= (\spradG \circ \mathfrak{E})(\signal{0}) = \spradG(\signal{0}) = 0$,
implying that the set $[0, b]^{N}\cap\mathrm{lev}_{\le 1}(h_{G, \mathfrak{E}})$ is nonempty.
Since $h_{G, \mathfrak{E}}\colon\real^N\to\real$ is convex by Lemma \ref{lemma.convext.rhoexp}(ii)
and hence continuous by Fact \ref{fact.boundedSubgrad}, the level set $\mathrm{lev}_{\le 1}(h_{G, \mathfrak{E}})$ is closed and convex.
Therefore, the set $[0, b]^{N}\cap\mathrm{lev}_{\le 1}(h_{G, \mathfrak{E}})$
is compact and convex.
Since $\Psi\colon\real^N\to\real$ given in Lemma \ref{lemma.LipGradCost.rate} is also convex, we deduce that
the solution set of Problem \eqref{eq.refsumrate.frrate} is a nonempty compact convex set.

Next, we show that
the solution sets of Problems \eqref{eq.sumrate.rate} and \eqref{eq.refsumrate.frrate}
are the same for sufficiently large $b$.
By definition of $\mathfrak{E}$ in \eqref{eq.def.VecExp},
if $\bm{r} \in \real_{+}^{N}$
and $\|\bm{r}\|_{2} \to \infty$,
we have $\mathfrak{E}(\bm{r}) \in \real_{+}^{N}$
and $\|\mathfrak{E}(\bm{r})\|_{2} \to \infty$.
This relation and Lemma \ref{lemm.bounded} imply that
$\mathcal{R}$ in \eqref{eq.def.regionrate} is bounded.
Thus,
since $\mathcal{R} \subset \real_{+}^{N}$ holds by definition,
there exists $c > 0$ such that
\begin{align*}
(\forall b \ge c)\quad \mathcal{R} = [0, b]^{N} \cap \mathcal{R}
= [0, b]^{N}\cap\mathrm{lev}_{\le 1}(h_{G,\mathfrak{E}}),
\end{align*}
where the last equality holds by Lemma \ref{lemma.convext.rhoexp}(i)
and the definition of $\mathcal{R}$.
Hence, for every $b \geq c$, the desired result follows from the definition of $\Psi$.
\end{proof}

\subsection{Optimization algorithm and its convergence guarantee}
\label{sect.baseAlg}
The reformulated problem \eqref{eq.refsumrate.frrate} is still challenging to solve because the projection onto the level set $\mathrm{lev}_{\le 1}(h_{G, \mathfrak{E}})$
is difficult to compute owing to the nonlinear spectral radius function
$\spradG$ involved in $h_{G, \mathfrak{E}}$. It is also difficult to decompose $\spradG$ into simpler functions that can be handled by the advanced splitting strategies reviewed in Sect.~\ref{sect.intro.related}.
Fortunately,
as shown in Sect.~\ref{sect.concAlg},
the subgradient projection mapping relative to $h_{G, \mathfrak{E}}$
can be computed efficiently in (pseudo) closed-form in the scenarios considered in Example \ref{ex.maxlinearG}.
Motivated by this result, we derive the proposed algorithm by applying the subgradient-projection-based HSD method shown in Fact \ref{fact.hsdmsp} to Problem \eqref{eq.refsumrate.frrate}.

Specifically, with an initial point $\signal{r}_{1} \in \real^{N}_{++}$
and a slowly decreasing step-size\footnote{%
We focus on a positive initial point and a positive step-size in order to derive an efficient subgradient computation method in Sect.~\ref{sect.concAlg}.} $(\mu_k)_{k\in\mathbb{N}} \subset \real_{++}^{N}$,
the proposed algorithm generates the sequence $(\signal{r}_{k})_{k \in \mathbb{N}}$ by
\begin{align}
\label{eq:HSDMmaxsumrate.rate}
\nonumber
&\text{For}~k = 1, 2, \ldots\\
&\left\lfloor\begin{aligned}
&\text{Choose}~\signal{g}_k \in \partial (h_{G, \mathfrak{E}})(\signal{r}_k),\\
&\hat{\signal{r}}_{k} := \begin{cases}
\bm{r}_{k} - (h_{G, \mathfrak{E}}(\signal{r}_k)-1)\signal{g}_k/\|\signal{g}_k\|_2^2,
& \text{if}~ h_{G, \mathfrak{E}}(\signal{r}_k) > 1;\\
\bm{r}_{k},
& \text{if}~ h_{G, \mathfrak{E}}(\signal{r}_k) \le 1,\\
\end{cases}\\
&\tilde{\signal{r}}_{k} := P_{[0,b]^{N}}(\hat{\signal{r}}_{k}),\\
&\signal{r}_{k+1} := \tilde{\signal{r}}_{k} - \mu_{k}\nabla\Psi(\tilde{\signal{r}}_{k}).
\end{aligned}\right.
\end{align}
Since $h_{G, \mathfrak{E}}$ is a convex function from $\real^{N}$ to $\real$ by Lemma \ref{lemma.convext.rhoexp},
the existence of $\signal{g}_k$ is guaranteed by Fact \ref{fact.boundedSubgrad} at each iteration $k \in \mathbb{N}$, while
a specific method for finding $\signal{g}_k$ is presented in Sect.~\ref{sect.concAlg}.
The projection mapping $P_{[0, b]^{N}}\colon \real^{N}\to[0, b]^{N}$ can be computed by
\begin{align}
\label{eq.def.projectionbox}
(\forall \signal{x} \in \real^{N})\quad P_{[0, b]^{N}}(\signal{x}) = (\min\{ \max\{0, x_n\}, b\})_{n \in \mathcal{N}},
\end{align}
and the expression for $\nabla\Psi$ is shown in Lemma \ref{lemma.LipGradCost.rate}.

In the following theorem, 
we show that, under Assumption \ref{asmp.convex.extension.rho.exp}, 
the distance between $(\signal{r}_{k})_{k \in \mathbb{N}}$ and the solution set of
Problem \eqref{eq.refsumrate.frrate} converges to zero,
and the weighted sum-rate $(\signal{w}^{\mathsf{T}} \signal{r}_{k})_{k \in \mathbb{N}}$ converges to (the negative of) the global optimal value of Problem \eqref{eq.sumrate.rate} --
and hence that of the original problem \eqref{eq.sumrate.power} --
if $b$ is large enough.

\begin{theorem}
\label{theo.converge.rate}
Let $\signal{w} \in \mathbb{R}_{++}^{N}$,
$b > 0$,
$\signal{r}_1 \in \real_{++}^{N}$, and
$(\mu_k)_{k\in\mathbb{N}} \subset \real_{++}^{N}$ satisfy
$\lim_{k\to\infty}\mu_k = 0$ and
$\sum_{k\in\mathbb{N}}\mu_k = \infty$.
Then, under Assumption \ref{asmp.convex.extension.rho.exp}, each of the following holds
for $(\signal{r}_{k})_{k \in \mathbb{N}}$ generated by \eqref{eq:HSDMmaxsumrate.rate}.
\begin{enumerate}[\itemsep=2pt]
\item[(i)] Let $\Omega := \argmin_{\signal{r} \in [0, b]^{N}\cap
		\mathrm{lev}_{\le 1}(h_{G, \mathfrak{E}})}\Psi(\signal{r})$. Then $\lim_{k \to \infty}d_2(\signal{r}_{k}, \Omega) = 0$;
\item[(ii)] $(\forall k \in \mathbb{N})\quad\signal{r}_{k} \in \real_{++}^{N}$;
\item[(iii)] $\lim_{k \rightarrow \infty} d_2(\signal{r}_{k}, \mathcal{R}) = 0$; and
\item[(iv)] If $b$ in Problem \eqref{eq.refsumrate.frrate} is large enough, then
$\lim_{k \to \infty}\signal{w}^{\mathsf{T}}\signal{r}_k = \max_{\signal{r} \in \mathcal{R}}\signal{w}^{\mathsf{T}}\signal{r}$.
\end{enumerate}
\end{theorem}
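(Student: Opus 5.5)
Part (i) is a direct application of Fact~\ref{fact.hsdmsp}, with $h := h_{G,\mathfrak{E}}$, $\mathcal{K} := [0,b]^N$ and $\Theta := \Psi$. Condition (a1) holds for three reasons. First, $h_{G,\mathfrak{E}}$ is a real-valued convex function on $\real^N$ by Lemma~\ref{lemma.convext.rhoexp}(ii). Second, $[0,b]^N$ is compact and convex. Third, $\signal{0}\in[0,b]^N\cap\mathrm{lev}_{\le 1}(h_{G,\mathfrak{E}})$, since $h_{G,\mathfrak{E}}(\signal{0})=\spradG(\signal{0})=0$, as in the proof of Proposition~\ref{prop.existence.equivalence.prob.rate}. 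Condition (a2) is Lemma~\ref{lemma.LipGradCost.rate}. Iteration \eqref{eq:HSDMmaxsumrate.rate} is exactly the recursion of Fact~\ref{fact.hsdmsp} with the subgradient selection $\signal{g}_k$, so $d_2(\signal{r}_k,\Omega)\to 0$.

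Part (ii) goes by induction on $k$. The base case $\signal{r}_1\in\real^N_{++}$ holds by hypothesis. For the inductive step, $\tilde{\signal{r}}_k=P_{[0,b]^N}(\hat{\signal{r}}_k)\in\real^N_+$. Lemma~\ref{lemma.LipGradCost.rate} gives $\nabla\Psi=-\signal{w}$, so
\[
\signal{r}_{k+1}=\tilde{\signal{r}}_k+\mu_k\signal{w}.
\]
This vector lies in $\real^N_{++}$ because $\mu_k>0$ and $\signal{w}\in\real^N_{++}$. (The step-sizes are presumably scalars, despite the notation $\real^N_{++}$.)

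For part (iii), $\Omega\subset[0,b]^N\cap\mathrm{lev}_{\le1}(h_{G,\mathfrak{E}})$. By Lemma~\ref{lemma.convext.rhoexp}(i), any point of $[0,b]^N$ with $h_{G,\mathfrak{E}}\le 1$ satisfies $(\spradG\circ\mathfrak{E})(\signal{r})\le 1$ and $\signal{r}\in\real^N_+$, so it belongs to $\mathcal{R}$. Hence $\Omega\subset\mathcal{R}$, which gives $d_2(\signal{r}_k,\mathcal{R})\le d_2(\signal{r}_k,\Omega)\to0$ by (i). I expect this inclusion to be the only point needing any care, and it is immediate.

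For part (iv), take $b$ large enough that Proposition~\ref{prop.existence.equivalence.prob.rate} applies. Then $\Omega$ coincides with the solution set of Problem~\eqref{eq.sumrate.rate}, so $\signal{w}^{\mathsf T}\signal{r}^\star=\max_{\signal{r}\in\mathcal{R}}\signal{w}^{\mathsf T}\signal{r}=:v$ for every $\signal{r}^\star\in\Omega$. Since $\Omega$ is nonempty and compact, for each $k$ there is $\signal{r}_k^\star\in\Omega$ attaining $d_2(\signal{r}_k,\Omega)$. By Cauchy--Schwarz,
\[
|\signal{w}^{\mathsf T}\signal{r}_k-v|=|\signal{w}^{\mathsf T}(\signal{r}_k-\signal{r}_k^\star)|\le\|\signal{w}\|_2\,d_2(\signal{r}_k,\Omega)\to0
\]
by (i), which proves (iv).
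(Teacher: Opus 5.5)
Your proposal is correct and follows the paper's proof essentially step for step. You prove (i) by checking (a1)--(a2) of Fact~\ref{fact.hsdmsp} with $h=h_{G,\mathfrak{E}}$, $\mathcal{K}=[0,b]^N$, $\Theta=\Psi$; (ii) from $\tilde{\signal{r}}_k\in\real_+^N$ and $\signal{r}_{k+1}=\tilde{\signal{r}}_k+\mu_k\signal{w}$; (iii) from $\Omega\subset[0,b]^N\cap\mathrm{lev}_{\le 1}(h_{G,\mathfrak{E}})\subset\mathcal{R}$; and (iv) by Cauchy--Schwarz. The only cosmetic difference is in (iv): you take a nearest point of $\Omega$ by compactness, whereas the paper uses the projection $P_{\Omega}$, which is well defined because $\Omega$ is nonempty, closed and convex.
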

\begin{proof}
(i) To prove this claim via Fact \ref{fact.hsdmsp},
we show that conditions (a1) and (a2) in Fact \ref{fact.hsdmsp} are satisfied for
$h = h_{G, \mathfrak{E}}$, $\mathcal{K} = [0, b]^{N}$, and $\Theta = \Psi$ as follows.
For any $b > 0$, $[0, b]^{N}$ is a compact convex set.
By Lemma \ref{lemma.convext.rhoexp}, $h_{G, \mathfrak{E}}\colon \real^{N}\to\real$
is convex, and $[0, b]^{N}\cap\mathrm{lev}_{\le 1}(h_{G, \mathfrak{E}})$ is nonempty
(see the proof of Proposition \ref{prop.existence.equivalence.prob.rate}).
Hence, condition (a1) is verified.
Lemma \ref{lemma.LipGradCost.rate} ensures that condition (a2) is satisfied for $\Theta = \Psi$.

(ii) For every $k \in \mathbb{N}$, we have
$\tilde{\signal{r}}_{k} = P_{[0,b]^{N}}(\hat{\signal{r}}_{k}) \in \mathbb{R}_{+}^{N}$ in \eqref{eq:HSDMmaxsumrate.rate}. We also have
$(\forall \signal{r} \in \mathbb{R}^{N})~ -\nabla\Psi(\signal{r}) = \signal{w} \in \real_{++}^{N}$ and
$(\mu_k)_{k\in\mathbb{N}} \subset \real_{++}^{N}$ by assumption.
Hence, the last step in \eqref{eq:HSDMmaxsumrate.rate} ensures
$\signal{r}_{k+1} \in \real_{++}^{N}$ for every $k \in \mathbb{N}$.
Note that $\signal{r}_{1} \in \real_{++}^{N}$ holds by assumption.

(iii) Since $\mathcal{R} = \real_{+}^{N} \cap \mathrm{lev}_{\le 1}(h_{G, \mathfrak{E}})$ holds by
definition of $\mathcal{R}$ in \eqref{eq.def.regionrate} and
Lemma \ref{lemma.convext.rhoexp}(i),
we have $$\Omega \subset [0, b]^{N} \cap \mathrm{lev}_{\le 1}(h_{G, \mathfrak{E}}) 
\subset \mathcal{R}.$$
By definition of the distance $d_2$ in \eqref{eq.def.EucDist},
$\Omega \subset  \mathcal{R}$ implies $d_2(\signal{r}, \mathcal{R}) \leq d_2(\signal{r}, \Omega)$
for every $\signal{r} \in \real^{N}$, and hence the desired result follows from the claim in (i).

(iv) Since $\Omega$ is a nonempty closed convex set by Proposition \ref{prop.existence.equivalence.prob.rate},
the projection mapping $P_{\Omega}\colon \real^{N}\to\Omega$ is well defined.
Since the solution sets of Problems \eqref{eq.sumrate.rate} and \eqref{eq.refsumrate.frrate}
are the same for sufficiently large $b > 0$ by Proposition \ref{prop.existence.equivalence.prob.rate},
we have $\Omega = \argmax_{\signal{r} \in \mathcal{R}} \bm{w}^{\mathsf{T}} \signal{r}$ for such $b$.
Altogether, the desired result is shown by
\begin{align*}
&\lim_{k \to \infty} \bigl| \bm{w}^{\mathsf{T}}\signal{r}_k-\max_{\signal{r} \in \mathcal{R}} \bm{w}^{\mathsf{T}}\signal{r}\bigr| = \lim_{k \to \infty}| \bm{w}^{\mathsf{T}}\signal{r}_k- \bm{w}^{\mathsf{T}} P_{\Omega}(\signal{r}_k)| \\
 &\overset{\text{(a)}}\leq  \lim_{k \to \infty}\|\signal{w}\|_2\|\signal{r}_k- P_{\Omega}(\signal{r}_k)\|_2
= \|\signal{w}\|_2 \lim_{k \to \infty} d_2(\signal{r}_k, \Omega) \overset{\text{(b)}}= 0,
\end{align*}
where (a) and (b) follow from the Cauchy-Schwarz inequality
and the claim in (i), respectively.
\end{proof}
\begin{remark}
The condition for $(\mu_k)_{k\in\mathbb{N}}$ in Theorem \ref{theo.converge.rate}
is satisfied by, e.g., $(\forall k \in \mathbb{N})~\mu_{k} = ak^{-q}$ for any $a > 0$ and $q \in (0, 1]$.
\end{remark}

\subsection{A low-complexity method for computing subgradients}
\label{sect.concAlg}
To efficiently compute the steps of the proposed algorithm,
we only need an efficient method for finding a subgradient at the first step in \eqref{eq:HSDMmaxsumrate.rate}, since the other steps can be done in $\mathcal{O}(N)$ operations.
Since $(\forall k \in \mathbb{N})~\signal{r}_{k} \in \real_{++}^{N}$
is guaranteed in \eqref{eq:HSDMmaxsumrate.rate} by Theorem \ref{theo.converge.rate}(ii),
it is sufficient to find a subgradient of $h_{G, \mathfrak{E}}$ at a given point in $\mathbb{R}_{++}^{N}$.
To this end, we focus on the PHOC mappings $G$ considered in Example \ref{ex.maxlinearG},
and make the following additional assumption.

\begin{assumption}
\label{asmp.convex.rhoMl.exp}
	In addition to the setting of Example \ref{ex.maxlinearG},
we assume that
$\varrho_{\signal{M}_l} \circ \mathfrak{E}$
is convex on $\real_{++}^{N}$ for every $l \in \{1,\ldots,L\}$.
\end{assumption}
\begin{remark}
\label{remark.inverseZ}
By combining \cite[Theorem~4.3]{friedland81} and \cite[Proposition 6]{renatoconvex},
we deduce that Assumption \ref{asmp.convex.rhoMl.exp} holds if $\signal{M}_l$ is an \emph{inverse Z-matrix} \cite{nussbaum1986convexity,johnson2011} (i.e., $\signal{M}_l$ has an inverse matrix whose off-diagonal components are nonpositive) for every $l \in \{1,\ldots,L\}$.
However, this condition is just sufficient for Assumption \ref{asmp.convex.rhoMl.exp}:
$\varrho_{\signal{M}_l} \circ \mathfrak{E}$ can be convex even if  $\signal{M}_l$ is not an inverse Z-matrix.
For validity of Assumption \ref{asmp.convex.rhoMl.exp} in wireless applications, see Sect.~\ref{sect.wireless}.
\end{remark}

\begin{lemma}
\label{lemm.conv.maxlinG}
Consider the setting of Example \ref{ex.maxlinearG}. If Assumption \ref{asmp.convex.rhoMl.exp} holds,
then $G$ given in Example \ref{ex.maxlinearG} satisfies Assumption \ref{asmp.convex.extension.rho.exp}.
\end{lemma}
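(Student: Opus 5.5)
Assumption \ref{asmp.convex.extension.rho.exp} has two parts: Assumption \ref{asmp.G.hypTnorm}, and convexity of $\spradG\circ\mathfrak{E}$ on $\real_{++}^N$. I will check them in that order.

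For Assumption \ref{asmp.G.hypTnorm}, Example \ref{ex.maxlinearG} already says that $F\colon\signal{x}\mapsto\signal{Mx}+\signal{u}$ and the polyhedral order-preserving norm $\|\cdot\|$ satisfy Assumption \ref{asmp.compatible}. It also says that $G=\fnorm$ is the mapping $\signal{x}\mapsto\signal{Mx}+\signal{u}\|\signal{x}\|$. By Fact \ref{fact.compatible}, this $G$ is the PHOC mapping constructed from $F$ and $\|\cdot\|$, so Assumption \ref{asmp.G.hypTnorm} holds by definition.

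For convexity, the key tool is the max representation \eqref{eq.rho.maxlin} from Example \ref{ex.maxlinearG}. For every $\signal{r}\in\real_{++}^N$ we have $\mathfrak{E}(\signal{r})\in\real_{++}^N\subset\real_+^N$. Applying \eqref{eq.rho.maxlin} at the point $\signal{x}=\mathfrak{E}(\signal{r})$ gives
\begin{align*}
(\spradG\circ\mathfrak{E})(\signal{r})=\max_{l\in\{1,\ldots,L\}}(\varrho_{\signal{M}_l}\circ\mathfrak{E})(\signal{r}).
\end{align*}
Each function $\varrho_{\signal{M}_l}\circ\mathfrak{E}$ is convex on the convex set $\real_{++}^N$ by Assumption \ref{asmp.convex.rhoMl.exp}. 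A pointwise maximum of finitely many convex functions on a convex domain is convex. To see this, fix $\signal{r}_1,\signal{r}_2\in\real_{++}^N$ and $\alpha\in(0,1)$. For each $l$, convexity gives $(\varrho_{\signal{M}_l}\circ\mathfrak{E})(\alpha\signal{r}_1+(1-\alpha)\signal{r}_2)\le\alpha(\varrho_{\signal{M}_l}\circ\mathfrak{E})(\signal{r}_1)+(1-\alpha)(\varrho_{\signal{M}_l}\circ\mathfrak{E})(\signal{r}_2)$. Bounding each term on the right by its maximum over $l$, and then taking the maximum over $l$ on the left, yields the convexity inequality for $\spradG\circ\mathfrak{E}$. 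Hence $\spradG\circ\mathfrak{E}$ is convex on $\real_{++}^N$.

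There is no real obstacle here: the substantive content (the identity \eqref{eq.rho.maxlin} and the compatibility of $F$ with $\|\cdot\|$) is already provided by Example \ref{ex.maxlinearG}. The only point needing care is that \eqref{eq.rho.maxlin} holds on all of $\real_+^N$. It therefore applies at every point $\mathfrak{E}(\signal{r})$ with $\signal{r}\in\real_{++}^N$, which lies in the range where the per-$l$ convexity is assumed.
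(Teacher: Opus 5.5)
Your proposal is correct and follows the same route as the paper. Assumption \ref{asmp.G.hypTnorm} holds directly by the construction of $G$ in Example \ref{ex.maxlinearG}. Evaluating \eqref{eq.rho.maxlin} at $\mathfrak{E}(\signal{r})$ then writes $\spradG\circ\mathfrak{E}$ as a pointwise maximum of the functions $\varrho_{\signal{M}_l}\circ\mathfrak{E}$, each convex on $\real_{++}^N$ by Assumption \ref{asmp.convex.rhoMl.exp}. The only difference is that you spell out the elementary pointwise-maximum convexity argument, which the paper takes as standard.
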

\begin{proof}
Assumption \ref{asmp.G.hypTnorm}, which is required by Assumption \ref{asmp.convex.extension.rho.exp}, holds by definition of $G$ in Example \ref{ex.maxlinearG}.
By definition of $\mathfrak{E}$ in \eqref{eq.def.VecExp},
we have $\mathfrak{E}(\signal{x}) \in \real_{+}^{N}$ for every $\signal{x} \in \real_{+}^{N}$.
Hence, by the equality in \eqref{eq.rho.maxlin}, we have
\begin{align}
\label{eq.max.rhoexp}
(\forall \signal{x} \in \real_{+}^{N})\quad (\spradG \circ \mathfrak{E})(\signal{x})
= \max_{l\in\{1,\ldots,L\}} (\varrho_{\signal{M}_l} \circ \mathfrak{E})(\signal{x}).
\end{align}
This shows that $\spradG \circ \mathfrak{E}$ is the pointwise maximum of
 convex functions on $\real_{++}^{N}$, and hence $\spradG \circ \mathfrak{E}$ is convex on $\real_{++}^{N}$.
\end{proof}

Under Assumption \ref{asmp.convex.rhoMl.exp}, the following proposition provides an efficient way to compute a subgradient of $h_{G, \mathfrak{E}}$ at any point in the positive orthant.

\begin{proposition}
\label{prop.subgrad.rate.maxlinG}
Consider the setting of Example \ref{ex.maxlinearG}.
Fix $\signal{r} \in \real_{++}^{N}$ arbitrarily, and
let
$$l^{\star} 
\in \argmax_{l \in \{1,\ldots,L\}} (\varrho_{\signal{M}_l} \circ \mathfrak{E})(\signal{r})
= \argmax_{l \in \{1,\ldots,L\}}\rho(\mathrm{diag}(\mathfrak{E}(\signal{r})) \signal{M}_l).$$
Then, under Assumption \ref{asmp.convex.rhoMl.exp}, we have
\begin{align}
\label{eq:ex.subgrad.rate}
\frac{\mathrm{diag}((e^{r_n})_{n \in \mathcal{N}})\mathrm{diag}(\bm{\eta})\signal{M}_{l^\star}\bm{\xi}}{\bm{\eta}^{\mathsf{T}} \bm{\xi}} \in \partial h_{G, \mathfrak{E}}(\signal{r}),
\end{align}
where $\signal{\xi} \in \real_{++}^{N}$ and $\signal{\zeta} \in \real_{++}^{N}$
are positive right and left eigenvectors of $\mathrm{diag}(\mathfrak{E}(\signal{r}))\signal{M}_{l^\star}\in\real_{++}^{N \times N}$
corresponding to the spectral radius $\rho(\mathrm{diag}(\mathfrak{E}(\signal{r}))\signal{M}_{l^\star}) > 0$, i.e., positive vectors satisfying
$\mathrm{diag}(\mathfrak{E}(\signal{r}))\signal{M}_{l^\star}\signal{\xi} = \rho(\mathrm{diag}(\mathfrak{E}(\signal{r}))\signal{M}_{l^\star}) \signal{\xi}$
and $\signal{\eta}^{\mathsf{T}}\mathrm{diag}(\mathfrak{E}(\signal{r}))\signal{M}_{l^\star} = \rho(\mathrm{diag}(\mathfrak{E}(\signal{r}))\signal{M}_{l^\star}) \signal{\eta}^{\mathsf{T}}$, respectively.\footnote{\label{footnote.lreigenvec}%
Since $\mathrm{diag}(\mathfrak{E}(\signal{r}))\signal{M}_{l^\star}$ is a positive matrix,
$\rho(\mathrm{diag}(\mathfrak{E}(\signal{r}))\signal{M}_{l^\star})$ is identical to the spectral radius of $\mathrm{diag}(\mathfrak{E}(\signal{r}))\signal{M}_{l^\star}$ in standard linear algebra, and hence the existence of positive right and left eigenvectors is guaranteed by standard Perron-Frobenius theory (see, e.g., \cite[Theorem 8.2.8]{horn12}).}

\end{proposition}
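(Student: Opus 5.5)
The plan is to reduce the claim to a single index $l^\star$ and then use classical perturbation theory for a simple eigenvalue. Throughout, I read the left eigenvector denoted $\signal{\zeta}$ in the statement as the vector $\signal{\eta}$ appearing in \eqref{eq:ex.subgrad.rate}. For each $l\in\{1,\ldots,L\}$, define
$$\phi_l\colon\real^N\to\real_+\colon\signal{r}\mapsto(\varrho_{\signal{M}_l}\circ\mathfrak{E}\circ P_{\real_+^N})(\signal{r}).$$

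First, $\phi_l$ is convex on $\real^N$. By Fact~\ref{fact.linearG}, the linear mapping $G_l$ satisfies the assumptions of Fact~\ref{fact.qnorm}, and by Assumption~\ref{asmp.convex.rhoMl.exp}, $\varrho_{\signal{M}_l}\circ\mathfrak{E}$ is convex on $\real_{++}^N$. Hence the proof of Lemma~\ref{lemma.convext.rhoexp}(ii) applies verbatim with $G_l$ in place of $G$; it uses only continuity, order preservation, convexity on $\real_{++}^N$, and coordinatewise convexity of $P_{\real_+^N}$.

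Second, by \eqref{eq.max.rhoexp} and $P_{\real_+^N}(\signal{y})\in\real_+^N$, we have $h_{G,\mathfrak{E}}=\max_l\phi_l$ on all of $\real^N$. By the choice of $l^\star$, $h_{G,\mathfrak{E}}(\signal{r})=\phi_{l^\star}(\signal{r})$. So for any $\signal{u}\in\partial\phi_{l^\star}(\signal{r})$ and every $\signal{y}\in\real^N$,
$$h_{G,\mathfrak{E}}(\signal{y})\ge\phi_{l^\star}(\signal{y})\ge\phi_{l^\star}(\signal{r})+\langle\signal{y}-\signal{r},\signal{u}\rangle=h_{G,\mathfrak{E}}(\signal{r})+\langle\signal{y}-\signal{r},\signal{u}\rangle.$$
Therefore $\partial\phi_{l^\star}(\signal{r})\subset\partial h_{G,\mathfrak{E}}(\signal{r})$, and it suffices to identify an element of $\partial\phi_{l^\star}(\signal{r})$.

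Third, I will show that $\phi_{l^\star}$ is differentiable at $\signal{r}$ with gradient equal to the left side of \eqref{eq:ex.subgrad.rate}. Since $\real_{++}^N$ is open and $P_{\real_+^N}$ is the identity there, on a neighborhood of $\signal{r}$ we have $\phi_{l^\star}(\signal{y})=\lambda(A(\signal{y}))$, where $A(\signal{y}):=\mathrm{diag}(\mathfrak{E}(\signal{y}))\signal{M}_{l^\star}$ and $\lambda(\cdot)$ is the Perron root. For $\signal{y}\in\real_{++}^N$ the matrix $A(\signal{y})$ is entrywise positive, so by Perron--Frobenius theory \cite[Theorem 8.2.8]{horn12} its spectral radius is an algebraically simple eigenvalue with positive right and left eigenvectors $\signal{\xi}$ and $\signal{\eta}$. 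The standard simple-eigenvalue perturbation result (implicit function theorem applied to the characteristic polynomial, whose root is simple) then gives that $\signal{y}\mapsto\lambda(A(\signal{y}))$ is $C^1$ near $\signal{r}$, with
$$\frac{\partial\lambda}{\partial r_n}=\frac{\signal{\eta}^{\mathsf{T}}\,(\partial A/\partial r_n)\,\signal{\xi}}{\signal{\eta}^{\mathsf{T}}\signal{\xi}},$$
where $\signal{\eta}^{\mathsf{T}}\signal{\xi}>0$ by positivity. Since $\partial A/\partial r_n=e^{r_n}\signal{e}_n\signal{e}_n^{\mathsf{T}}\signal{M}_{l^\star}$, this yields $\partial\lambda/\partial r_n=e^{r_n}\eta_n(\signal{M}_{l^\star}\signal{\xi})_n/\signal{\eta}^{\mathsf{T}}\signal{\xi}$. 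Stacking over $n$ gives exactly the vector in \eqref{eq:ex.subgrad.rate}.

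Finally, a convex function on $\real^N$ that is differentiable at a point has its gradient in the subdifferential there; indeed, the subdifferential is then the singleton $\{\nabla\phi_{l^\star}(\signal{r})\}$. Combined with the second step, this proves the claim.

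The main obstacle is the third step, specifically justifying differentiability of the Perron root together with the derivative formula. I would handle it by citing simplicity of the Perron root for positive matrices and the classical eigenvalue-derivative formula for simple eigenvalues from Horn--Johnson. A fallback avoids full differentiability: it suffices to show directly that the claimed vector lies in $\partial\phi_{l^\star}(\signal{r})$, for which convexity of $\phi_{l^\star}$ plus existence of the one-sided directional derivatives given by the same formula is enough.
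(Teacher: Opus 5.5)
Your proof is correct and follows the paper's own argument. You convexly extend each $\varrho_{\bm{M}_l}\circ\mathfrak{E}$ via Lemma~\ref{lemma.convext.rhoexp}, write $h_{G,\mathfrak{E}}$ as the pointwise maximum of these extensions so that a subgradient of the active piece $l^\star$ is a subgradient of $h_{G,\mathfrak{E}}$, and identify that subgradient as the gradient of the Perron root of $\mathrm{diag}(\mathfrak{E}(\cdot))\bm{M}_{l^\star}$ at $\bm{r}\in\real_{++}^N$. The differences are minor. You check the max-subgradient inclusion directly instead of citing Fact~\ref{fact.subdifmaxconv}, and you get differentiability from the implicit function theorem on the characteristic polynomial instead of from Fact~\ref{fact.diff.rhodiagposM} plus the chain rule. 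On that route you should add one line: continuity of the spectral radius plus local uniqueness of the implicit-function root identify that root with the Perron root, which is the point the paper's Ger\v{s}gorin argument handles.
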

\begin{proof}
Since Lemma \ref{lemm.conv.maxlinG} shows that Assumption \ref{asmp.convex.rhoMl.exp} implies Assumption \ref{asmp.convex.extension.rho.exp},
we can construct $h_{G, \mathfrak{E}}$ defined in Lemma \ref{lemma.convext.rhoexp}.
Meanwhile, for every $l \in \{1,\ldots,L\}$,
$G_l\colon \real_+^N\to\real_{+}^N\colon \signal{x}\mapsto \signal{M}_l\signal{x}$
satisfies Assumption \ref{asmp.G.hypTnorm} by Fact \ref{fact.linearG}.
Owing to Assumption \ref{asmp.convex.rhoMl.exp},
$G_l$ satisfies the conditions in Assumption \ref{asmp.convex.extension.rho.exp}.
Hence, for every $l \in \{1,\ldots,L\}$,
applying Lemma \ref{lemma.convext.rhoexp}, we deduce that the function
$$
h_{G_l, \mathfrak{E}} \colon \real^{N}\to\real_{+}\colon \bm{x} \mapsto (\varrho_{\signal{M}_l} \circ \mathfrak{E} \circ P_{\real_{+}^{N}})(\bm{x})
$$
is convex on $\real^{N}$, and
\begin{align}
\label{eq.conext.rhoMl}
(\forall \signal{x} \in \real_{+}^{N})\quad h_{G_l, \mathfrak{E}}(\signal{x}) = (\varrho_{\signal{M}_l} \circ \mathfrak{E})(\signal{x}).
\end{align}
Since $(\forall \signal{x} \in \real^{N})~P_{\real_{+}^{N}}(\signal{x}) \in \real_{+}^{N}$ holds,
using the equality in \eqref{eq.max.rhoexp},
we further obtain
\begin{align}
\label{eq.fGEmax}
(\forall \signal{x} \in \real^{N}) \quad h_{G, \mathfrak{E}}(\signal{x}) = \max_{l \in \{1,\ldots,L\}} h_{G_l, \mathfrak{E}} (\signal{x}).
\end{align}
Now consider $\signal{r}$ and $l^{\star}$ given in the statement of this proposition.
Since $h_{G_l, \mathfrak{E}}$ is a convex function from $\real^{N}$ to $\real$ for every $l \in \{1,\ldots,L\}$,
$\signal{g} \in \partial h_{G_{l^\star}, \mathfrak{E}}(\signal{r})$
implies
$\signal{g} \in \partial h_{G, \mathfrak{E}}(\signal{r})$
by \eqref{eq.fGEmax} and Fact \ref{fact.subdifmaxconv} in Appendix \ref{appendix.proof.subgradient}.
Hence it is sufficient to show that the left-hand side of \eqref{eq:ex.subgrad.rate}
is a subgradient of $h_{G_{l^\star}, \mathfrak{E}}$ at $\signal{r}$.
By definition of $\mathfrak{E}$ in \eqref{eq.def.VecExp},
$\mathfrak{E}$ is Fr\'{e}chet differentiable at $\bm{r} \in \real_{++}^{N}$, and
$\mathfrak{E}(\signal{r}) \in \real_{++}^{N}$ holds.
Since $\signal{M}_{l^\star} \in \real_{++}^{N \times N}$ holds in the setting of Example \ref{ex.maxlinearG},
$\varrho_{\signal{M}_{l^\star}}$ is Fr\'{e}chet differentiable at $\mathfrak{E}(\signal{r}) \in \mathbb{R}_{++}^{N}$ by Fact \ref{fact.diff.rhodiagposM}.
Hence, by the chain rule (see, e.g.,~\cite[Fact 2.63]{baus17}),
$\varrho_{\signal{M}_{l^\star}} \circ \mathfrak{E}$
is (Fr\'{e}chet) differentiable at $\signal{r} \in \mathbb{R}_{++}^{N}$, and its gradient is given by
\begin{align*}\nabla (\varrho_{\signal{M}_{l^\star}} \circ \mathfrak{E})(\signal{r})
&= \mathrm{diag}((e^{r_n})_{n \in \mathcal{N}}) \nabla \varrho_{\signal{M}_{l^\star}} (\mathfrak{E}(\signal{r}))\\
&= \mbox{the left-hand side of \eqref{eq:ex.subgrad.rate}},
\end{align*}
where the last equality follows from the expression of $\nabla \varrho_{\signal{M}_{l^\star}}$ in Fact \ref{fact.diff.rhodiagposM}.
Since the equality in \eqref{eq.conext.rhoMl}
holds in a neighborhood of $\signal{r} \in \real_{++}^{N}$,
$\nabla (\varrho_{\signal{M}_{l^\star}} \circ \mathfrak{E})(\signal{r})$ is
the gradient of $h_{G_{l^\star}, \mathfrak{E}}$ at $\signal{r}$, and hence is a (unique) subgradient of $h_{G_{l^\star}, \mathfrak{E}}$ at $\signal{r}$ by \cite[Proposition~17.31]{baus17}.
\end{proof}

\begin{algorithm}[t]
\SetCommentSty{textrm}
%\DontPrintSemicolon
\setstretch{1.2}
\small
%\LinesNumbered
\caption{for Problem \eqref{eq.sumrate.rate} using Example \ref{ex.maxlinearG}}
\label{alg.rate.Gmaxlin}
\KwIn{$\signal{M}_l \in\real_{++}^{N\times N}~(l=1,\ldots,L)$, $\bm{w} \in \real_{++}^N$, $\signal{r}_1 \in \mathbb{R}_{++}^{N}$, $(\mu_k)_{k\in\mathbb{N}} \subset \real_{++}^{N}$
satisfying $\lim_{k\to\infty}\mu_k = 0$ and $\sum_{k\in\mathbb{N}}\mu_k = \infty$, and large enough $b > 0$.}
\For{$k = 1,2,\ldots$}{
	$\gamma_k = \max_{l \in \{1,\ldots,L\}}\rho(\mathrm{diag}(\mathfrak{E}(\signal{r}_k))\signal{M}_l)$\;
	Choose $l_{k}^{\star} \in \argmax_{l \in \{1,\ldots,L\}}\rho(\mathrm{diag}(\mathfrak{E}(\signal{r}_k))\signal{M}_l)$\;
	\If{$\gamma_k > 1$}{
		Find $\signal{\xi}_k \in \real_{++}^{N}$ s.t.
		$\mathrm{diag}(\mathfrak{E}(\signal{r}_k))\signal{M}_{l_{k}^{\star}}\signal{\xi}_k = 
		\gamma_k\signal{\xi}_k$\;
		Find $\signal{\eta}_k \in \real_{++}^{N}$ s.t.
		$\signal{\eta}_k^{\mathsf{T}}\mathrm{diag}(\mathfrak{E}(\signal{r}_k))\signal{M}_{l_{k}^{\star}} = 
		\gamma_k\signal{\eta}_k^{\mathsf{T}}$\;
		$\signal{g}_k =  (\mathrm{diag}((e^{r_n})_{n \in \mathcal{N}})\mathrm{diag}(\signal{\eta}_k)\signal{M}_{l_{k}^{\star}}\signal{\xi}_k)/(\signal{\eta}_k^{\mathsf{T}}\signal{\xi}_k)$\;
		$\hat{\signal{r}}_{k} = \signal{r}_k - (\gamma_k-1)\signal{g}_k/\|\signal{g}_k\|_2^2$\;
	}\Else{$\hat{\signal{r}}_{k} = \signal{r}_k$\;}
	$\tilde{\signal{r}}_{k} = P_{[0,b]^{N}}(\hat{\signal{r}}_{k})$\tcp*{See \eqref{eq.def.projectionbox}}
	$\signal{r}_{k+1} = \tilde{\signal{r}}_{k} - \mu_{k}\nabla\Psi(\tilde{\signal{r}}_{k})$\tcp*{See Lemma \ref{lemma.LipGradCost.rate}}
}
\end{algorithm}

\begin{remark}[Optimality of Algorithm \ref{alg.rate.Gmaxlin}]
\label{remark.optimality.rate}
Algorithm \ref{alg.rate.Gmaxlin} is a particular instance of the proposed algorithm \eqref{eq:HSDMmaxsumrate.rate}, obtained by employing
the subgradient characterized in Proposition \ref{prop.subgrad.rate.maxlinG}, which is valid
under Assumption \ref{asmp.convex.rhoMl.exp}.
Since Assumption \ref{asmp.convex.rhoMl.exp} implies
that $G$ given in Example \ref{ex.maxlinearG} satisfies
Assumption \ref{asmp.convex.extension.rho.exp} (see Lemma \ref{lemm.conv.maxlinG}),
convergence of
Algorithm \ref{alg.rate.Gmaxlin}
to the global optimal value of Problem \eqref{eq.sumrate.rate} -- and hence that of the original problem \eqref{eq.sumrate.power} -- is guaranteed under Assumption \ref{asmp.convex.rhoMl.exp} by Theorem \ref{theo.converge.rate}.

Meanwhile, convergence of Algorithm \ref{alg.sinr.Gmaxlin} -- developed in the Supplemental Material for Problem \eqref{eq.sumrate.sinr} -- to the global optimal value of Problem \eqref{eq.sumrate.power} is proved under the convexity of
$\varrho_{\signal{M}_l}\colon\real_{+}^{N}\to\real_{+}$ for every $l \in \{1,\ldots,L\}$.
This convexity implies Assumption \ref{asmp.convex.rhoMl.exp} by \cite[Proposition 6]{renatoconvex},
but the converse does not hold in general (see Sect.~\ref{sect.wireless.simu} for examples). Thus, convergence of Algorithm \ref{alg.rate.Gmaxlin}
to the global optimal value of Problem \eqref{eq.sumrate.power} is guaranteed in substantially more general scenarios than  Algorithm \ref{alg.sinr.Gmaxlin}.
\end{remark}

\begin{remark}[Computational cost of Algorithm \ref{alg.rate.Gmaxlin}]
At each iteration $k \in \mathbb{N}$, the computational cost of Algorithm \ref{alg.rate.Gmaxlin} is dominated by the evaluation of $\rho(\mathrm{diag}(\mathfrak{E}(\signal{r}_k))\signal{M}_l)$
for each $l = 1,\ldots,L$.
Since $\mathrm{diag}(\mathfrak{E}(\signal{r}_k))\signal{M}_l$
is guaranteed to be a positive matrix by
Theorem \ref{theo.converge.rate}(ii)
and the setting of Example \ref{ex.maxlinearG},
we just need to compute the spectral radius in the sense of standard linear algebra,
i.e., the largest absolute eigenvalue of the positive matrix
$\mathrm{diag}(\mathfrak{E}(\signal{r}_k))\signal{M}_l$ for each $l = 1,\ldots,L$.
Hence, the computational cost of Algorithm \ref{alg.rate.Gmaxlin} is considered fairly low
because we can exploit the rich literature on linear algebra (e.g., \cite[Ch.~7]{golub})
for efficient computation of largest absolute eigenvalues of positive matrices.
\end{remark}

\begin{remark}[Applicability of Algorithm \ref{alg.rate.Gmaxlin} to general scenarios in Example \ref{ex.maxlinearG}]
Although Algorithm \ref{alg.rate.Gmaxlin} is derived under Assumption \ref{asmp.convex.rhoMl.exp},
even without this assumption,
the steps of Algorithm \ref{alg.rate.Gmaxlin} are computable
if $\signal{M}_l \in \mathbb{R}_{++}^{N \times N}$ for each $l \in \{1,\ldots,L\}$.
At the first iteration $k = 1$, since $\signal{r}_1 \in \real_{++}^{N}$ by assumption,
there exist positive right and left eigenvectors of $\mathrm{diag}(\mathfrak{E}(\signal{r}_1))\signal{M}_l \in \mathbb{R}_{++}^{N \times N}$ corresponding to the spectral radius $\rho(\mathrm{diag}(\mathfrak{E}(\signal{r}_1))\signal{M}_l) > 0$ (see Footnote \ref{footnote.lreigenvec}). Hence, Algorithm \ref{alg.rate.Gmaxlin} can compute $\hat{\signal{r}}_{1}$ and then $\tilde{\signal{r}}_1$ and $\signal{r}_2$.
Similarly to the proof of Theorem \ref{theo.converge.rate}(ii), we can ensure
$\signal{r}_2 \in \real_{++}^{N}$, and hence $\hat{\signal{r}}_{2}$ is computable.
Repeating this process, we deduce that the whole sequence $(\bm{r}_k)_{k \in \mathbb{N}}$ can be generated by Algorithm \ref{alg.rate.Gmaxlin} even if Assumption \ref{asmp.convex.rhoMl.exp} does not hold. This fact is important in practice because it shows that there is no need to verify Assumption \ref{asmp.convex.rhoMl.exp} to apply Algorithm \ref{alg.rate.Gmaxlin} to, e.g., resource allocation in cell-less networks (see also Sect.~\ref{sect.wireless} below).
\end{remark}

\section{Applications in cell-less networks}
\label{sect.wireless}
\subsection{System model}
\label{sect.wireless.system}
To illustrate the preceding results in a concrete application, we consider the uplink of a cell-less network model similar to \cite{miretti2022closed,renatoconvex}. Specifically, $N\in\Natural$ users represented by the set $\mathcal{N}=\{1,\ldots,N\}$ transmit data to $K_1\in\Natural$ access points, each equipped with $K_2\in\Natural$ antennas. For each user $n\in\mathcal{N}$, the aggregated channel to all access points is modeled by a random vector $\signal{h}_n$ with samples taking values in $\mathbb{C}^{K_1K_2}$. (For brevity, the underlying probability space is omitted, and all subsequent operations on random variables, such as expectations, are assumed to be well defined.) Likewise, the joint beamforming vector applied by all access point to the signal of user $n\in\mathcal{N}$ is modeled as a $\mathbb{C}^{K_1K_2}$-valued random vector denoted by $\signal{v}_n$. We assume that the beamformer $\signal{v}_n$ is not a function of the transmit power vector $\signal{p}= [p_1,\ldots,p_N]^{\mathsf{T}} \in \real_+^N$, where $p_n$ denotes the transmit power of user $n\in\mathcal{N}$. A classical beamforming scheme satisfying this assumption, which is also used in our numerical simulations, is conjugate beamforming. In this scheme, the beamformers $(\signal{v}_n)_{n\in\mathcal{N}}$ are chosen as the corresponding channel estimates $(\signal{h}_n)_{n\in\mathcal{N}}$, with entries set to zero at the coordinates of access points that do not participate in detecting the symbols of user $n$. In this setting, given a power vector $\signal{p}\in\real_{++}^N$, the achievable rate $r_n$ of user $n\in\mathcal{N}$ based on the UatF bound is given by $r_n:=\log(1+s_n)$ [nats per symbol], where $s_n$ is the effective SINR \cite{miretti2022closed}:
	\begin{align}
		\label{eq.sinrcl}
		s_n:=\dfrac{p_n |E[\signal{h}_n^\mathsf{H}\signal{v}_n]|^2}{p_n V(\signal{h}^\mathsf{H}_n\signal{v}_n)+\sum \limits_{l \in\mathcal{N}\backslash\{n\}}p_l E[|\signal{h}_l^\mathsf{H}\signal{v}_n|^2]+E[\|\signal{v}_n\|_2^2]},
	\end{align}
	and  $E(\cdot)$ and $V(\cdot)$ denote, respectively, the expectation and the variance of a random variable.
With the above definitions, weighted sum-rate maximization based on the UatF bound can be expressed as in Problem \eqref{eq.sumrate.power} by setting
\begin{align}
\label{eq.UatFint}
f_n\colon \real_+^N\to\real_{++}\colon\bm{p}\mapsto\bm{m}_n^{\mathsf{T}}\bm{p} + u_n
\end{align}
for every $n \in \mathcal{N}$, where $\bm{m}_n := |E[\signal{h}_n^\mathsf{H}\signal{v}_n]|^{-2}\bm{c}_n \in \real_{++}^{N}$;
$\bm{c}_n\in \real_{++}^{N}$ is the vector with $l$th entry given by $V(\signal{h}^\mathsf{H}_n\signal{v}_n)$ if $l=n$, or $E[|\signal{h}_l^\mathsf{H}\signal{v}_n|^2]$ otherwise; and $u_n := |E[\signal{h}_n^\mathsf{H}\signal{v}_n]|^{-2}E[\|\signal{v}_n\|_2^2] > 0$. In particular, for conjugate beamforming, the vectors
$\bm{m}_n ~ (n \in \mathcal{N})$ and $\signal{u}$ are typically positive, which is the assumption we adopt here.

If we impose a per-user power constraint $p_\text{max}>0$, the set of feasible power is given by $\mathcal{P}=\{\signal{p}\in\real_{+}^N \mid \|\signal{p}\|\le 1 \}$, where $\|\cdot\|$ is the scaled $\ell_\infty$-norm defined by
\begin{multline*}
(\forall\signal{p}\in\real^N)\quad \|\signal{p}\| := (1/p_\text{max})~\|\signal{p}\|_\infty=\max_{l\in\{1,\ldots,N\}}\signal{a}_l^{\mathsf{T}}|\signal{p}|,
\end{multline*}
and $\signal{a}_l\in\real_+^N$ is the vector of zeros, expect for its $l$th entry, which is set to $1/p_\text{max}$. In this setting, as shown in \cite{renatoconvex} and reviewed in Sect.~\ref{sect.intro.reformulation}, Problem~\refeq{eq.sumrate.power} can be written using achievable rates as the optimization variables as Problem~\refeq{eq.sumrate.rate} by using the following mapping $G$ in the definition of the achievable rate region $\mathcal{R}$ in \refeq{eq.def.regionrate}:
\begin{align*}
G:\real_+^N\to\real_{+}^N:\signal{p}\mapsto \signal{Mp}+\signal{u}\|\signal{p}\|,
\end{align*}
where $\signal{M} := [\signal{m}_1,\ldots,\signal{m}_N]^\mathsf{T}$ and
$\signal{u}:=[u_1,\ldots,u_N]^\mathsf{T}$. The matrix $\signal{M}$ and the vector $\signal{u}$ are typically positive for conjugate beamforming, implying that the setting of Example~\ref{ex.maxlinearG} is verified, so $\varrho_G$ in Problem~\refeq{eq.sumrate.rate} can be expressed as in \refeq{eq.rho.maxlin} with $L=N$. As a result, Algorithm~\ref{alg.rate.Gmaxlin} provably converges to the global optimal value of Problem~\refeq{eq.sumrate.rate} whenever Assumption \ref{asmp.convex.rhoMl.exp} holds (see Remark \ref{remark.optimality.rate}). We recall that a sufficient (though not necessary) condition for Assumption \ref{asmp.convex.rhoMl.exp} is that the matrices $(\signal{M}_l)_{l\in\{1,\ldots,N\}}$ introduced in Example~\ref{ex.maxlinearG} are inverse Z-matrices.

\subsection{Simulations}
\label{sect.wireless.simu}

For the numerical simulations, we consider the same settings as in \cite[Sect.~V]{renatoconvex},\cite{miretti2022closed}. Briefly, we consider the uplink of a small cell-less network with four access points, each equipped with two antennas, placed uniformly at random in a $100\mathrm{m} \times 100\mathrm{m}$ square area. There are three single-antenna users also dropped uniformly at random in the same area, and each user is associated with the two access points that provide the strongest channels. All other simulations parameters (transmit frequency, system bandwidth, channel models, etc.) are the same as described in \cite[Sect.~III-D]{miretti2022closed}, so we omit the details for brevity.

In the simulations, $f_n~(n \in \mathcal{N})$ in Problem \eqref{eq.sumrate.power} is defined according to \eqref{eq.UatFint} using the quantities in the UatF bound shown in \eqref{eq.sinrcl}, where the expectations are approximated via empirical averages with 100 samples of the channels. 
For simplicity, we use the uniform weight $(\forall n \in \mathcal{N})~w_n = 1$ in Problems \eqref{eq.sumrate.power}, \eqref{eq.sumrate.rate}, and  \eqref{eq.sumrate.sinr}.

We first investigate the validity of Assumption \ref{asmp.convex.rhoMl.exp} in $1000$ simulations of user drops.
The matrices $(\signal{M}_l)_{l\in\{1,\ldots,N\}}$ have been found to be inverse Z-matrices in $19.3$\% of simulations. Since this condition is just sufficient for Assumption \ref{asmp.convex.rhoMl.exp}, we also check the necessary condition for convexity of $\varrho_{\bm{M}_l}\circ \mathfrak{E}\colon\real_{+}^{N}\to\real_{+}~(l =1,\ldots,N)$ on discrete points:
\begin{multline*}
(\varrho_{\bm{M}_l} \circ \mathfrak{E}) (\alpha \bar{\bm{r}}_1 + (1-\alpha)\bar{\bm{r}}_2)\\
\leq \alpha (\varrho_{\bm{M}_l} \circ \mathfrak{E})(\bar{\bm{r}}_1) +  (1-\alpha)(\varrho_{\bm{M}_l} \circ \mathfrak{E})(\bar{\bm{r}}_2),
\end{multline*}
where $10000$ pairs of $\bar{\bm{r}}_1$ and $\bar{\bm{r}}_2$ are uniformly drawn from $[0, 5]$, and $\alpha$ is selected from $99$ uniformly spaced points between $0.01$ and $0.99$. This necessary condition has been satisfied in $99.7$\% of simulations, suggesting that Assumption \ref{asmp.convex.rhoMl.exp} may hold true in most simulations. Similarly, we check the necessary condition for convexity of $\varrho_{\bm{M}_l}\colon\real_{+}^{N}\to\real_{+}~(l =1,\ldots,N)$
-- assumed for Algorithm \ref{alg.sinr.Gmaxlin} developed in the Supplemental Material for Problem \eqref{eq.sumrate.sinr} -- on discrete points:
$$
\varrho_{\bm{M}_l} (\alpha \bar{\bm{s}}_1 + (1-\alpha)\bar{\bm{s}}_2)
\leq \alpha \varrho_{\bm{M}_l}(\bar{\bm{s}}_1) +  (1-\alpha)\varrho_{\bm{M}_l}(\bar{\bm{s}}_2) + \epsilon,
$$
where $\epsilon = 10^{-13}$ is introduced to avoid the issue of numerical error. This condition has been satisfied in $77.7$\% of simulations.
Hence, Assumption \ref{asmp.convex.rhoMl.exp} is expected to hold in substantially more general scenarios than the convexity condition of $\varrho_{\bm{M}_l}~(l =1,\ldots,N)$.
Note that we have the mathematical fact that convexity of $\varrho_{\bm{M}_l}$
implies convexity of $\varrho_{\bm{M}_l}\circ\mathfrak{E}$ \cite[Proposition 6]{renatoconvex}.

Next, we examine the convergence of Algorithm \ref{alg.rate.Gmaxlin} for Problem \eqref{eq.sumrate.rate} and Algorithm \ref{alg.sinr.Gmaxlin} for Problem \eqref{eq.sumrate.sinr}. As a benchmark, we also consider the WMMSE method from \cite{shi2011iteratively}, applied to UatF-based instances of Problem~\eqref{eq.sumrate.power}. The WMMSE method is a well-known heuristic widely used in the cell-less literature \cite{demir2021,miretti2025two}. It is a block coordinate descent algorithm applied to a weighted MMSE reformulation of the sum-rate optimization problem. Since this reformulation is typically nonconvex in general, the behavior of the WMMSE algorithm is strongly initialization-dependent: different starting points may lead to different suboptimal points, even in cases where Problems \eqref{eq.sumrate.rate} and \eqref{eq.sumrate.sinr} are convex. For example, following the recommendation in \cite[Alg.~7.2]{demir2021} to initialize within the feasible set $\mathcal{P}$, we verify that the WMMSE algorithm tends to converge to different (typically suboptimal) points when started from (a) $\bm{p}_1 = p_\text{max}[1,1,1]^{\mathsf{T}} \in \mathcal{P}$ and (b) $\bm{p}_1 = p_\text{max}[1,0,1]^{\mathsf{T}}\in \mathcal{P}$.\footnote{%
The authors gratefully acknowledge Isabel von Stebut for the suggestion on initialization of the WMMSE method.}

Fig.~\ref{fig.simu.invZ} shows the results of a user placement scenario where $(\signal{M}_l)_{l\in\{1,\ldots,N\}}$ are inverse Z-matrices.
In this scenario, convergence of Algorithm \ref{alg.rate.Gmaxlin} to the global optimum of Problem \eqref{eq.sumrate.rate} is ensured by Theorem \ref{theo.converge.rate} if the initial point $\bm{r}_1$ is positive and the positive step-size $(\mu_k)_{k\in\mathbb{N}}$ satisfies $\lim_{k\to\infty}\mu_k = 0$ and $\sum_{k\in\mathbb{N}}\mu_k = \infty$.
Likewise, Problem \eqref{eq.sumrate.sinr} is also guaranteed to be convex in this setting, and convergence of Algorithm \ref{alg.sinr.Gmaxlin} to the global optimum of Problem \eqref{eq.sumrate.sinr} is also guaranteed by Theorem \ref{theo.converge.sinr} in the Supplemental Material if the initial point $\bm{s}_1$ is positive and the positive step-size $(\mu_k)_{k\in\mathbb{N}} $ satisfies the above condition.
In particular, we use $\bm{r}_1 = [0.5, 0.5, 0.5]^{\mathsf{T}}$ and $(\forall k \in \mathbb{N})~\mu_k = 0.4k^{-0.999}$ for Algorithm \ref{alg.rate.Gmaxlin}, and $\bm{s}_1 = [0.5, 0.5, 0.5]^{\mathsf{T}}$
and $(\forall k \in \mathbb{N})~\mu_k = 1.6k^{-0.999}$ for Algorithm \ref{alg.sinr.Gmaxlin}.

\begin{figure}[t]
\centering
\subfloat[]{\includegraphics[width=0.5\columnwidth]{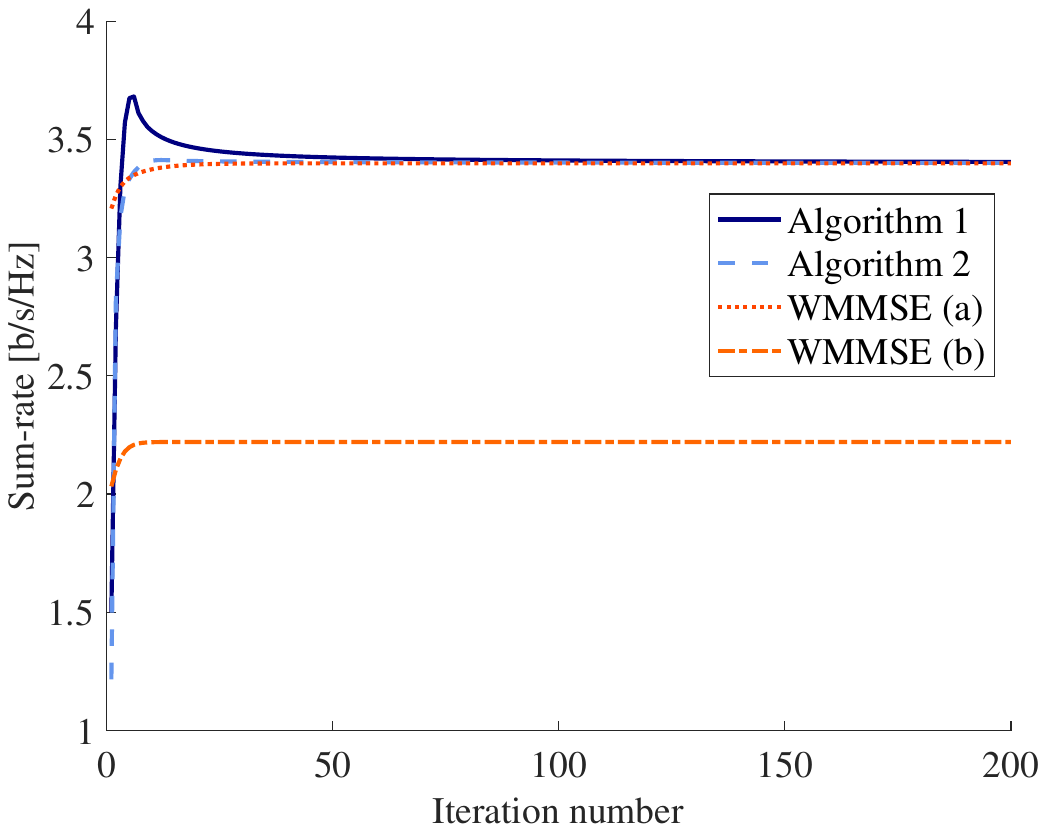}}
\hfill
\centering
\subfloat[]{\includegraphics[width=0.5\columnwidth]{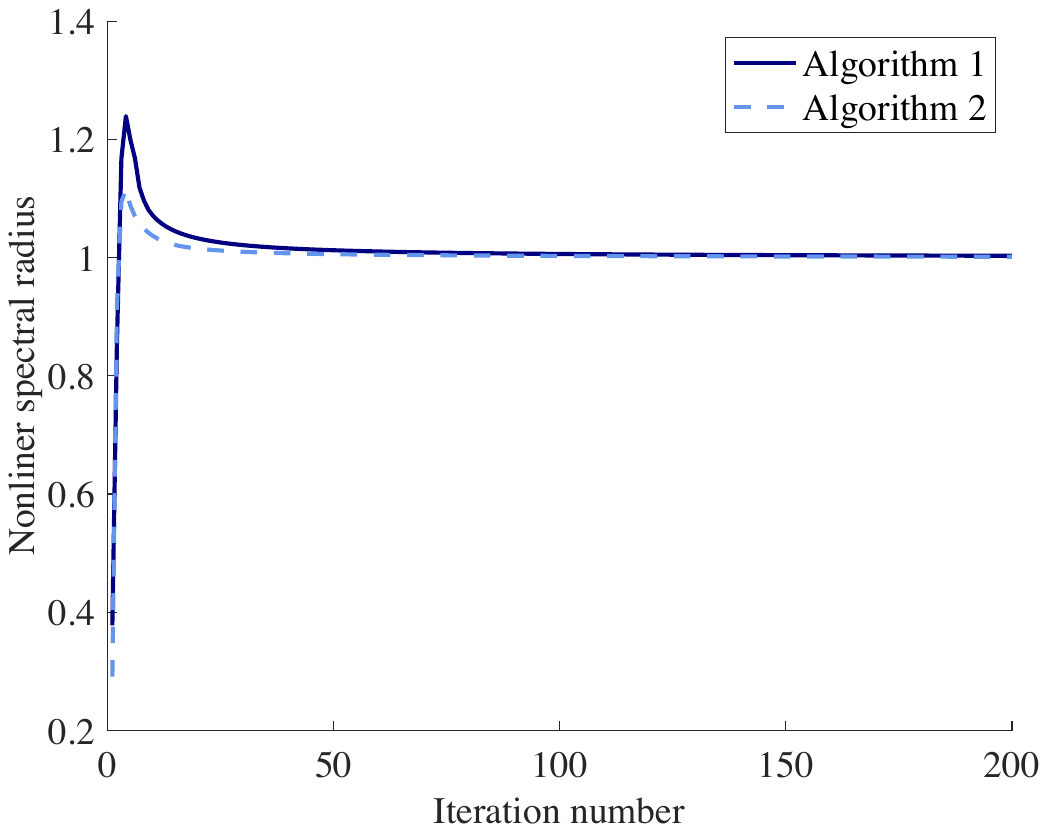}}
\caption{Behavior of algorithms for a simulation where $\bm{M}_l$ is an inverse Z-matrix for every $l \in \{1,\ldots,L\}$.}
\label{fig.simu.invZ}
\end{figure}

Fig.~\ref{fig.simu.invZ}(a) depicts the sum-rate at iteration $k \in \{1,\ldots,200\}$: 
$\bm{w}^\mathsf{T}\bm{r}_k$ with $\bm{r}_k$ generated by Algorithm \ref{alg.rate.Gmaxlin},
$\sum_{n \in \mathcal{N}}w_n\log(1 + s_{k,n})$ with $[s_{k,1},\ldots,s_{k,N}]^\mathsf{T} = \signal{s}_k$ generated by Algorithm \ref{alg.sinr.Gmaxlin},
and $\sum_{n \in \mathcal{N}}w_n\log(1 + p_{k,n}/f_{n}(\signal{p}_k))$ with $ = [p_{k,1},\ldots,p_{k,N}]^\mathsf{T} = \signal{p}_k$ generated by the WMMSE method using the initial points (a) and (b), respectively.
Fig.~\ref{fig.simu.invZ}(b) depicts the value of the nonlinear spectral radius function at iteration $k \in \{1,\ldots,200\}$: $(\spradG \circ \mathfrak{E}) (\bm{r}_k)$ for Algorithm \ref{alg.rate.Gmaxlin}
and $\spradG(\bm{s}_k)$ for Algorithm \ref{alg.sinr.Gmaxlin}.
Our theoretical results are validated in Fig.~\ref{fig.simu.invZ}(a) and (b) because Algorithms \ref{alg.rate.Gmaxlin} and \ref{alg.sinr.Gmaxlin} both converge to the same sum-rate value, and they eventually satisfy the constraints in Problems \eqref{eq.sumrate.rate} and \eqref{eq.sumrate.sinr}.
Note that the nonnegative constraints in Problems \eqref{eq.sumrate.rate} and \eqref{eq.sumrate.sinr} are satisfied at every iteration $k \in \mathbb{N}$ (see Theorems \ref{theo.converge.rate}(ii) and \ref{theo.converge.sinr}(ii)).

In Fig.~\ref{fig.simu.invZ}, Algorithms~\ref{alg.rate.Gmaxlin} and \ref{alg.sinr.Gmaxlin} exhibit convergence speed comparable to the fast WMMSE heuristic. Using the solutions produced by Algorithms~\ref{alg.rate.Gmaxlin} and \ref{alg.sinr.Gmaxlin} as globally optimal references, we observe that the WMMSE algorithm initialized at point (a) also appears to converge to the global optimal value in this experiment. However, unlike the proposed methods, it seems difficult -- beyond the specific setting considered here -- to characterize which starting points for the WMMSE algorithm lead to the global optimal value.

In Fig.~\ref{fig.simu.notinvZ}, we use the same settings used to produce \cite[Fig.~3]{renatoconvex}. It corresponds to a user placement scenario where $\signal{M}_{l}$ is not an inverse Z-matrix for every $l \in \{1,\ldots,N\}$. While the condition on $\signal{M}_{l}$ to be an inverse Z-matrix is just sufficient for convexity of $\varrho_{\bm{M}_l}$, we have numerically confirmed that $\varrho_{\bm{M}_l}$ is not convex for every $l \in \{1,\ldots,N\}$. On the other hand, numerical evaluations of $\varrho_{\bm{M}_l} \circ \mathfrak{E}~(l =1,\ldots,N)$ suggest that
Assumption \ref{asmp.convex.rhoMl.exp} may indeed hold. Hence, we conjecture that the convergence guarantee in Theorem~\ref{theo.converge.rate} also applies in this scenario. The initializations and step-sizes for Algorithms~\ref{alg.rate.Gmaxlin} and \ref{alg.sinr.Gmaxlin} are chosen exactly as in the experiment of Fig.~\ref{fig.simu.invZ}. While the guarantee in Theorem~\ref{theo.converge.sinr} for Algorithm~\ref{alg.sinr.Gmaxlin} no longer applies here since $\varrho_{\bm{M}_l}$ is not convex, both Algorithms~\ref{alg.rate.Gmaxlin} and \ref{alg.sinr.Gmaxlin} empirically converge to the same sum-rate value in this setting. Nevertheless, we recommend using Algorithm~\ref{alg.rate.Gmaxlin} in general, because its convergence to the global optimum of Problem~\eqref{eq.sumrate.rate} is guaranteed under substantially more general conditions than those available for Algorithm~\ref{alg.sinr.Gmaxlin}.

\begin{figure}[t]
\centering
\subfloat[]{\includegraphics[width=0.5\columnwidth]{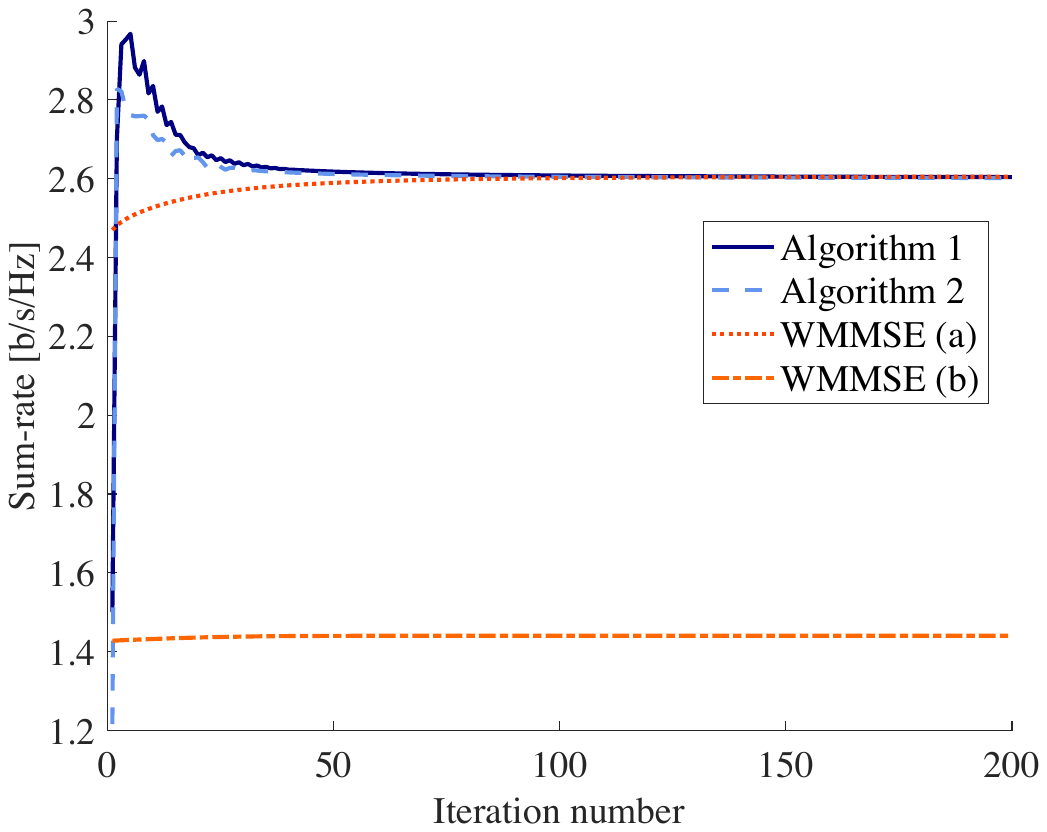}}
\hfill
\centering
\subfloat[]{\includegraphics[width=0.5\columnwidth]{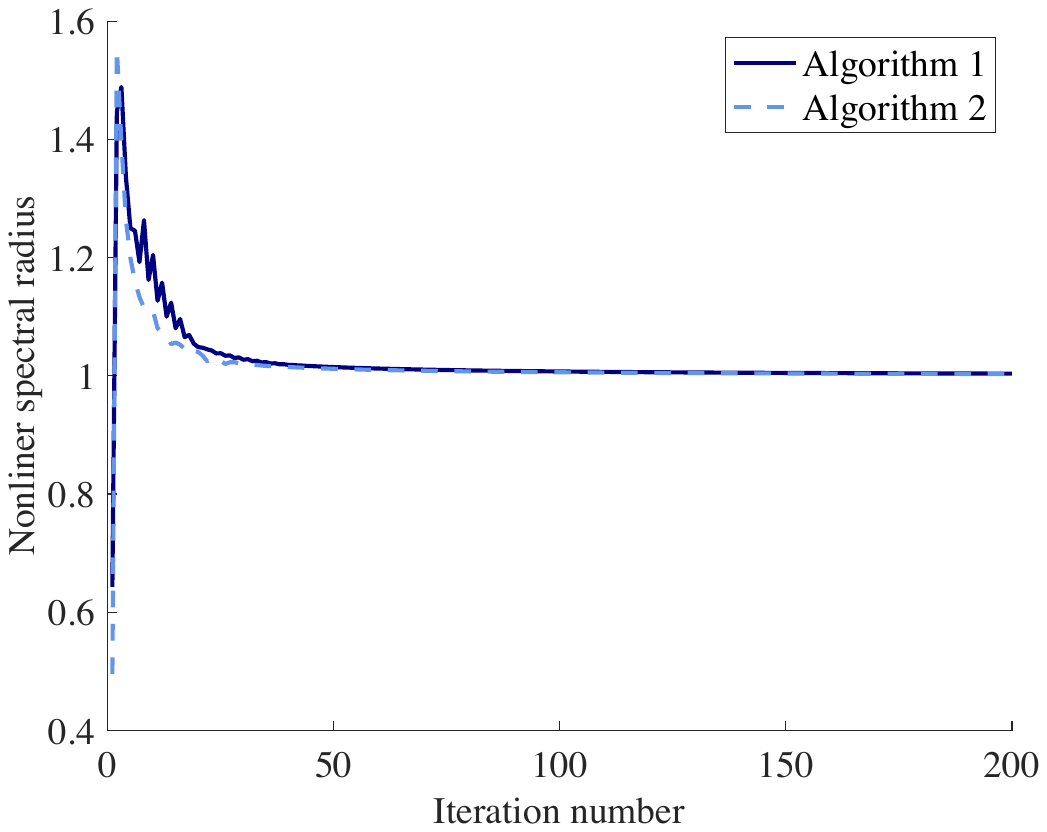}}
\caption{Behavior of algorithms for a simulation where $\bm{M}_l$ is not an inverse Z-matrix for some $l \in \{1,\ldots,L\}$.}
\label{fig.simu.notinvZ}
\end{figure}

\section{Summary}
We have addressed the (weighted) sum-rate maximization problem over the achievable rate region expressed as the nonlinear spectral radius constraint set. This set tends to be convex in UatF-based resource allocation in cell-less networks. Nevertheless, even under convexity, sum-rate maximization is challenging because the projections onto the nonlinear spectral radius constraint sets are difficult to compute. We have resolved this difficulty by exploiting the subgradient projections onto the level sets of the carefully reformulated spectral radius functions. In particular, we have demonstrated that the proposed algorithm provably converges to the global optimum of the sum-rate maximization problem under the typical conditions in cell-less networks.

\appendices
\section{Computing a solution to Problem \eqref{eq.sumrate.power} from a solution to Problem \eqref{eq.sumrate.rate}}
\label{appendix.conversion}
Once a solution to Problem \eqref{eq.sumrate.rate} is obtained,
we can compute a solution to the original sum-rate maximization problem \eqref{eq.sumrate.power} as follows \cite[Sect.~IV.E]{renatoconvex}:

\begin{fact}
\label{fact.conversion}
Suppose that $F\colon \real_+^N\to\real_{++}^N\colon \signal{x}\mapsto (f_n(\signal{x}))_{n \in \mathcal{N}}$ with $f_n~(n\in\mathcal{N})$ in Problem \eqref{eq.sumrate.power} is a SSOC mapping satisfying Assumption \ref{asmp.compatible} with the order-preserving norm $\|\cdot\|$ in \eqref{eq.C}.
Let $F_{\|\cdot\|}$ be the PHOC mapping constructed as shown in Fact \ref{fact.compatible}
from $F$ and $\|\cdot\|$,
$\bm{r}^{\star} \in \mathbb{R}_{+}^{N}$ be a solution to Problem \eqref{eq.sumrate.rate}
using $G=F_{\|\cdot\|}$,
$\mathcal{I} := \{n \in \mathcal{N} \mid r_n^\star > 0\}$,
$\signal{D}^\star := \mathrm{diag}((e^{r_n^\star}-1)_{n \in \mathcal{I}}) \in \mathbb{R}_{++}^{|\mathcal{I}| \times |\mathcal{I}|}$, $F_{\mathcal{I}}\colon\mathbb{R}_{+}^{|\mathcal{I}|}\to\mathbb{R}_{++}^{|\mathcal{I}|}$
be the mapping obtained by extracting the coordinates of $F$ in $\mathcal{I}$,\footnote{For example, if $N = 3$ and $\mathcal{I} = \{1, 3\}$, then $F_{\mathcal{I}}\colon\mathbb{R}_{+}^2\to\mathbb{R}_{++}^2$ is given by $(\forall \bm{q} \in \mathbb{R}_{+}^2)~F_{\mathcal{I}}(\bm{q}) := (f_n(q_1, 0, q_2))_{n \in \mathcal{I}}$.} and
$T_{F, \mathcal{I}, \signal{D}^\star}\colon\mathbb{R}_{+}^{|\mathcal{I}|}\to\mathbb{R}_{++}^{|\mathcal{I}|}\colon \bm{q}\mapsto\signal{D}^\star (F_{\mathcal{I}} (\bm{q}))$. Then $\bm{p}^{\star} \in \mathbb{R}_{+}^{N}$ given by
\begin{align*}
&(p_n^{\star})_{n \in \mathcal{I}} \in \mathrm{Fix}(T_{F, \mathcal{I}, \signal{D}^\star}) =
\{\bm{q} \in \mathbb{R}_{++}^{|\mathcal{I}|} \mid  \signal{D}^\star (F_{\mathcal{I}} (\bm{q})) = \bm{q} \},\\
&p_n^{\star} = 0 \quad (\forall n \in \mathcal{N} \setminus \mathcal{I})
\end{align*}
is a solution to Problem \eqref{eq.sumrate.power}.
\end{fact}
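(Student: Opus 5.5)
The plan is to check directly that $\bm{p}^\star$ is feasible for Problem \eqref{eq.sumrate.power} and that its SINRs equal $\mathfrak{E}(\bm{r}^\star)$. The optimal values of \eqref{eq.sumrate.power} and \eqref{eq.sumrate.rate} coincide under Assumption \ref{asmp.compatible} (see the remark after Fact \ref{fact.compatible}). Hence $\bm{p}^\star$ then attains the value $-\bm{w}^{\mathsf{T}}\bm{r}^\star$, which is optimal.

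\textbf{Step 1 (existence and positivity of the fixed point).} The mapping $F_{\mathcal{I}}$ is SSOC, since restricting coordinates preserves strict subhomogeneity, order-preservation and continuity. Because $\signal{D}^\star$ is a positive diagonal matrix, $T_{F,\mathcal{I},\signal{D}^\star}$ is also a SSOC mapping with values in $\real_{++}^{|\mathcal{I}|}$. By the standard theory of standard interference functions \cite{yates95,lem13}, such a mapping has a (unique) positive fixed point if and only if its asymptotic PHOC mapping has spectral radius $<1$. The boundary case $\varrho=1$ is exactly what we must handle, so here I follow \cite{renatoconvex}. There the existence of a fixed point satisfying $\|\bm{q}\|\le 1$ is tied to the condition $\spradG(\mathfrak{E}(\bm{r}^\star))\le 1$ with $G=F_{\|\cdot\|}$. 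The key identity is that $\bm{q}$ solves $\bm{q}=\signal{D}^\star F_{\mathcal{I}}(\bm{q})$ with $\|\bm{q}\|=1$ if and only if $\bm{q}$ is an eigenvector of $\signal{D}^\star (F_{\mathcal{I}})_{\|\cdot\|}$ with eigenvalue $1$. This follows from Assumption \ref{asmp.compatible}: substituting $u_{n,y}=u_{n,y}\|\bm{q}\|$ turns $f_n$ into $(f_n)_{\|\cdot\|}$.

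\textbf{Step 2 (feasibility via optimality of $\bm{r}^\star$).} At an optimum with $\bm{w}>\bm{0}$, the constraint must be active: $\spradG(\mathfrak{E}(\bm{r}^\star))=1$. Otherwise Fact \ref{fact.qnorm}(i),(iv) would allow scaling $\bm{r}^\star$ up while staying feasible. Restricting to the coordinates in $\mathcal{I}$ leaves the spectral radius unchanged, because the zero rows of $\mathrm{diag}(\mathfrak{E}(\bm{r}^\star))$ decouple the coordinates outside $\mathcal{I}$. The Perron eigenvector from Definition \ref{def.nl_radius} is attained. By strict subhomogeneity it has to be positive on $\mathcal{I}$, and after normalization $\|\bm{q}\|=1$ it gives a fixed point of $T_{F,\mathcal{I},\signal{D}^\star}$. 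Uniqueness of the positive fixed point of a SSOC map then identifies $(p^\star_n)_{n\in\mathcal{I}}$ with this $\bm{q}$, so $\|\bm{p}^\star\|=1$ and $\bm{p}^\star\in\mathcal{P}$.

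\textbf{Step 3 (rates).} For $n\in\mathcal{I}$, the fixed-point equation gives $p^\star_n/f_n(\bm{p}^\star)=e^{r^\star_n}-1$, so user $n$ has rate $r^\star_n$. For $n\notin\mathcal{I}$, $p^\star_n=0$ and the rate is $0=r^\star_n$. The objective therefore equals $-\bm{w}^{\mathsf{T}}\bm{r}^\star$.

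\textbf{Main obstacle.} The hard part is Step 2: showing that the eigenvector is strictly positive on $\mathcal{I}$ and that the eigen/fixed-point correspondence is exact for the restricted mapping. I would handle it by rewriting the eigenvector equation coordinatewise, as $\lambda q_n = (e^{r^\star_n}-1)(f_n)_{\|\cdot\|}(\bm{q})$ with each $(f_n)_{\|\cdot\|}(\bm{q})>0$ for $\bm{q}\neq\bm{0}$, using $u_{n,y}$ bounded away from zero. Positivity then follows immediately.
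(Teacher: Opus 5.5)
Your proof is correct, and it follows a different route from the paper. The paper does not prove this fact itself. It cites \cite{renatoconvex} and justifies existence and uniqueness of the fixed point by a general characterization from that reference: for any $\signal{s}$ with $\spradG(\signal{s})\le 1$, the SSOC mapping $\mathrm{diag}(\signal{s})F$ (here restricted to $\mathcal{I}$) has a unique fixed point of norm at most one. That result gives a way back from every point of $\mathcal{R}$, not only optimal ones, to a power vector in $\mathcal{P}$. Its proof in the case $\spradG(\signal{s})<1$ needs the conditional-eigenvalue machinery.

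You instead use optimality and $\signal{w}\in\real_{++}^N$ to force $(\spradG\circ\mathfrak{E})(\signal{r}^\star)=1$. The fixed point is then simply the Perron eigenvector of $\signal{D}^\star(F_{\mathcal{I}})_{\|\cdot\|}$ normalized to unit norm, since $F_{\|\cdot\|}$ and $F$ agree on $\{\signal{x}\mid\|\signal{x}\|=1\}$. Uniqueness of fixed points of SSOC maps finishes the argument. This is more elementary and self-contained, and it gives the extra information $\|\signal{p}^\star\|=1$, i.e., the full power budget is used. The price is that it only applies at boundary points of $\mathcal{R}$, which is all the statement requires.

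Three minor points to tidy up:
\begin{itemize}
\item Your scaling argument for activity of the constraint needs $\signal{r}^\star\neq\signal{0}$. Exclude $\signal{r}^\star=\signal{0}$ separately: $\epsilon\mathbf{1}\in\mathcal{R}$ for small $\epsilon>0$ by Fact \ref{fact.qnorm}(i), and it has a strictly better objective.
\item Positivity of the eigenvector on $\mathcal{I}$ comes from $(f_n)_{\|\cdot\|}(\signal{x})\ge(\inf_{y}u_{n,y})\|\signal{x}\|>0$ for $\signal{x}\neq\signal{0}$, as you say in your last paragraph. It does not come from strict subhomogeneity, as Step 2 states.
\item The remark in Step 1 about the ``boundary case'' conflates $\spradG$, which is built from $F_{\|\cdot\|}$, with the spectral radius of the asymptotic mapping of $T_{F,\mathcal{I},\signal{D}^\star}$. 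Since a fixed point exists, the latter is strictly below one here. Step 2 never uses that criterion, so the remark can simply be dropped.
\end{itemize}
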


Since $F$ is a SSOC mapping by assumption, so is the mapping $T_{F, \mathcal{I}, \signal{D}^\star}$.
As a result, since $(\spradG \circ \mathfrak{E})(\bm{r}^\star)  \leq 1$ holds
for the solution $\bm{r}^\star$ to Problem \eqref{eq.sumrate.rate} using $G=F_{\|\cdot\|}$, the fixed point of $T_{F, \mathcal{I}, \signal{D}^\star}$ uniquely exists \cite[Sect.~III.C]{renatoconvex}.
Moreover, the uniquely existing fixed point can be computed with the standard fixed point iterations \cite{yates95,nuzman07} or with acceleration methods \cite{cavalcante2016}. If $F$ is also a \emph{positive concave mapping}, convergence is guaranteed to be geometric in the standard Euclidean space (see \cite{piotrowski2022} for details).

\section{Supplementary definitions and facts}
\label{appendix.proof.subgradient}
For completeness, we clarify definitions of differentiability and collect known results that are mainly used in the proof of
Proposition \ref{prop.subgrad.rate.maxlinG}.

\begin{definition}
\label{def.differentiable}
Let $U$ be an open subset of $(\real^N, \langle \cdot, \cdot \rangle, \|\cdot\|_2)$. A mapping $T\colon U \to\real^M$ is
G\^{a}teaux differentiable
at $\signal{x} \in U$ if there exists a linear operator 
$\mathsf{D}T({\bm{x}})\colon \real^N \rightarrow \real^M$ such that
$$
(\forall \bm{d} \in \real^N) \quad \lim_{0 \neq \alpha \to 0} \frac{T(\bm{x} + \alpha\bm{d})-T(\bm{x})}{\alpha} = (\mathsf{D}T({\bm{x}}))(\bm{d}).
$$
A mapping $T\colon U \to\real^M$ is Fr\'{e}chet differentiable
at $\signal{x} \in U$ if there exists a linear operator $\mathsf{D}T({\bm{x}})\colon \real^N \rightarrow \real^M$ such that
$$
\quad \lim_{0 \neq \|\bm{d}\|_2 \to 0} 
\frac{\|T(\bm{x} + \bm{d})-T(\bm{x})- (\mathsf{D}T({\bm{x}}))(\bm{d})\|_2}{\|\bm{d}\|_2} = 0.
$$
If a function $f\colon U\to \real$ is G\^{a}teaux differentiable at $\signal{x} \in U$,
then there exists a unique vector $\nabla f(\bm{x}) \in \real^N$ such that
$(\forall \bm{d} \in \real^N)~(\mathsf{D}T({\bm{x}}))(\bm{d}) = \langle \nabla f(\bm{x}), \bm{d} \rangle$,
and $\nabla f(\bm{x})$ is called the G\^{a}teaux gradient of $f$ at $\bm{x}$.
If a function $f\colon U\to \real$ is Fr\'{e}chet differentiable at $\signal{x} \in U$,
the Fr\'{e}chet gradient of $f$ at $\bm{x} \in U$ is also defined similarly
and denoted by $\nabla f(\bm{x}) \in \real^N$.
Note that G\^{a}teaux and Fr\'{e}chet differentiability are the same notions for convex functions from $\real^N$
to $\real$ (see, e.g., \cite[Corollary~17.44]{baus17}).
\end{definition}

The following fact is a simplification of \cite[Theorem~18.5]{baus17} to convex functions from $\real^N$ to $\real$.

\begin{fact}
\label{fact.subdifmaxconv}
Let $h_l\colon \real^{N}\to\real$ be a convex function for every $l \in \{1,\ldots,L\}$,
and $h\colon \real^{N}\to\real\colon \signal{x}\mapsto \max_{l \in \{1,\ldots,L\}} h_l (\signal{x})$.
For every $\signal{x} \in \real^{N}$,
define
$\mathcal{I}(\signal{x}) := \argmax_{l \in \{1,\ldots,L\} }h_l (\signal{x})$.
Then we have
$(\forall \bm{x} \in \real^N)~\partial h (\signal{x}) = \mathrm{conv}(\cup_{l \in \mathcal{I}(\signal{x})}\partial h_l (\signal{x}))$,
where the convex hull of a set $S\subset\real^N$ is denoted by
$\mathrm{conv}(S)$.
\end{fact}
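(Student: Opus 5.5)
The plan is to prove the two inclusions separately. The inclusion $\supseteq$ follows directly from the subgradient inequality. The inclusion $\subseteq$ uses directional derivatives, the max formula, and a separation argument. Fix $\bm{x}\in\real^N$ and set $C:=\mathrm{conv}(\cup_{l\in\mathcal{I}(\bm{x})}\partial h_l(\bm{x}))$.

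\emph{Inclusion $\supseteq$.} Let $l\in\mathcal{I}(\bm{x})$ and $\bm{u}\in\partial h_l(\bm{x})$. Since $h_l(\bm{x})=h(\bm{x})$ for an active index, for every $\bm{y}$ we have
$$h(\bm{y})\ge h_l(\bm{y})\ge h_l(\bm{x})+\langle \bm{y}-\bm{x},\bm{u}\rangle = h(\bm{x})+\langle \bm{y}-\bm{x},\bm{u}\rangle .$$
Hence $\bm{u}\in\partial h(\bm{x})$. The set $\partial h(\bm{x})$ is an intersection of closed half-spaces, so it is convex. It therefore contains the convex hull $C$.

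\emph{Inclusion $\subseteq$.} I work with the one-sided directional derivatives $h'(\bm{x};\bm{d})$ and $h_l'(\bm{x};\bm{d})$, which exist for real-valued convex functions.

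First I show $h'(\bm{x};\bm{d})=\max_{l\in\mathcal{I}(\bm{x})}h_l'(\bm{x};\bm{d})$. Each $h_l$ is continuous by Fact \ref{fact.boundedSubgrad}. So for every inactive $l$, $h_l(\bm{x}+t\bm{d})<h(\bm{x})$ for all small $t>0$, and only active indices matter in the difference quotient $(h(\bm{x}+t\bm{d})-h(\bm{x}))/t$. That quotient is then the maximum over active $l$ of $(h_l(\bm{x}+t\bm{d})-h_l(\bm{x}))/t$. Since there are finitely many active indices, I can pass to the limit $t\downarrow 0$ inside the maximum.

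Next I invoke the max formula (e.g.\ \cite[Theorem~17.19]{baus17}): $h_l'(\bm{x};\bm{d})=\max_{\bm{u}\in\partial h_l(\bm{x})}\langle \bm{u},\bm{d}\rangle$. Combining the two identities gives $h'(\bm{x};\bm{d})=\sigma_C(\bm{d})$, the support function of $C$. Each $\partial h_l(\bm{x})$ is nonempty, convex and compact: boundedness comes from Fact \ref{fact.boundedSubgrad}, and closedness from the definition. The convex hull of a finite union of compact sets is compact, so $C$ is compact and convex.

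Now suppose $\bm{u}\in\partial h(\bm{x})\setminus C$. Strict separation of the point $\bm{u}$ from the closed convex set $C$ gives some $\bm{d}$ with $\langle\bm{u},\bm{d}\rangle>\sigma_C(\bm{d})=h'(\bm{x};\bm{d})$. On the other hand, the subgradient inequality with $\bm{y}=\bm{x}+t\bm{d}$, divided by $t>0$ and letting $t\downarrow0$, yields $\langle\bm{u},\bm{d}\rangle\le h'(\bm{x};\bm{d})$. This is a contradiction, so $\partial h(\bm{x})\subseteq C$.

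The main obstacle is the max formula linking directional derivatives to subdifferentials. I would take it from \cite{baus17} rather than reprove it. If a self-contained argument were needed, I would obtain it by a Hahn--Banach extension of the sublinear function $\bm{d}\mapsto h_l'(\bm{x};\bm{d})$, which is dominated by itself. The reduction of $h'$ to the active indices is routine once continuity is available.
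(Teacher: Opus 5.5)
Your proof is correct. It differs from the paper only in that the paper gives no argument at all: it specializes \cite[Theorem~18.5]{baus17}, which treats a finite maximum of proper convex functions on a Hilbert space at a point where every $h_l$ is continuous. For real-valued convex functions on $\real^N$ that continuity hypothesis holds everywhere by Fact~\ref{fact.boundedSubgrad}, and that is the entire ``simplification.''

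What you have written is in substance the standard argument for the cited theorem, carried out in $\real^N$. You reduce $h'(\bm{x};\cdot)$ to the active indices, identify it with the support function of $C$ via the max formula, and conclude by a support-function comparison phrased as strict separation. Finite dimensionality gives you compactness, hence closedness, of $C$ cheaply. Your route buys self-containedness, except for the max formula, which you still import from \cite{baus17}. The paper's route buys brevity, but leaves the reader to check the hypotheses of a more general result.

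One step needs tighter wording. Knowing $h_l(\bm{x}+t\bm{d})<h(\bm{x})$ for inactive $l$ does not by itself remove those indices, because the active values at $\bm{x}+t\bm{d}$ may also fall below $h(\bm{x})$. The clean statement is the following. For inactive $l$, the quotient $(h_l(\bm{x}+t\bm{d})-h(\bm{x}))/t$ tends to $-\infty$, since its numerator tends to $h_l(\bm{x})-h(\bm{x})<0$. For active $l$, the quotient tends to the finite value $h_l'(\bm{x};\bm{d})$. Hence for small $t>0$ the maximum in the difference quotient is attained on $\mathcal{I}(\bm{x})$.
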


While the next fact appears (without detailed derivation) in the proof of \cite[Theorem~4.3]{friedland81},
we include a proof in the Supplemental Material for completeness.
\begin{fact}
\label{fact.diff.rhodiagposM}
Let $\signal{M} \in \real_{++}^{N \times N}$.
Then $\varrho_{\signal{M}}\colon \real_{+}^N \to \real_{+}$
given in Definition \ref{def:spradG} is Fr\'{e}chet differentiable at every $\bm{x} \in \real_{++}^{N}$,
and its gradient is given by
\begin{align}
\label{eq.grad.rhodiagposM}
\nabla \varrho_{\signal{M}}(\signal{x}) = 
\frac{\mathrm{diag}(\signal{\eta})\signal{M}\signal{\xi}}{\bm{\eta}^{\mathsf{T}} \bm{\xi}},
\end{align}
where $\signal{\xi} \in \real_{++}^{N}$ and $\signal{\zeta} \in \real_{++}^{N}$
are positive right and left eigenvectors of
$\mathrm{diag}(\signal{x})\signal{M} \in \real_{++}^{N \times N}$
corresponding to the spectral radius $\rho(\mathrm{diag}(\signal{x})\signal{M}) > 0$.
\end{fact}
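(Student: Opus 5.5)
The statement to prove is Fact~\ref{fact.diff.rhodiagposM}: differentiability of $\varrho_{\signal{M}}$ at $\signal{x}\in\real_{++}^N$, together with the gradient formula \eqref{eq.grad.rhodiagposM}. (The eigenvector called $\signal{\zeta}$ in the statement is the left eigenvector $\signal{\eta}$.)

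The plan is to reduce everything to simple-eigenvalue perturbation theory for matrices. Fix $\signal{x}\in\real_{++}^N$ and set $A(\signal{y}) := \mathrm{diag}(\signal{y})\signal{M}$ for $\signal{y}$ in the open set $\real_{++}^N$. On this set, $A(\signal{y})$ is a positive matrix. By standard Perron--Frobenius theory (as in Footnote~\ref{footnote.lreigenvec}), $\varrho_{\signal{M}}(\signal{y})=\rho(A(\signal{y}))$ is an algebraically simple eigenvalue of $A(\signal{y})$. Its right and left eigenvectors $\signal{\xi},\signal{\eta}$ are positive, so $\signal{\eta}^{\mathsf{T}}\signal{\xi}>0$.

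The first step establishes smoothness. Consider $\Phi(\signal{y},\lambda) := \det(\lambda I - A(\signal{y}))$, which is a polynomial in $(\signal{y},\lambda)$. Simplicity of $\lambda_0 := \rho(A(\signal{x}))$ gives $\partial_\lambda\Phi(\signal{x},\lambda_0)\neq 0$. The implicit function theorem then yields a $C^\infty$ real function $\lambda(\signal{y})$ on a neighborhood $U\subset\real_{++}^N$ of $\signal{x}$ with $\lambda(\signal{x})=\lambda_0$ and $\Phi(\signal{y},\lambda(\signal{y}))=0$.

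The second step identifies $\lambda(\signal{y})$ with $\rho(A(\signal{y}))$ near $\signal{x}$, and I expect this to be the main obstacle. I would use continuity of the spectrum, i.e., continuity of the roots of the characteristic polynomial in its coefficients. For $\signal{y}$ close to $\signal{x}$, exactly one eigenvalue of $A(\signal{y})$ (with multiplicity) lies in a small disk around $\lambda_0$; by uniqueness in the implicit function theorem, this eigenvalue is $\lambda(\signal{y})$. All other eigenvalues stay within a disk of radius strictly less than $\lambda_0$, because for a positive matrix every other eigenvalue has modulus strictly less than $\lambda_0$. Meanwhile $\rho(A(\signal{y}))$ is continuous in $\signal{y}$ by Fact~\ref{fact.linearG} with Fact~\ref{fact.qnorm}(iv). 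Hence $\rho(A(\signal{y}))$ must be the eigenvalue near $\lambda_0$, so $\rho(A(\signal{y}))=\lambda(\signal{y})$ on $U$. A $C^1$ function is Fr\'echet differentiable, so differentiability follows.

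The third step computes the gradient. Choose a right eigenvector $\signal{\xi}(\signal{y})$ depending differentiably on $\signal{y}$, for example by normalizing one coordinate to $1$ and solving the resulting nonsingular linear system, again via the implicit function theorem or Cramer's rule. Differentiate $A(\signal{y})\signal{\xi}(\signal{y})=\lambda(\signal{y})\signal{\xi}(\signal{y})$ in direction $\signal{d}$ and left-multiply by $\signal{\eta}^{\mathsf{T}}$. The terms involving the derivative of $\signal{\xi}$ cancel because $\signal{\eta}^{\mathsf{T}}A(\signal{x})=\lambda_0\signal{\eta}^{\mathsf{T}}$. This leaves
\begin{align*}
\mathsf{D}\lambda(\signal{x})(\signal{d}) = \frac{\signal{\eta}^{\mathsf{T}}\mathrm{diag}(\signal{d})\signal{M}\signal{\xi}}{\signal{\eta}^{\mathsf{T}}\signal{\xi}} = \Bigl\langle \frac{\mathrm{diag}(\signal{\eta})\signal{M}\signal{\xi}}{\signal{\eta}^{\mathsf{T}}\signal{\xi}},\signal{d}\Bigr\rangle,
\end{align*}
where the last equality holds because $\signal{\eta}^{\mathsf{T}}\mathrm{diag}(\signal{d})\signal{v}=\sum_n \eta_n d_n v_n$. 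This gives \eqref{eq.grad.rhodiagposM}.

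Finally, the formula does not depend on the choice of eigenvectors, since it is invariant under positive rescaling of $\signal{\xi}$ and of $\signal{\eta}$.
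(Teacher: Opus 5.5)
Your proof is correct, but it takes a different route from the paper's.

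\textbf{The paper's route.} The paper uses neither the implicit function theorem nor continuity of polynomial roots. It adapts the proof of Horn--Johnson's Theorem~6.3.12 into a single quantitative estimate:
\begin{itemize}
\item It block-diagonalizes $A=\mathrm{diag}(x)M$ using $S=[\xi~S_1]$ with $S^{-1}=[\tilde\eta~Z_1]^{\mathsf{H}}$.
\item It brings the complementary block $A_1$ to upper-triangular form with off-diagonal row sums at most $\varepsilon$, and then rescales.
\item It applies Ger\v{s}gorin's theorem to $A+\mathrm{diag}(d)M$. The first disc is centered at $\lambda+d^{\mathsf{T}}\mathrm{diag}(\tilde\eta)M\xi$ with radius below $\varepsilon\|d\|_2$. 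It is disjoint from the other discs, which lie within $4\varepsilon$ of eigenvalues of $A_1$ and hence have modulus below $\lambda-2\varepsilon$.
\end{itemize}
This one estimate shows that the isolated eigenvalue is the spectral radius, and it gives $|\rho(A+\mathrm{diag}(d)M)-\lambda-\langle d,\mathrm{diag}(\tilde\eta)M\xi\rangle|<\varepsilon\|d\|_2$ for all small $d$. That is Fr\'echet differentiability together with the gradient, using only Ger\v{s}gorin's theorem and explicit similarity transforms.

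\textbf{What your route buys and costs.} Your route is modular: the implicit function theorem gives a smooth root branch, continuity of the spectrum identifies that branch with the Perron root, and the classical first-order computation gives the gradient. It yields more than the statement: $\varrho_{M}$ is $C^\infty$ on a neighborhood of $x$, not merely differentiable at $x$. The price is citing continuity of roots and needing a differentiable eigenvector branch.

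\textbf{Two imprecisions to tidy.}
\begin{itemize}
\item The identification in your second step does not follow from uniqueness in the implicit function theorem, since the real version only gives uniqueness among real roots in an interval. It follows because $\lambda(y)$ is a root that lies in the disk (by continuity), and the disk contains exactly one root.
\item The differentiable eigenvector needs a line of justification. For example, delete the first column of $A(y)-\lambda(y)I$, keep $N-1$ rows that are independent at $y=x$ (possible since $\xi_1\neq0$), and apply Cramer's rule. Alternatively, avoid eigenvector derivatives altogether: with any continuous branch $\xi(y)$, the identity $\eta^{\mathsf{T}}\bigl(A(y)-A(x)\bigr)\xi(y)=(\lambda(y)-\lambda_0)\,\eta^{\mathsf{T}}\xi(y)$ yields the gradient directly.
\end{itemize}
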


\bibliographystyle{IEEEtran}
\bibliography{IEEEabrv,references}

\clearpage

\section*{\bf Supplemental Material}

\section{Proof of Fact \ref{fact.diff.rhodiagposM}}
Our proof follows steps similar to the proof of \cite[Theorem 6.3.12]{horn12} that essentially shows
G\^{a}teaux differentiability of a simple eigenvalue as a function of a matrix.
We modify it to prove Fr\'{e}chet differentiability of 
$\varrho_{\signal{M}}\colon \real_{+}^N \to \real_{+}$ (see Definitions \ref{def.nl_radius} and \ref{def:spradG})
 at $\bm{x} \in \real_{++}^N$, provided that $\bm{M}$ is a positive matrix.

Fix $\bm{x} \in \real_{++}^{N}$ arbitrarily. By assumption, we have $\bm{A} := \mathrm{diag}(\signal{x})\signal{M} \in \real_{++}^{N \times N}$.
By standard Perron-Frobenius theory \cite[Theorem 8.2.8]{horn12},
$\lambda := \rho(\bm{A}) > 0$ is an algebraically simple eigenvalue of
$\signal{A}$, and there exist
positive right and left eigenvectors $\bm{\xi}$ and $\bm{\eta}$ corresponding to 
$\lambda = \rho(\bm{A})$. Hence, by \cite[Lemma 6.3.10]{horn12}, there exist $\bm{S}_1 \in \mathbb{C}^{N \times (N-1)}$ and $\bm{Z}_1 \in \mathbb{C}^{N \times (N-1)}$ such that $\bm{S} := [\bm{\xi}~\bm{S}_1] \in \mathbb{C}^{N \times N}$ is nonsingular, its inverse is given by
$\bm{S}^{-1} = [\tilde{\bm{\eta}}~\bm{Z}_1]^{\mathsf{H}}$ with $\tilde{\bm{\eta}} := \bm{\eta}/(\bm{\eta}^{\mathsf{T}}\bm{\xi})$, and
\begin{align*}
\bm{S}^{-1}\bm{A}\bm{S} =
\begin{bmatrix}
\lambda &\bm{0}^{\mathsf{T}}\\
\bm{0} &\bm{A}_1
\end{bmatrix}.
\end{align*}
Since $\lambda$ is an algebraically simple eigenvalue of $\signal{A}$, it is not an eigenvalue of $\bm{A}_1 \in \mathbb{C}^{(N-1)\times(N-1)}$.
Hence, by \cite[Theorem 8.2.8(e)]{horn12}, there exists $\mu > 0$ such that
\begin{align}
\label{eq.gap.sprad}
\lambda \geq |\hat{\lambda}| + \mu \quad \mbox{if $\hat{\lambda} \in \mathbb{C}$ is an eigenvalue of $\bm{A}_1$.}
\end{align}
Fix $\varepsilon \in (0, \mu/7)$ arbitrarily. Then, by \cite[Theorem 2.4.72]{horn12},
there exists a nonsingular matrix $\bm{S}_{\varepsilon} \in \mathbb{C}^{(N-1) \times (N-1)}$
such that $\bm{T}_{\varepsilon} := \bm{S}_{\varepsilon}^{-1}\bm{A}_1\bm{S}_{\varepsilon} \in \mathbb{C}^{(N-1) \times (N-1)}$ is upper triangular, the eigenvalues of $\bm{A}_1$ are on the main diagonal of $\bm{T}_{\varepsilon}$, and
\begin{align}
\label{eq.offdiagTeps}
(\forall i \in \{1,\ldots,N-2\})\quad \sum_{j = i+1}^{N-1} |t_{i,j}^{(\varepsilon)}| \leq \varepsilon,
\end{align}
where $t_{i,j}^{(\varepsilon)}$ denotes the $(i,j)$ entry of $\bm{T}_{\varepsilon}$.

Let $\bm{d} \in \real^{N}$. Since $\varrho_{\signal{M}}(\signal{x}+\bm{d}) = \rho(\bm{A}+\mathrm{diag}(\bm{d})\bm{M})$, we consider the eigenvalues of $\bm{A}+\mathrm{diag}(\bm{d})\bm{M}$. Performing the similarity transformation with $\bm{S}$ and $\bm{S}_{\varepsilon}$, we obtain
\begin{multline*}
\begin{bmatrix}
1 &\bm{0}^{\mathsf{T}}\\
\bm{0} &\bm{S}_{\varepsilon}^{-1}
\end{bmatrix}
\bm{S}^{-1}(\bm{A}+\mathrm{diag}(\bm{d})\bm{M})\bm{S}
\begin{bmatrix}
1 &\bm{0}^{\mathsf{T}}\\
\bm{0} &\bm{S}_{\varepsilon}
\end{bmatrix}\\
=
\begin{bmatrix}
\lambda + \bm{d}^{\mathsf{T}}\mathrm{diag}(\tilde{\bm{\eta}})\bm{M}\bm{\xi}
&\tilde{\bm{\eta}}^{\mathsf{T}}\bm{D}\bm{M}\bm{S}_1\bm{S}_{\varepsilon}\\
\bm{S}_{\varepsilon}^{-1}\bm{Z}_1^{\mathsf{H}}\bm{D}\bm{M}\bm{\xi}
&\bm{T}_{\varepsilon} + \bm{S}_{\varepsilon}^{-1}\bm{Z}_1^{\mathsf{H}}\bm{D}\bm{M}\bm{S}_1\bm{S}_{\varepsilon}
\end{bmatrix},
\end{multline*}
where we let $\bm{D} := \mathrm{diag}(\bm{d}) \in \mathbb{R}^{N \times N}$.
Let $\| \cdot \|_{\infty}$ denotes the $\ell_{\infty}$ operator norm, i.e., for every $\bm{X} =[x_{i,j}] \in \mathbb{C}^{I \times J}$,
\begin{align*}
\|\bm{X}\|_{\infty} := \max_{\bm{u} \in \mathbb{C}^{J}\setminus\{\bm{0}\}} \frac{\|\bm{X}\bm{u}\|_{\infty}}{\|\bm{u}\|_{\infty}} = \max_{i \in \{1,\ldots,I\}} \sum_{j=1}^{J} |x_{i,j}|.
\end{align*}
By equivalence of norms on $\real^N$, there exists $C > 0$ such that
\begin{align}
\label{equiv.norm}
(\forall \bm{u} \in \mathbb{R}^{N})\quad \|\bm{u}\|_{\infty} \leq C \|\bm{u}\|_2.
\end{align}
For the arbitrarily fixed $\varepsilon \in (0, \mu/7)$, there exists $r > 0$ such that
\begin{align}
\label{ieq.bound.r}
rC\|\tilde{\bm{\eta}}^{\mathsf{T}} \|_{\infty}\| \bm{M}\bm{S}_1\bm{S}_{\varepsilon} \|_{\infty} < \varepsilon.
\end{align}
Note that we can choose such $r$ independently of $\bm{d}$ because $\tilde{\bm{\eta}}$ and $\bm{M}\bm{S}_1\bm{S}_{\varepsilon}$ are independent of $\bm{d}$. We perform the final similarity transformation:
\begin{multline*}
\begin{bmatrix}
1 &\bm{0}^{\mathsf{T}}\\
\bm{0} &r^{-1}\bm{S}_{\varepsilon}^{-1}
\end{bmatrix}
\bm{S}^{-1}(\bm{A}+\mathrm{diag}(\bm{d})\bm{M})\bm{S}
\begin{bmatrix}
1 &\bm{0}^{\mathsf{T}}\\
\bm{0} &r\bm{S}_{\varepsilon}
\end{bmatrix}\\
=
\begin{bmatrix}
\lambda + \bm{d}^{\mathsf{T}}\mathrm{diag}(\tilde{\bm{\eta}})\bm{M}\bm{\xi}
&r\tilde{\bm{\eta}}^{\mathsf{T}}\bm{D}\bm{M}\bm{S}_1\bm{S}_{\varepsilon}\\
r^{-1}\bm{S}_{\varepsilon}^{-1}\bm{Z}_1^{\mathsf{H}}\bm{D}\bm{M}\bm{\xi}
&\bm{T}_{\varepsilon} + \bm{S}_{\varepsilon}^{-1}\bm{Z}_1^{\mathsf{H}}\bm{D}\bm{M}\bm{S}_1\bm{S}_{\varepsilon}
\end{bmatrix} =: \bm{B}.
\end{multline*}

Now we consider the Ger\v{s}gorin discs associated with the matrix $\bm{B}$. Independently of $\bm{d}$, we can choose $\delta_1 > 0$, $\delta_2 > 0$, and $\delta_3 > 0$ such that 
\begin{align}
\label{ieq.delta1}
\delta_1\|\mathrm{diag}(\tilde{\bm{\eta}})\bm{M}\bm{\xi} \|_2 < \varepsilon,\\
\label{ieq.delta2}
\delta_2 C\|r^{-1}\bm{S}_{\varepsilon}^{-1}\bm{Z}_1^{\mathsf{H}} \|_{\infty} \| \bm{M}\bm{\xi}\|_{\infty} < \varepsilon,\\
\label{ieq.delta3}
\delta_3 C\| \bm{S}_{\varepsilon}^{-1}\bm{Z}_1^{\mathsf{H}} \|_{\infty} \|\bm{M}\bm{S}_1\bm{S}_{\varepsilon} \|_{\infty} < \varepsilon.
\end{align}
In addition, since $\bm{A}$ is positive, we can choose $\delta_4 > 0$ such that,
 if $\bm{d} \in \mathbb{R}^{N}$ satisfies $\|\bm{d}\|_2 < \delta_4$, then
$\bm{A}+\mathrm{diag}(\bm{d})\bm{M}$ is also positive.
Let $\delta := \min\{\delta_1,\delta_2,\delta_3,\delta_4, 1\}$, and suppose that
$\bm{d} \in \mathbb{R}^{N}$ satisfies $\|\bm{d}\|_2 < \delta$.
Then, the Ger\v{s}gorin disc $\mathcal{G}_1$ associated with the first row of the matrix $\bm{B} = [b_{i,j}]$ satisfies
\begin{align}
\nonumber
\mathcal{G}_1 :={}& \{\bm{z} \in \mathbb{C} \mid |z - b_{1,1} | \leq \|r\tilde{\bm{\eta}}^{\mathsf{T}}\bm{D}\bm{M}\bm{S}_1\bm{S}_{\varepsilon}\|_{\infty} \}\\
\label{eq.bound1.gdisc1}
\subset {}& \{\bm{z} \in \mathbb{C} \mid |z - b_{1,1} | < \varepsilon \|\bm{d}\|_{2} \}
\end{align}
because
\begin{multline}
\label{derivation.bound.linfnorm}
\|r\tilde{\bm{\eta}}^{\mathsf{T}}\bm{D}\bm{M}\bm{S}_1\bm{S}_{\varepsilon}\|_{\infty} 
\overset{\text{(a)}}\leq r \|\tilde{\bm{\eta}}^{\mathsf{T}} \|_{\infty}\|\bm{D}\|_{\infty} \| \bm{M}\bm{S}_1\bm{S}_{\varepsilon} \|_{\infty}\\
\overset{\text{(b)}}\leq rC\|\tilde{\bm{\eta}}^{\mathsf{T}} \|_{\infty}\| \bm{M}\bm{S}_1\bm{S}_{\varepsilon} \|_{\infty} \|\bm{d} \|_2
\overset{\text{(c)}}< \varepsilon \|\bm{d} \|_2,
\end{multline}
where (a) follows from submultiplicativity of the $\ell_{\infty}$ operator norm,
(b) from  $\|\bm{D}\|_{\infty} = \|\bm{d}\|_{\infty}$ and \eqref{equiv.norm}, and (c) from \eqref{ieq.bound.r}.
Furthermore, since $b_{1,1} = \lambda + \bm{d}^{\mathsf{T}}\mathrm{diag}(\tilde{\bm{\eta}})\bm{M}\bm{\xi}$ by construction, we have
$$
|b_{1,1} - \lambda | = |\bm{d}^{\mathsf{T}}\mathrm{diag}(\tilde{\bm{\eta}})\bm{M}\bm{\xi}|
\overset{\text{(a)}}\leq \|\bm{d}\|_2\|\mathrm{diag}(\tilde{\bm{\eta}})\bm{M}\bm{\xi}\|_2 \overset{\text{(b)}}< \varepsilon,
$$
where (a) follows from the Cauchy-Schwarz inequality and (b) from $\|\bm{d}\|_2 < \delta_1$ and \eqref{ieq.delta1}.
Combining this inequality with \eqref{eq.bound1.gdisc1}, since $\|\bm{d}\|_2 < 1$ holds by assumption, we obtain
\begin{align}
\label{eq.bound2.gdisc1}
\mathcal{G}_1 \subset {}& \{\bm{z} \in \mathbb{C} \mid |z - \lambda | < 2 \varepsilon \}.
\end{align}
Next, for each $i \in \{ 2,3,\ldots,N\}$, we consider the Ger\v{s}gorin disc $\mathcal{G}_i$ associated with the $i$th row of $\bm{B}$.
Similarly to the derivation in \eqref{derivation.bound.linfnorm}, using \eqref{ieq.delta2} and $\|\bm{d}\|_2 < \delta_2$, we obtain
\begin{multline*}
\|r^{-1}\bm{S}_{\varepsilon}^{-1}\bm{Z}_1^{\mathsf{H}}\bm{D}\bm{M}\bm{\xi} \|_{\infty}\\
\leq  \|\bm{d}\|_2 C\|r^{-1}\bm{S}_{\varepsilon}^{-1}\bm{Z}_1^{\mathsf{H}} \|_{\infty} \| \bm{M}\bm{\xi}\|_{\infty} < \varepsilon.
\end{multline*}
Similarly, by \eqref{ieq.delta3} and $\|\bm{d}\|_2 < \delta_3$, we have
\begin{multline*}
\|\bm{S}_{\varepsilon}^{-1}\bm{Z}_1^{\mathsf{H}}\bm{D}\bm{M}\bm{S}_1\bm{S}_{\varepsilon}\|_{\infty}\\
\leq \|\bm{d}\|_2 C\| \bm{S}_{\varepsilon}^{-1}\bm{Z}_1^{\mathsf{H}} \|_{\infty} \|\bm{M}\bm{S}_1\bm{S}_{\varepsilon} \|_{\infty}
< \varepsilon.
\end{multline*}
In particular, this inequality implies that any entry of $\bm{S}_{\varepsilon}^{-1}\bm{Z}_1^{\mathsf{H}}\bm{D}\bm{M}\bm{S}_1\bm{S}_{\varepsilon}$ is less than $\varepsilon$, and hence
$$
(\forall i \in \{2,3, \ldots,N\})\quad |b_{i,i} - t_{i-1,i-1}^{(\varepsilon)}| < \varepsilon.
$$
We recall that $\bm{T}_{\varepsilon}$ is upper triangular, and the inequality shown in \eqref{eq.offdiagTeps} holds.
Altogether, for every $i \in \{ 2,3,\ldots,N\}$, we obtain
\begin{align}
\nonumber
\mathcal{G}_i :={}& \left\{\bm{z} \in \mathbb{C}~\left|~|z - b_{i,i} | \leq \sum_{j \in \{1,\ldots,N\}\setminus \{i\} } |b_{i,j}| \right.\right\}\\
\nonumber
\subset {}& \{\bm{z} \in \mathbb{C} \mid |z - b_{i,i} | < 3 \varepsilon \}\\
\label{eq.bound.gdisci}
\subset {}& \{\bm{z} \in \mathbb{C} \mid |z - t_{i-1,i-1}^{(\varepsilon)} | < 4 \varepsilon \}.
\end{align}
For every $i \in \{ 2,3,\ldots,N\}$,
since $t_{i-1,i-1}^{(\varepsilon)}$ is an eigenvalue of $\bm{A}_1$ by construction, using the inequality shown in \eqref{eq.gap.sprad},
we derive
$$
|\lambda - t_{i-1,i-1}^{(\varepsilon)}| \geq \lambda - |t_{i-1,i-1}^{(\varepsilon)}| \geq \mu.
$$
Combining this inequality with \eqref{eq.bound2.gdisc1} and \eqref{eq.bound.gdisci}, since $6\varepsilon < \mu$ holds by assumption, we deduce that the Ger\v{s}gorin disc $\mathcal{G}_1$ is disjoint from the other discs $\mathcal{G}_2,\ldots,\mathcal{G}_N$.
Hence, for every $\bm{d} \in \mathbb{R}^N$ satisfying $\|\bm{d}\|_2 < \delta$, by \eqref{eq.bound1.gdisc1} and the Ger\v{s}gorin theorem \cite[Theorem 6.1.1]{horn12}, there exists a unique eigenvalue,
say $\lambda(\bm{d})$, of $\bm{A}+\mathrm{diag}(\bm{d})\bm{M}$ such that $\lambda(\bm{d}) \in \mathcal{G}_1$ and
\begin{align}
\label{eq.lambda.linear.approx}
|\lambda(\bm{d}) - \lambda - \bm{d}^{\mathsf{T}}\mathrm{diag}(\tilde{\bm{\eta}})\bm{M}\bm{\xi}  | < \varepsilon\|\bm{d}\|_2.
\end{align}

Furthermore, under the assumed conditions, we show that $\lambda(\bm{d})$ is the spectral radius of $\bm{A}+\mathrm{diag}(\bm{d})\bm{M}$ as follows.
Let $\hat{\lambda}(\bm{d})$ be an eigenvalue of $\bm{A}+\mathrm{diag}(\bm{d})\bm{M}$ such that $\hat{\lambda}(\bm{d}) \neq \lambda(\bm{d})$. Then, since $\hat{\lambda}(\bm{d})$ should be contained in one of $\mathcal{G}_2,\ldots,\mathcal{G}_N$, the relation in \eqref{eq.bound.gdisci} implies
$$
(\exists i \in \{2,3, \ldots,N\}) \quad |\hat{\lambda}(\bm{d}) - t_{i-1,i-1}^{(\varepsilon)}| < 4\varepsilon.
$$
Hence, we have
$$
|\hat{\lambda}(\bm{d})| < |t_{i-1,i-1}^{(\varepsilon)}| + 4\varepsilon \overset{\text{(a)}}< \lambda - 2\varepsilon \overset{\text{(b)}}< |\lambda(\bm{d})|,
$$
where (a) holds by \eqref{eq.gap.sprad} and $6\varepsilon < \mu$,
and (b) by $\lambda(\bm{d}) \in \mathcal{G}_1$ and \eqref{eq.bound2.gdisc1}.
We recall that $\bm{A}+\mathrm{diag}(\bm{d})\bm{M}$ becomes positive
if $\|\bm{d}\|_2 < \delta_4$.
Altogether, for every $\bm{d} \in \mathbb{R}^{N}$ satisfying $\|\bm{d}\|_2 < \delta$, 
we have $\lambda(\bm{d}) = \rho(\bm{A} + \mathrm{diag}(\bm{d}) \bm{M}) > 0$ by standard Perron-Frobenius theory.

Finally, the desired result is shown by
\begin{multline*}
\lim_{0 \neq \|\bm{d}\|_2 \rightarrow 0}
\frac{\varrho_{\signal{M}}(\signal{x}+\bm{d})
- \varrho_{\signal{M}}(\signal{x})
-\langle\bm{d}, \mathrm{diag}(\tilde{\bm{\eta}})\bm{M}\bm{\xi} \rangle}
{\|\bm{d}\|_2}\\
= \lim_{0 \neq \|\bm{d}\|_2 \rightarrow 0}
\frac{\rho(\bm{A} + \mathrm{diag}(\bm{d}) \bm{M})
- \rho(\bm{A})
-\bm{d}^\mathsf{T} \mathrm{diag}(\tilde{\bm{\eta}})\bm{M}\bm{\xi}}
{\|\bm{d}\|_2}\\
= \lim_{0 \neq \|\bm{d}\|_2 \rightarrow 0}
\frac{\lambda(\bm{d})
- \lambda
-\bm{d}^\mathsf{T} \mathrm{diag}(\tilde{\bm{\eta}})\bm{M}\bm{\xi}}
{\|\bm{d}\|_2} = 0,
\end{multline*}
where the last equality follows from \eqref{eq.lambda.linear.approx} since $\varepsilon$ can be chosen from $(0, \mu/7)$ arbitrarily.

\section{Optimization algorithm for Problem \eqref{eq.sumrate.sinr}}
We present an efficient algorithm that provably converges to the global optimal value of Problem \eqref{eq.sumrate.sinr} under the following assumption.

\begin{assumption}
	\label{asmp.norm_inducing}
	In addition to Assumption \ref{asmp.G.hypTnorm},	
	 we assume that the function $\spradG\colon\real_{+}^{N}\to\real_{+}$ given in Definition \ref{def:spradG} is convex on $\real_{++}^N$.
\end{assumption}
Note that this assumption is stronger than Assumption \ref{asmp.convex.extension.rho.exp}
because convexity of $\spradG$ on $\real_{++}^N$
implies convexity of $\spradG\circ\mathfrak{E}\colon\real_{+}^{N}\to\real_{+}$ by \cite[Proposition 6]{renatoconvex}.

\subsection{Known results used in algorithm derivation}
In this subsection, we present the known results
used to derive the proposed algorithm for Problem \eqref{eq.sumrate.sinr}.

\begin{fact}[{\cite[Proposition 3]{renatoconvex}}]
	\label{fact.norm}
	Let a PHOC mapping $G\colon \real_{+}^N\to\real_{+}^N$ satisfy Assumption \ref{asmp.norm_inducing}.
    Then, for $C_G:=\{\signal{x}\in\real^N_{+}~\mid \spradG(\bm{x})\le 1\}$ and $S=\mathrm{conv}(C_G\cup -C_G)$, where $-C_G:=\{\signal{x}\in\real^N\mid -\signal{x}\in C_G\}$, the Minkowski or gauge functional of $S$, defined by
	\begin{align}
		\label{eq.minkowski}
		(\forall\signal{x}\in\real^N)\quad\|\signal{x}\|_G := \inf\{\gamma>0\mid (1/\gamma)\signal{x}\in S_G\},
	\end{align}
	is an order-preserving norm satisfying
	\begin{align}
	\label{eq.extention.rhodiagG}
	(\forall\signal{x}\in\real_{+}^N)\quad
	\|\signal{x}\|_G = \spradG(\bm{x}).
	\end{align}
\end{fact}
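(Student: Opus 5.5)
The plan is to show directly that the gauge $\|\cdot\|_G$ of $S := \mathrm{conv}(C_G\cup -C_G)$ is a norm, and then to identify it with $\spradG$ on $\real_+^N$. I read $S_G$ in \eqref{eq.minkowski} as $S$.

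\textbf{Step 1 (geometry of $C_G$).} First I record the properties of $C_G$ and $S$.
\begin{itemize}
\item $C_G$ is convex. This follows from convexity of $\spradG$ on $\real_{++}^N$ (Assumption \ref{asmp.norm_inducing}), which extends to $\real_+^N$ by continuity, Fact \ref{fact.qnorm}(iv).
\item $C_G$ is closed, again by Fact \ref{fact.qnorm}(iv).
\item $C_G$ is bounded by Lemma \ref{lemm.bounded}.
\item $C_G$ is downward closed in $\real_+^N$, by Fact \ref{fact.qnorm}(iii).
\item $C_G$ contains a neighborhood of $\signal{0}$ relative to $\real_+^N$. Since $\spradG$ is continuous with $\spradG(\signal{0})=0$, we have $\epsilon\,[0,1]^N\subset C_G$ for small $\epsilon>0$.
\end{itemize}
Consequently $S$ is compact, convex and symmetric. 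Moreover $S$ contains the cross-polytope $\mathrm{conv}\{\pm\epsilon\signal{e}_n\}$, so $\signal{0}\in\mathrm{int}\,S$.

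By the standard theory of Minkowski functionals, the gauge of a compact, convex, symmetric set with $\signal{0}$ in its interior is a norm. This gives that $\|\cdot\|_G$ is a norm.

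\textbf{Step 2 (key lemma).} The crucial structural fact is
\[
S\cap\real_+^N = C_G.
\]
The inclusion $\supseteq$ is immediate. For $\subseteq$, take $\signal{x}\in S\cap\real_+^N$ and write $\signal{x}=\alpha\signal{a}-(1-\alpha)\signal{b}$ with $\signal{a},\signal{b}\in C_G$ and $\alpha\in[0,1]$. Since $\signal{b}\ge\signal{0}$, this gives $\signal{0}\le\signal{x}\le\alpha\signal{a}$. Now $\alpha\signal{a}\in C_G$ because $C_G$ is convex and contains $\signal{0}$, and downward closedness then yields $\signal{x}\in C_G$. A general point of $S$ is a convex combination of finitely many points of $C_G\cup -C_G$, but grouping the positive and negative terms reduces it to the two-point form above, using convexity of $C_G$.

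\textbf{Step 3 (identification with $\spradG$).} For $\signal{x}\in\real_+^N$ and $\gamma>0$, the point $\signal{x}/\gamma$ lies in $\real_+^N$. By Step 2, $\signal{x}/\gamma\in S$ if and only if $\spradG(\signal{x}/\gamma)\le1$. By homogeneity, Fact \ref{fact.qnorm}(i), this holds if and only if $\spradG(\signal{x})\le\gamma$. Taking the infimum over such $\gamma$ gives \eqref{eq.extention.rhodiagG}; the case $\signal{x}=\signal{0}$ is trivial.

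\textbf{Step 4 (order-preservation).} If $\signal{0}\le\signal{x}\le\signal{y}$, then
\[
\|\signal{x}\|_G=\spradG(\signal{x})\le\spradG(\signal{y})=\|\signal{y}\|_G
\]
by Step 3 and Fact \ref{fact.qnorm}(iii). Note that the paper's definition of order-preserving only concerns nonnegative vectors, so nothing more is required.

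The main obstacle is Step 2. It is exactly where the cancellation of negative parts could enlarge the set, and it relies crucially on monotonicity of $\spradG$ and on $\signal{0}\in C_G$.
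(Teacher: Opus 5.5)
Your proof is correct. The paper does not prove this statement itself; it imports it from \cite[Proposition~3]{renatoconvex}, so there is no in-paper argument to compare against. What you give is a self-contained derivation that uses only results the paper establishes: Fact~\ref{fact.qnorm}(i)--(iv), Lemma~\ref{lemm.bounded}, and the continuity argument from the proof of Lemma~\ref{lemma.convext.rhoexp}(ii) that extends convexity from $\real_{++}^N$ to $\real_+^N$.

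The key step is your identity $S\cap\real_+^N=C_G$, and you use the hypotheses exactly where they are needed. Convexity of $C_G$ (that is, Assumption~\ref{asmp.norm_inducing}) collapses an arbitrary convex combination to the two-point form $\alpha\signal{a}-(1-\alpha)\signal{b}$. Monotonicity of $\spradG$, together with $\signal{0}\in C_G$, then absorbs the negative part.

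Two minor remarks. First, compactness of $S$ is more than the norm axioms require: boundedness of $S$ gives definiteness of the gauge, and $\signal{0}\in\mathrm{int}\,S$ gives finiteness. Second, restricting order preservation to $\real_+^N$ in Step~4 is necessary, not just convenient. The gauge of $\mathrm{conv}(C\cup -C)$, for a downward-closed convex $C\subset\real_+^N$, need not be an absolute norm. For instance, with $C=[0,1]^2$ the gauge is $\max\{|x_1|,|x_2|,|x_1-x_2|\}$, which gives $(1,-1)$ norm $2$ but $(1,1)$ norm $1$. So the paper's orthant-only definition of an order-preserving norm is exactly what can be claimed.
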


\begin{fact}[{\cite[Example 2]{renatoconvex}}]
\label{fact.maxrhoconvex}
Consider the setting of Example \ref{ex.maxlinearG}.
For every $l \in \{1,\ldots,L\}$,
let $G_l\colon \real_+^N\to\real_{+}^N\colon \signal{x}\mapsto \signal{M}_l\signal{x}$,
and suppose that the function $\varrho_{\signal{M}_l}\colon\real_{+}^{N}\to\real_{+}$ is convex on $\real_{++}^N$.
Then $G_l$ satisfies the assumptions in Fact \ref{fact.norm},
and hence
\begin{align}
\label{eq.normG.specradlinar}
(\forall\signal{x}\in\real_+^N)\quad\|\signal{x}\|_{G_{l}} = \varrho_{\signal{M}_l}(\signal{x})
\end{align}
holds for every $l \in \{1,\ldots,L\}$, where $\|\cdot\|_{G_{l}}$ is constructed as in \eqref{eq.minkowski}.
Moreover, $G$ given in Example \ref{ex.maxlinearG} satisfies Assumption \ref{asmp.norm_inducing}, and
we have
\begin{align}
\label{eq.max.Gnorm}
(\forall\signal{x}\in\real^{N})\quad\|\signal{x}\|_{G} = \max_{l\in\{1,\ldots,L\}} \|\signal{x}\|_{G_l}.
\end{align}
\end{fact}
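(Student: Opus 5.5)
The first two claims follow by checking hypotheses and invoking Fact~\ref{fact.norm}. The last claim, \eqref{eq.max.Gnorm} off the nonnegative orthant, is the real obstacle, and I am not confident it holds as stated.

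\textbf{Claims for each $G_l$.} Fix $l\in\{1,\ldots,L\}$. By Fact~\ref{fact.linearG}, $G_l$ satisfies the hypotheses of Fact~\ref{fact.qnorm}, i.e., Assumption~\ref{asmp.G.hypTnorm}. Together with the assumed convexity of $\varrho_{\signal{M}_l}$ on $\real_{++}^N$, this is exactly Assumption~\ref{asmp.norm_inducing} for $G_l$. Fact~\ref{fact.norm} then shows that $\|\cdot\|_{G_l}$ is an order-preserving norm, and \eqref{eq.extention.rhodiagG} is \eqref{eq.normG.specradlinar}.

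\textbf{Assumption~\ref{asmp.norm_inducing} for $G$.} Assumption~\ref{asmp.G.hypTnorm} holds by Example~\ref{ex.maxlinearG}. By \eqref{eq.rho.maxlin}, $\spradG$ is a pointwise maximum of the convex functions $\varrho_{\signal{M}_l}$, hence convex on $\real_{++}^N$. So Fact~\ref{fact.norm} also applies to $G$.

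\textbf{The identity \eqref{eq.max.Gnorm} on $\real_+^N$.} For $\signal{x}\in\real_+^N$ this is immediate: combine \eqref{eq.extention.rhodiagG}, \eqref{eq.rho.maxlin} and \eqref{eq.normG.specradlinar}.

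\textbf{The identity \eqref{eq.max.Gnorm} for general $\signal{x}\in\real^N$.} Write $C_G$ and $C_{G_l}$ for the respective sublevel sets. By \eqref{eq.rho.maxlin}, $C_G=\bigcap_l C_{G_l}$, so $\mathrm{conv}(C_G\cup -C_G)\subset \mathrm{conv}(C_{G_l}\cup -C_{G_l})$ for every $l$. Gauges reverse inclusions, so $\|\signal{x}\|_G\ge \max_l\|\signal{x}\|_{G_l}$ for all $\signal{x}$.

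For the reverse inequality I would use the following description of such gauges. Each $C_{G_l}$ and $C_G$ is a compact convex subset of $\real_+^N$ by Lemma~\ref{lemm.bounded}. It is also downward closed in $\real_+^N$, since the spectral radius functions are order-preserving. For such a set $C$ with gauge $p$ on $\real_+^N$, I would show
\begin{align*}
\text{gauge of } \mathrm{conv}(C\cup -C) \text{ at } \signal{x} \;=\; \inf\{p(\signal{y})+p(\signal{z}) \mid \signal{x}=\signal{y}-\signal{z},\ \signal{y},\signal{z}\in\real_+^N\}.
\end{align*}
Any admissible pair satisfies $\signal{y}\ge\signal{x}_+$ and $\signal{z}\ge\signal{x}_-$, so monotonicity of $p$ makes the infimum equal to $p(\signal{x}_+)+p(\signal{x}_-)$.

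This is where I expect the plan to fail. It gives $\|\signal{x}\|_G=\max_l\varrho_{\signal{M}_l}(\signal{x}_+)+\max_l\varrho_{\signal{M}_l}(\signal{x}_-)$, whereas $\max_l\|\signal{x}\|_{G_l}=\max_l\bigl(\varrho_{\signal{M}_l}(\signal{x}_+)+\varrho_{\signal{M}_l}(\signal{x}_-)\bigr)$. These agree whenever $\signal{x}_+=\signal{0}$ or $\signal{x}_-=\signal{0}$, i.e., on $\real_+^N\cup(-\real_+^N)$. For mixed-sign $\signal{x}$ they can differ if the maximizing indices differ.

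Before trying to prove the general case, I would re-derive the decomposition formula carefully and test a small example ($N=2$, $L=2$) to see whether \eqref{eq.max.Gnorm} genuinely holds on all of $\real^N$. If it fails, I would establish it on $\real_+^N\cup(-\real_+^N)$. That restricted version is all the SINR algorithm needs, since its iterates are positive, as in Theorem~\ref{theo.converge.rate}(ii).
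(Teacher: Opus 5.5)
The paper gives no proof of this Fact; it is imported from the cited reference. Your verification of the first two claims is correct, and so are the following steps:
\begin{itemize}
\item the identity \eqref{eq.max.Gnorm} on $\real_+^N$;
\item the global inequality $\|\signal{x}\|_G\ge\max_l\|\signal{x}\|_{G_l}$;
\item the formula $\|\signal{x}\|_G=\spradG(\signal{x}_+)+\spradG(\signal{x}_-)$, and likewise for each $G_l$.
\end{itemize}
Your suspicion is also correct. With $\|\cdot\|_G$ defined as the gauge of $\mathrm{conv}(C_G\cup -C_G)$, as in Fact~\ref{fact.norm}, \eqref{eq.max.Gnorm} fails off $\real_+^N\cup(-\real_+^N)$, so that part cannot be proved.

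Here is a counterexample in the paper's own per-user-power setting. Take $N=L=2$, $\signal{a}_l=\signal{e}_l$ (the standard basis vectors), $m_{11}=m_{22}=1$, $m_{12}=m_{21}=\epsilon$, and $\signal{u}=(\epsilon,\epsilon)^{\mathsf{T}}$ with $\epsilon\in(0,1)$. Then $\det\signal{M}_l=1+\epsilon-2\epsilon^2>0$. For a positive $2\times 2$ matrix $\signal{A}$ one has $\varrho_{\signal{A}}(\signal{x})=\tfrac12(a_{11}x_1+a_{22}x_2)+\sqrt{Q(\signal{x})}$, where $Q(\signal{x})=\tfrac14(a_{11}x_1-a_{22}x_2)^2+a_{12}a_{21}x_1x_2$. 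This quadratic form is positive semidefinite when $\det\signal{A}\ge 0$, so each $\varrho_{\signal{M}_l}$ is convex and all hypotheses hold. Since $\mathrm{diag}(\signal{e}_i)\signal{M}_l$ has a single nonzero row, $\varrho_{\signal{M}_l}(\signal{e}_i)=(\signal{M}_l)_{ii}=1+\epsilon\,\delta_{il}$. At $\signal{x}=\signal{e}_1-\signal{e}_2$ your formula therefore gives $\|\signal{x}\|_G=2+2\epsilon$, but $\|\signal{x}\|_{G_1}=\|\signal{x}\|_{G_2}=2+\epsilon$. The same computation with $\signal{a}_l=\signal{e}_l/p_{\max}$ and any $N\ge 2$ shows the identity fails at $\signal{e}_1-\signal{e}_2$ in every instance satisfying the hypotheses. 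The discrepancy is $\min\{u_1,u_2\}/p_{\max}$.

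What is true is that $\|\cdot\|_G\ge\max_l\|\cdot\|_{G_l}$ on $\real^N$, with equality on $\real_+^N\cup(-\real_+^N)$. One correction to your closing remark: equality on the orthants is not by itself what Proposition~\ref{prop.subgrad.sinr.maxlinG} needs. A subgradient inequality at a positive $\signal{s}$ is tested against every $\signal{y}\in\real^N$, and Fact~\ref{fact.subdifmaxconv}, as invoked there, requires the max identity globally. What rescues that proposition is your global inequality together with equality at $\signal{s}$. For $\signal{g}\in\partial\|\cdot\|_{G_{l^\star}}(\signal{s})$ and all $\signal{y}\in\real^N$,
\[
\|\signal{y}\|_G\ge\|\signal{y}\|_{G_{l^\star}}\ge\|\signal{s}\|_{G_{l^\star}}+\langle\signal{g},\signal{y}-\signal{s}\rangle=\|\signal{s}\|_G+\langle\signal{g},\signal{y}-\signal{s}\rangle .
\]
Alternatively, the Fact becomes true as stated if the norms are taken to be $\signal{x}\mapsto\spradG(|\signal{x}|)$ and $\signal{x}\mapsto\varrho_{\signal{M}_l}(|\signal{x}|)$. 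These are also norms extending the spectral radius functions: use $|\signal{x}+\signal{y}|\le|\signal{x}|+|\signal{y}|$, order preservation, and subadditivity from convexity plus positive homogeneity. For them, \eqref{eq.max.Gnorm} on all of $\real^N$ is immediate from \eqref{eq.rho.maxlin}.
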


\subsection{Reformulation to a problem over Euclidean space}
To apply the HSD method, under Assumption \ref{asmp.norm_inducing}, we translate Problem \eqref{eq.sumrate.sinr} into an equivalent convex optimization problem
composed of convex functions from $\real^N$ to $\real$.
If $G$ satisfies Assumption \ref{asmp.norm_inducing},
then by Fact \ref{fact.norm}, the function $\spradG\colon \real_{+}^{N}\rightarrow\real_{+}$ can be extended to
the norm $\|\cdot\|_{G}$ given by \eqref{eq.minkowski}, which coincides with $\spradG$ on $\real_{+}^{N}$.
Meanwhile, the value of the cost function $-\sum_{n \in \mathcal{N}}w_n\log(1 + s_n)$ in Problem \eqref{eq.sumrate.sinr}
is well defined if $1+s_n > 0$ for every $n \in \mathcal{N}$.
Note that, since the constraint set $\mathcal{S}$ is a subset of $\real_{+}^{N}$ (see \eqref{eq.def.regionsinr}),
it is sufficient if the extended function
coincides with $-\sum_{n \in \mathcal{N}}w_n\log(1 + s_n)$ on $\real_{+}^{N}$.
Specifically, we use the following simple extension.
\begin{lemma}
\label{lemma.LipGradCost.sinr}
With $\signal{w} \in \real_{++}^{N}$, we define
	\begin{align*}
		&\Phi\colon \real^{N}\to\real: \signal{s} \mapsto \sum_{n \in \mathcal{N}}w_n\phi(s_n),\\
		&\phi\colon \real \to \real : s \mapsto \begin{cases}
					-\log (1+s), &\text{if}~ s \ge 0;\\
					-s, &\text{if}~s < 0.
				\end{cases}
	\end{align*}
Then $\Phi$ satisfies the following properties.
\begin{enumerate}[\itemsep=2pt]
 \item[(i)] $(\forall \signal{s} \in \real_{+}^{N})~\Phi(\signal{s}) = -\sum_{n \in \mathcal{N}}w_n\log(1 + s_n)$;
 
\item[(ii)] $\Phi$ is differentiable and convex on $\real^{N}$, and strictly convex on $\real_{+}^{N}$; and

\item[(iii)] The gradient of $\Phi$ on $\real_{+}^{N}$ is given by
\begin{align}
\label{eq.grad.sumrate.sinr}
(\signal{s} \in \real_{+}^{N})\quad \nabla\Phi(\signal{s}) = -\left(\frac{w_n}{1+s_n}\right)_{n \in \mathcal{N}},
\end{align}
and it is Lipschitz continuous on $(\real_{+}^{N}, \|\cdot\|_2)$.
\end{enumerate}
\end{lemma}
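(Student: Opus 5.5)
The plan is to reduce everything to the scalar function $\phi$, since $\Phi(\signal{s})=\sum_{n\in\mathcal{N}}w_n\phi(s_n)$ is a positively weighted separable sum. Claim (i) is then immediate from the definition: for $s_n\ge 0$ we have $\phi(s_n)=-\log(1+s_n)$.

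For (ii), I first show that $\phi$ is differentiable on $\real$. On $(0,\infty)$ we have $\phi'(s)=-1/(1+s)$, and on $(-\infty,0)$ we have $\phi'(s)=-1$. At $s=0$ both pieces take the value $0$, so $\phi$ is continuous there. The one-sided derivatives are $-1/(1+0)=-1$ from the right and $-1$ from the left, so $\phi$ is differentiable at $0$ with $\phi'(0)=-1$. Hence $\phi'$ is continuous and equals $s\mapsto-1/(1+\max\{s,0\})$.

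This $\phi'$ is nondecreasing: it is constant on $(-\infty,0]$ and increasing on $[0,\infty)$. A differentiable function on $\real$ with nondecreasing derivative is convex, so $\phi$ is convex. It is strictly convex on $[0,\infty)$ because $\phi'$ is strictly increasing there; equivalently, $\phi''(s)=1/(1+s)^2>0$ for $s>0$, and continuity at the endpoint $0$ covers that case.

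$\Phi$ is then differentiable and convex as a positive combination of differentiable convex functions of separate coordinates. For strict convexity on $\real_+^N$, take $\signal{x}\neq\signal{y}$ in $\real_+^N$. Some coordinate $n$ has $x_n\neq y_n$, and that term gives a strict inequality since $w_n>0$. The remaining terms give non-strict inequalities, so the sum is strict.

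For (iii), the gradient formula \eqref{eq.grad.sumrate.sinr} follows coordinatewise from $\phi'$. For Lipschitz continuity on $\real_+^N$, I use that for $s,t\ge0$
\[
\left|\frac{1}{1+s}-\frac{1}{1+t}\right|=\frac{|s-t|}{(1+s)(1+t)}\le|s-t|.
\]
This gives
\[
\|\nabla\Phi(\signal{s})-\nabla\Phi(\signal{t})\|_2\le\max_{n\in\mathcal{N}}w_n\,\|\signal{s}-\signal{t}\|_2 .
\]
In fact the same bound holds on all of $\real^N$, because $\phi'$ is $1$-Lipschitz everywhere.

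The only point requiring care is differentiability of $\phi$ at $0$, i.e., checking that the slopes of the two pieces match. Everything else is routine.
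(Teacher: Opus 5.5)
Your proposal is correct and follows the paper's proof essentially step by step: you match the one-sided derivatives of $\phi$ at $0$, deduce convexity from the monotone derivative, and bound $\bigl|1/(1+s)-1/(1+t)\bigr|\le|s-t|$ on $\real_+$ to get the Lipschitz constant $\max_{n\in\mathcal{N}}w_n$. You also spell out the coordinatewise strict-convexity argument that the paper calls clear, and you correctly describe $\phi'$ as nondecreasing rather than increasing.
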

\begin{proof}

(i) Clear from the definition of $\Phi$.

(ii) It is clear that $\phi$ is continuously differentiable at $s \in \real \setminus \{0\}$.
Since the left and right derivatives of $\phi$ at $0$ are the same,
we deduce that $\phi$ is continuously differentiable on $\real$, implying that
$\Phi$ is Fr\'{e}chet differentiable on $\real^{N}$.
The derivative of $\phi$ is given by
\begin{align}
\label{eq.derivative.phi}
\quad \phi' (s) := \begin{cases}
					-1/(1+s), &\text{if}~ s \ge 0;\\
					-1, &\text{if}~s < 0.
				\end{cases}
\end{align}
Since $\phi'$ is increasing on $\real$, $\phi$ is convex on $\real$, and so is $\Phi$ on $\real^{N}$
because of $\signal{w} \in \real_{++}^{N}$.
Strict convexity of $\Phi$ on $\real_{+}^{N}$ is clear from the definition.

(iii) The expression in \eqref{eq.grad.sumrate.sinr} is immediate from \eqref{eq.derivative.phi}.
For every $\signal{x}$ and $\signal{y}$ in $\real_{+}^{N}$, we have
	\begin{align*}
		&\|\nabla\Phi(\signal{x}) - \nabla\Phi(\signal{y})  \|_2^2 
		=\sum_{n \in \mathcal{N}}\left|-\frac{w_n}{1+x_n} + \frac{w_n}{1+y_n}\right|^2\\
		&=\sum_{n \in \mathcal{N}}\left|\frac{w_n(x_n-y_n)}{(1+x_n)(1+y_n)}\right|^2
		\le\sum_{n \in \mathcal{N}}w_n^2 |x_n-y_n|^2,
	\end{align*}
showing that $\nabla\Phi$ is Lipschitz continuous on $(\real_{+}^{N}, \|\cdot \|_2)$
with Lipschitz constant $\kappa = \max_{n \in \mathcal{N}} w_n$.
\end{proof}

Under Assumption \ref{asmp.norm_inducing},
using $\|\cdot\|_G$ given in Fact \ref{fact.norm} and $\Phi$ given in Lemma \ref{lemma.LipGradCost.sinr},
we reformulate Problem \eqref{eq.sumrate.sinr} to
 \begin{align}
	\label{eq.refsumrate.frsinr}\tag{S'}
	\begin{array}{cl}
		\text{miminize} &  \Phi(\signal{s})  \\
		\text{subject~to} & \signal{s}\in [0, b]^{N}\cap \mathrm{lev}_{\le 1}(\|\cdot\|_G),
	\end{array}
\end{align}
where we also introduce $[0, b]^{N}$ similarly to the design of Problem \eqref{eq.refsumrate.frrate}.
In the following proposition, under Assumption \ref{asmp.norm_inducing}, we  show that
Problems \eqref{eq.sumrate.sinr} and \eqref{eq.refsumrate.frsinr}
are guaranteed to
have the same unique solution if $b$ is large enough.

\begin{proposition}
\label{prop.existence.equivalence.prob.sinr.box}
Under Assumption \ref{asmp.norm_inducing},
Problem \eqref{eq.refsumrate.frsinr} has a unique solution for any $b > 0$.
Moreover, if $b$ is large enough,
the unique solution to Problem \eqref{eq.refsumrate.frsinr}
is the same as the unique solution to Problem \eqref{eq.sumrate.sinr}.
\end{proposition}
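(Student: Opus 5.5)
The proof will mirror that of Proposition \ref{prop.existence.equivalence.prob.rate}, with strict convexity of $\Phi$ on $\real_{+}^{N}$ supplying uniqueness.

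\textbf{Existence and compactness.} Fix $b > 0$. By Fact \ref{fact.norm}, $\|\cdot\|_G$ is a norm on $\real^N$. It is therefore convex and continuous, so $\mathrm{lev}_{\le 1}(\|\cdot\|_G)$ is closed and convex. It contains $\signal{0}$, since $\|\signal{0}\|_G = 0$. Hence the feasible set $[0,b]^{N}\cap\mathrm{lev}_{\le 1}(\|\cdot\|_G)$ is nonempty, compact and convex. By Lemma \ref{lemma.LipGradCost.sinr}(ii), $\Phi$ is differentiable and therefore continuous, so by Weierstrass a minimizer exists.

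\textbf{Uniqueness.} The feasible set lies in $[0,b]^N \subset \real_{+}^{N}$, and $\Phi$ is strictly convex on $\real_{+}^{N}$ by Lemma \ref{lemma.LipGradCost.sinr}(ii). Suppose $\signal{s}_1 \neq \signal{s}_2$ were both minimizers. Their midpoint is feasible by convexity of the feasible set, and strict convexity gives it a strictly smaller value of $\Phi$, a contradiction. So the solution is unique for every $b > 0$.

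\textbf{Equivalence for large $b$.} By \eqref{eq.extention.rhodiagG} and \eqref{eq.def.regionsinr},
$$\mathcal{S} = \real_{+}^{N}\cap\mathrm{lev}_{\le 1}(\|\cdot\|_G).$$
By Lemma \ref{lemm.bounded}, which applies because Assumption \ref{asmp.norm_inducing} includes Assumption \ref{asmp.G.hypTnorm}, the set $\mathcal{S}$ is bounded. Hence there is $c>0$ such that $\mathcal{S} \subset [0,b]^N$ for every $b \ge c$. For such $b$,
$$[0,b]^{N}\cap\mathrm{lev}_{\le 1}(\|\cdot\|_G) = \mathcal{S}.$$
On $\real_{+}^{N}$, the objective $\Phi$ agrees with $-\sum_{n} w_n\log(1+s_n)$ by Lemma \ref{lemma.LipGradCost.sinr}(i). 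The two problems therefore have the same feasible set and the same objective on it, so they share the same solution set. By the previous step this is a single point; in particular Problem \eqref{eq.sumrate.sinr} has a unique solution.

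The only point needing care is that uniqueness relies on strict convexity on $\real_{+}^{N}$ rather than on all of $\real^N$. This suffices because the feasible set is contained in $\real_{+}^{N}$.
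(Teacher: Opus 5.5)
Your proof is correct and follows the paper's argument essentially step for step. You get nonemptiness, compactness and convexity of the feasible set of \eqref{eq.refsumrate.frsinr} from Fact~\ref{fact.norm}, uniqueness from strict convexity of $\Phi$ on $\real_{+}^{N}$ (Lemma~\ref{lemma.LipGradCost.sinr}(ii)), and, for large $b$, the identity $\mathcal{S} = [0,b]^{N}\cap\mathrm{lev}_{\le 1}(\|\cdot\|_G)$ from Lemma~\ref{lemm.bounded} and \eqref{eq.extention.rhodiagG}; you only make explicit the Weierstrass and midpoint steps that the paper leaves implicit.
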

\begin{proof}
By Fact~\ref{fact.norm}, for any $b > 0$,
$[0, b]^{N} \cap \mathrm{lev}_{\le 1}(\|\cdot\|_G)$ is a nonempty compact convex set.
Hence, by Lemma \ref{lemma.LipGradCost.sinr}(ii),
Problem \eqref{eq.refsumrate.frsinr}
is minimization of the strictly convex function over the nonempty compact convex set,
implying that a solution uniquely exists for Problem \eqref{eq.refsumrate.frsinr}.
Next, we show that the solution sets of Problems \eqref{eq.sumrate.sinr}
and \eqref{eq.refsumrate.frsinr} are the same if $b$ is sufficiently large.
Since the constraint set $\mathcal{S}$ defined by \eqref{eq.def.regionsinr}
is bounded by Lemma \ref{lemm.bounded} and a subset of $\real_{+}^{N}$,
there exists $c > 0$ such that
\begin{align*}
(\forall b \ge c)\quad \mathcal{S} = [0, b]^{N} \cap \mathcal{S}
= [0, b]^{N}\cap\mathrm{lev}_{\le 1}(\|\cdot\|_G),
\end{align*}
where the last equality holds by \eqref{eq.extention.rhodiagG} in Fact~\ref{fact.norm}.
By this equality and Lemma \ref{lemma.LipGradCost.sinr}(i), we
obtain the desired result.
\end{proof}

\subsection{Optimization algorithm}
We derive the proposed algorithm for Problem \eqref{eq.sumrate.sinr}
by applying the subgradient-projection-based HSD method shown in Fact \ref{fact.hsdmsp} to Problem \eqref{eq.refsumrate.frsinr}.
With an initial point $\signal{s}_{1} \in \real^{N}_{++}$
and a slowly decreasing step-size
$(\mu_k)_{k\in\mathbb{N}} \subset \real_{++}^{N}$,
the proposed
algorithm for Problem \eqref{eq.refsumrate.frsinr} generates the sequence $(\signal{s}_{k})_{k \in \mathbb{N}}$ by
\begin{align}
\label{eq:HSDMmaxsumrate.sinr}
\nonumber
&\text{For}~k = 1, 2, \ldots\\
&\left\lfloor\begin{aligned}
&\text{Choose}~\signal{g}_k \in \partial \|\cdot\|_G(\signal{s}_k),\\
&\hat{\signal{s}}_{k} := \begin{cases}
\bm{s}_{k} - (\|\signal{s}_k\|_G-1)\signal{g}_k/\|\signal{g}_k\|_2^2,
& \text{if}~ \|\signal{s}_k\|_G > 1;\\
\bm{s}_{k},
& \text{if}~ \|\signal{s}_k\|_G \le 1,\\
\end{cases}\\
&\tilde{\signal{s}}_{k} := P_{[0,b]^{N}}(\hat{\signal{s}}_{k}),\\
&\signal{s}_{k+1} := \tilde{\signal{s}}_{k} - \mu_{k}\nabla\Phi(\tilde{\signal{s}}_{k}).
\end{aligned}\right.
\end{align}
An efficient method for finding $\bm{g}_k$ in the scenarios considered in Example \ref{ex.maxlinearG} is given in the next subsection.
The projection mapping $P_{[0, b]^{N}}\colon \real^{N}\to[0, b]^{N}$ can be computed by
\eqref{eq.def.projectionbox}.
For every $k \in \mathbb{N}$,
since $\tilde{\signal{s}}_{k} = P_{[0,b]^{N}}(\hat{\signal{s}}_{k}) \in \real_{+}^{N}$,
the gradient $\nabla\Phi(\tilde{\signal{s}}_{k})$ can be computed by the expression in \eqref{eq.grad.sumrate.sinr}.

Under Assumption \ref{asmp.norm_inducing},
we establish the convergence of $(\signal{s}_{k})_{k \in \mathbb{N}}$
to the unique solution $\signal{s}^{\star}$ to Problem \eqref{eq.refsumrate.frsinr}
in the following theorem, where we also show
that the sum-rate computed for SINR values $(\signal{s}_{k})_{k \in \mathbb{N}}$ converges
to (the negative of) the global optimal value of Problem
\eqref{eq.sumrate.sinr} -- and hence that of the original
problem \eqref{eq.sumrate.power} -- if $b$ is large enough.

\begin{theorem}
\label{theo.converge.sinr}
Let $\signal{w} \in \real_{++}^{N}$,
$b > 0$,
$\signal{s}_1 \in \real_{++}^{N}$, and
$(\mu_k)_{k\in\mathbb{N}} \subset \real_{++}^{N}$ satisfy
$\lim_{k\to\infty}\mu_k = 0$ and
$\sum_{k\in\mathbb{N}}\mu_k = \infty$.
Then, under Assumption \ref{asmp.norm_inducing}, each of the following holds for $(\signal{s}_{k})_{k \in \mathbb{N}}$ generated by \eqref{eq:HSDMmaxsumrate.sinr}.
\begin{enumerate}[\itemsep=2pt]
\item[(i)] $\lim_{k \rightarrow \infty} \signal{s}_{k} = \signal{s}^{\star} \in \argmin_{\signal{s} \in [0, b]^{N}\cap\mathrm{lev}_{\le 1}(\|\cdot\|_G)}\Phi(\signal{s})$;
\item[(ii)] $(\forall k \in \mathbb{N})~\signal{s}_{k} \in \real_{++}^{N}$;
\item[(iii)] $\lim_{k \rightarrow \infty} d_2(\signal{s}_{k}, \mathcal{S}) = 0$; and
\item[(iv)] If $b$ is large enough, then
$$
\lim_{k \rightarrow \infty}\sum_{n \in \mathcal{N}}w_n\log(1 + s_{k,n})
= \max_{\signal{s} \in \mathcal{S}}\sum_{n \in \mathcal{N}}w_n\log(1 + s_{n}),
$$
where we denote $\signal{s}_k = (s_{k,n})_{n \in \mathcal{N}}$ for every $k \in \mathbb{N}$.
\end{enumerate}
\end{theorem}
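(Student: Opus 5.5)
The plan is to mirror the proof of Theorem \ref{theo.converge.rate}, applying Fact \ref{fact.hsdmsp} with $h = \|\cdot\|_G$, $\mathcal{K} = [0,b]^N$ and $\Theta = \Phi$, and then using uniqueness of the solution (Proposition \ref{prop.existence.equivalence.prob.sinr.box}) to upgrade distance convergence to convergence of the iterates.

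For (i), I would verify conditions (a1) and (a2) of Fact \ref{fact.hsdmsp}.

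For (a1): $\|\cdot\|_G$ is a norm by Fact \ref{fact.norm}, hence a real-valued convex function on $\real^N$. The set $[0,b]^N$ is compact and convex. Since $\|\signal{0}\|_G = 0 \le 1$, we have $\signal{0} \in [0,b]^N \cap \mathrm{lev}_{\le 1}(\|\cdot\|_G)$, so the intersection is nonempty.

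For (a2): Lemma \ref{lemma.LipGradCost.sinr}(ii) gives that $\Phi$ is convex and differentiable on $\real^N$. Lemma \ref{lemma.LipGradCost.sinr}(iii) gives that $\nabla\Phi$ is Lipschitz continuous on $\real_+^N \supset [0,b]^N$. This is the only point that differs from the rate case: Lipschitz continuity of $\nabla\Phi$ is not available on all of $\real^N$, but Fact \ref{fact.hsdmsp} only requires it on $\mathcal{K}$, so the inclusion $[0,b]^N \subset \real_+^N$ is exactly what is needed.

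Fact \ref{fact.hsdmsp} then yields $d_2(\signal{s}_k, \Omega) \to 0$ with $\Omega = \argmin_{\signal{s} \in [0,b]^N \cap \mathrm{lev}_{\le 1}(\|\cdot\|_G)} \Phi(\signal{s})$. By Proposition \ref{prop.existence.equivalence.prob.sinr.box}, $\Omega = \{\signal{s}^\star\}$ is a singleton. Hence $\|\signal{s}_k - \signal{s}^\star\|_2 = d_2(\signal{s}_k,\Omega) \to 0$, which is (i).

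For (ii), I would argue by induction. The base case $\signal{s}_1 \in \real_{++}^N$ holds by assumption. At each step, $\tilde{\signal{s}}_k \in \real_+^N$ because it is a projection onto $[0,b]^N$. Formula \eqref{eq.grad.sumrate.sinr} then gives
\[
\signal{s}_{k+1} = \tilde{\signal{s}}_k + \mu_k \left(\frac{w_n}{1+\tilde{s}_{k,n}}\right)_{n\in\mathcal{N}}.
\]
Each added coordinate is strictly positive because $\signal{w} \in \real_{++}^N$ and $\mu_k > 0$. Therefore $\signal{s}_{k+1} \in \real_{++}^N$.

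For (iii), equation \eqref{eq.extention.rhodiagG} together with \eqref{eq.def.regionsinr} gives
\[
\Omega \subset [0,b]^N \cap \mathrm{lev}_{\le 1}(\|\cdot\|_G) \subset \real_+^N \cap \mathrm{lev}_{\le 1}(\|\cdot\|_G) = \mathcal{S}.
\]
Hence $d_2(\signal{s}_k, \mathcal{S}) \le d_2(\signal{s}_k, \Omega) \to 0$.

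For (iv), take $b$ large enough that Proposition \ref{prop.existence.equivalence.prob.sinr.box} applies. Then $\signal{s}^\star$ is the unique maximizer of $\sum_{n} w_n \log(1+s_n)$ over $\mathcal{S}$. By (ii), every $\signal{s}_k$ is positive, so the sum-rate expression is well defined along the sequence. The function $\signal{s} \mapsto \sum_{n} w_n \log(1+s_n)$ is continuous on $\real_+^N$, so (i) gives convergence of the sum-rates to its value at $\signal{s}^\star$, which is the maximum.

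I do not expect a real obstacle in this argument. The only delicate points are the restricted domain of Lipschitz continuity of $\nabla\Phi$, handled in (a2) above, and remembering that uniqueness of the solution is what turns distance convergence into convergence of the iterates.
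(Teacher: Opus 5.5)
Your proposal is correct and follows the paper's proof essentially step for step. You apply Fact~\ref{fact.hsdmsp} with $h=\|\cdot\|_G$, $\mathcal{K}=[0,b]^N$, $\Theta=\Phi$, and use uniqueness from Proposition~\ref{prop.existence.equivalence.prob.sinr.box} to upgrade distance convergence to convergence of the iterates; (ii) comes from the positive gradient step, and (iii)/(iv) from $\signal{s}^\star\in\mathcal{S}$ and continuity. Your remark that Lipschitz continuity of $\nabla\Phi$ is needed only on $\mathcal{K}\subset\real_+^N$ is exactly what the paper's appeal to Lemma~\ref{lemma.LipGradCost.sinr} relies on.
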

\begin{proof}
(i) To prove this claim via application of Fact \ref{fact.hsdmsp},
we first show that conditions (a1) and (a2) in Fact \ref{fact.hsdmsp} are satisfied for
$h = \|\cdot\|_G$, $\mathcal{K} = [0, b]^{N}$, and $\Theta = \Phi$.
Since $\|\cdot\|_G$ is a norm by Fact~\ref{fact.norm}, it is a convex function from $\real^{N}$ to $\real$.
For any $b > 0$, $[0, b]^{N}$ is a compact convex set,
and $[0, b]^{N} \cap \mathrm{lev}_{\le 1}(\|\cdot\|_G) \neq \varnothing$.
Thus, condition (a1) holds.
By Lemma \ref{lemma.LipGradCost.sinr}, $\Phi$ satisfies condition (a2).
Since the solution set of \eqref{eq:HSDMmaxsumrate.sinr}
is singleton by Proposition \ref{prop.existence.equivalence.prob.sinr.box},
Fact \ref{fact.hsdmsp} implies $\lim_{k \to \infty}\signal{s}_k = \signal{s}^{\star}$.

(ii) For every $k \in \mathbb{N}$, we have
$\tilde{\signal{s}}_{k} = P_{[0,b]^{N}}(\hat{\signal{s}}_{k}) \in \mathbb{R}_{+}^{N}$.
By assumption, we have $$(\forall \signal{s} \in \mathbb{R}_{+}^{N})\quad -\nabla\Phi(\signal{s}) = \left(\frac{w_n}{1+s_n}\right)_{n \in \mathcal{N}} \in \mathbb{R}_{++}^{N}$$ and
$(\mu_k)_{k\in\mathbb{N}} \subset \real_{++}^{N}$.
Hence, the last step in \eqref{eq:HSDMmaxsumrate.sinr} ensures
$\signal{s}_{k+1} \in \real_{++}^{N}$ for every $k \in \mathbb{N}$.
Note that $\signal{s}_{1} \in \real_{++}^{N}$ holds by assumption.

(iii) Since $\mathcal{S} = \real_{+}^{N} \cap \mathrm{lev}_{\le 1}(\|\cdot\|_G)$ by
definition of $\mathcal{S}$ in \eqref{eq.def.regionsinr}
and \eqref{eq.extention.rhodiagG} in Fact~\ref{fact.norm},
this claim follows from the claim (i) and $$\signal{s}^{\star} \in [0, b]^{N} \cap \mathrm{lev}_{\le 1}(\|\cdot\|_G) \subset \mathcal{S}.$$

(iv)
If $b$ is large enough,
the solutions of Problems \eqref{eq.sumrate.sinr} and \eqref{eq.refsumrate.frsinr}
are the same by Proposition \ref{prop.existence.equivalence.prob.sinr.box}. Thus,
this claim follows from the claim in (i)
by continuity of $\Phi$ and Lemma \ref{lemma.LipGradCost.sinr}(i).
\end{proof}

\subsection{Efficient subgradient computation method}
In this subsection, by focusing on the mappings $G$ considered in Example \ref{ex.maxlinearG},
we present a low-complexity method
for finding a subgradient of $\|\cdot\|_G$
used in Algorithm \eqref{eq:HSDMmaxsumrate.sinr} for Problem \eqref{eq.refsumrate.frsinr},
where we additionally assume the following condition, which is stronger than Assumption \ref{asmp.convex.rhoMl.exp} (see Remark \ref{remark.optimality.rate}).

\begin{assumption}
\label{asmp.convex.rhoMl}
	In addition to the setting of Example \ref{ex.maxlinearG},
we assume that
$\varrho_{\signal{M}_l}\colon\real_{+}^{N}\to\real_{+}$
is convex on $\real_{++}^{N}$ for every $l \in \{1,\ldots,L\}$.
\end{assumption}

\begin{remark}
\label{remark.conv.sinr.maxlinG}
If Assumption \ref{asmp.convex.rhoMl} holds, then
$G$ given in Example \ref{ex.maxlinearG} satisfies Assumption \ref{asmp.norm_inducing} by Fact \ref{fact.maxrhoconvex}.
\end{remark}

Since $\signal{s}_{k} \in \real_{++}^{N}$ holds at every iteration $k \in \Natural$
by Theorem \ref{theo.converge.sinr}(ii), it is sufficient to find a subgradient of $\|\cdot\|_G$
at a point in $\real_{++}^{N}$. To this end, we present the following proposition.

\begin{algorithm}[t]
\SetCommentSty{textrm}
%\DontPrintSemicolon
\setstretch{1.2}
\small
%\LinesNumbered
\caption{for Problem \eqref{eq.sumrate.sinr} using Example \ref{ex.maxlinearG}}
\label{alg.sinr.Gmaxlin}
\KwIn{$\signal{M}_l \in\real_{++}^{N\times N}~(l=1,\ldots,L)$, $\bm{w} \in \real_{++}^N$, $\signal{s}_1 \in \mathbb{R}_{++}^{N}$, $(\mu_k)_{k\in\mathbb{N}} \subset \real_{++}^{N}$
satisfying $\lim_{k\to\infty}\mu_k = 0$ and $\sum_{k\in\mathbb{N}}\mu_k = \infty$, and large enough $b >0$.}
\For{$k = 1,2,\ldots$}{
	$\gamma_k = \max_{l \in \{1,\ldots,L\}}\rho(\mathrm{diag}(\signal{s}_k)\signal{M}_l)$\;
	Choose $l_{k}^{\star} \in \argmax_{l \in \{1,\ldots,L\}}\rho(\mathrm{diag}(\signal{s}_k)\signal{M}_l)$\;
	\If{$\gamma_k > 1$}{
		Find $\signal{\xi}_k \in \real_{++}^{N}$ s.t.
		$\mathrm{diag}(\signal{s}_k)\signal{M}_{l_{k}^{\star}}\signal{\xi}_k = 
		\gamma_k\signal{\xi}_k$\;
		Find $\signal{\eta}_k \in \real_{++}^{N}$ s.t.
		$\signal{\eta}_k^{\mathsf{T}}\mathrm{diag}(\signal{s}_k)\signal{M}_{l_{k}^{\star}} = 
		\gamma_k\signal{\eta}_k^{\mathsf{T}}$\;
		$\signal{g}_k = (\mathrm{diag}(\signal{\eta}_k)\signal{M}_{l_{k}^{\star}}\signal{\xi})/(\signal{\eta}_k^{\mathsf{T}}\signal{\xi}_k)$\;
		$\hat{\signal{s}}_{k} = \signal{s}_k - (\gamma_k-1)\signal{g}_k/\|\signal{g}_k\|_2^2$\;
	}\Else{$\hat{\signal{s}}_{k} = \signal{s}_k$\;}
	$\tilde{\signal{s}}_{k} = P_{[0,b]^{N}}(\hat{\signal{s}}_{k})$\tcp*{See \eqref{eq.def.projectionbox}}
	$\signal{s}_{k+1} = \tilde{\signal{s}}_{k} - \mu_{k}\nabla\Phi(\tilde{\signal{s}}_{k})$\tcp*{See \eqref{eq.grad.sumrate.sinr}}
}
\end{algorithm}

\begin{proposition}
\label{prop.subgrad.sinr.maxlinG}
Consider the setting of Example \ref{ex.maxlinearG}.
Fix $\signal{s} \in \real_{++}^{N}$ arbitrarily, and
let $$l^{\star} \in
\argmax_{l \in \{1,\ldots,L\}} \varrho_{\signal{M}_l}(\signal{s})
= \argmax_{l \in \{1,\ldots,L\}}\rho(\mathrm{diag}(\signal{s})  \signal{M}_l).$$
Then, under Assumption \ref{asmp.convex.rhoMl}, we have
\begin{align}
\label{eq:ex.subgrad.sinr}
\frac{\mathrm{diag}(\signal{\eta})\signal{M}_{l^{\star}}\bm{\xi}}{\bm{\eta}^{\mathsf{T}} \bm{\xi}} \in \partial \|\cdot\|_G (\signal{s}),
\end{align}
where $\signal{\xi} \in \real_{++}^{N}$ and $\signal{\zeta} \in \real_{++}^{N}$
are right and left eigenvectors of $\mathrm{diag}(\signal{s})\signal{M}_{l^{\star}} \in \real_{++}^{N \times N}$
corresponding to the spectral radius $\rho(\mathrm{diag}(\signal{s})\signal{M}_{l^{\star}}) > 0$.
\end{proposition}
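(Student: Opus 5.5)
The plan mirrors the proof of Proposition \ref{prop.subgrad.rate.maxlinG}, with the convex extension $h_{G,\mathfrak{E}}$ replaced by the gauge norms of Fact \ref{fact.maxrhoconvex}. First, by Remark \ref{remark.conv.sinr.maxlinG}, Assumption \ref{asmp.convex.rhoMl} implies Assumption \ref{asmp.norm_inducing} for $G$, so $\|\cdot\|_G$ is a well-defined norm on $\real^N$. By Fact \ref{fact.maxrhoconvex}, each $\|\cdot\|_{G_l}$ is a norm, hence a real-valued convex function on $\real^N$. Moreover, \eqref{eq.max.Gnorm} gives $\|\cdot\|_G = \max_l \|\cdot\|_{G_l}$ on all of $\real^N$.

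Fix $\signal{s}\in\real_{++}^N$ and $l^\star$ as in the statement. By \eqref{eq.normG.specradlinar}, $\|\signal{s}\|_{G_l} = \varrho_{\signal{M}_l}(\signal{s})$ for every $l$. Hence $l^\star \in \argmax_l \|\signal{s}\|_{G_l}$, and Fact \ref{fact.subdifmaxconv} yields $\partial\|\cdot\|_{G_{l^\star}}(\signal{s}) \subset \partial\|\cdot\|_G(\signal{s})$. It therefore suffices to show that the left-hand side of \eqref{eq:ex.subgrad.sinr} lies in $\partial\|\cdot\|_{G_{l^\star}}(\signal{s})$.

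For this, note that $\real_{++}^N$ is open and $\|\cdot\|_{G_{l^\star}}$ coincides with $\varrho_{\signal{M}_{l^\star}}$ on a neighborhood of $\signal{s}$ contained in $\real_{++}^N$. Since $\signal{M}_{l^\star}\in\real_{++}^{N\times N}$, Fact \ref{fact.diff.rhodiagposM} shows that $\varrho_{\signal{M}_{l^\star}}$ is Fr\'echet differentiable at $\signal{s}$ with gradient $\mathrm{diag}(\signal{\eta})\signal{M}_{l^\star}\signal{\xi}/(\signal{\eta}^{\mathsf T}\signal{\xi})$. The local coincidence transfers this differentiability to the convex function $\|\cdot\|_{G_{l^\star}}$, with the same gradient. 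By \cite[Proposition~17.31]{baus17}, the gradient of a convex function that is differentiable at a point is its (unique) subgradient there, which gives \eqref{eq:ex.subgrad.sinr}.

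The only delicate step is the local-coincidence argument. One has to make sure the equality \eqref{eq.normG.specradlinar} holds on an entire open neighborhood of $\signal{s}$, not merely at $\signal{s}$, so that differentiability genuinely passes to the norm. This is immediate here because \eqref{eq.normG.specradlinar} holds on all of $\real_+^N \supset \real_{++}^N$. I would also read $\signal{\zeta}$ in the statement as the left eigenvector $\signal{\eta}$, as in Proposition \ref{prop.subgrad.rate.maxlinG}.
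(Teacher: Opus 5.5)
Your proposal is correct and follows the paper's proof essentially step for step. You reduce to $\partial\|\cdot\|_{G_{l^\star}}(\signal{s})$ via \eqref{eq.max.Gnorm} and Fact~\ref{fact.subdifmaxconv}, then combine Fact~\ref{fact.diff.rhodiagposM}, the coincidence \eqref{eq.normG.specradlinar} on the open set $\real_{++}^N$, and the fact that the gradient of a convex function differentiable at a point is its unique subgradient there. Your explicit check that $l^\star$ is an active index of $\max_l \|\signal{s}\|_{G_l}$ is a detail the paper leaves implicit, and reading $\signal{\zeta}$ as $\signal{\eta}$ is the intended interpretation.
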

\begin{proof}
Owing to the equality shown in \eqref{eq.max.Gnorm},
$\signal{g} \in \partial \|\cdot\|_{G_{l^\star}}(\signal{s})$ implies
$\signal{g} \in \partial \|\cdot\|_{G}(\signal{s})$ by Fact \ref{fact.subdifmaxconv} in Appendix \ref{appendix.proof.subgradient}.
Hence it is sufficient to show that the left-hand side of \eqref{eq:ex.subgrad.sinr}
is a subgradient of $\|\cdot\|_{G_{l^\star}}$ at $\signal{s}$.
Since $\signal{M}_{l^\star} \in \real_{++}^{N \times N}$ holds in the setting of Example \ref{ex.maxlinearG},
by Fact \ref{fact.diff.rhodiagposM},
$\varrho_{\signal{M}_{l^\star}}$ is differentiable on $\real_{++}^{N}$,
and its gradient at $\signal{s} \in \real_{++}^{N}$ is given by the left-hand side of \eqref{eq:ex.subgrad.sinr}.
Since the equality in \eqref{eq.normG.specradlinar} holds in
a neighborhood of $\signal{s} \in \real_{++}^{N}$,
the left-hand side of \eqref{eq:ex.subgrad.sinr} is also the gradient of $\|\cdot\|_{G_{l^\star}}$ at $\signal{s}$, and hence is a (unique) subgradient of $\|\cdot\|_{G_{l^\star}}$ 
at $\signal{s}$.
\end{proof}

Algorithm \ref{alg.sinr.Gmaxlin} is a particular instance of \eqref{eq:HSDMmaxsumrate.sinr} using the subgradient characterized in Proposition \ref{prop.subgrad.sinr.maxlinG} under Assumption \ref{asmp.convex.rhoMl}.
Since Assumption \ref{asmp.convex.rhoMl} implies Assumption \ref{asmp.norm_inducing},
Theorem \ref{theo.converge.sinr} applies to Algorithm \ref{alg.sinr.Gmaxlin}
under Assumption \ref{asmp.convex.rhoMl}.
\end{document}